\newcommand{\tabincell}[2]{\begin{tabular}{@{}#1@{}}#2\end{tabular}}
\newtheorem{theorem}{Theorem}
\newtheorem{remark}{Remark}
\newtheorem{proposition}{Proposition}
\newenvironment{varalgorithm}[1]
{\algorithm[H]\renewcommand{\thealgorithm}{#1}}
{\endalgorithm}
\begin{document}
\title{\bf \Large{Efficient algorithms for multivariate shape-constrained convex regression problems }\footnotemark[1]}
\author{Meixia Lin\footnotemark[2], \quad Defeng Sun\footnotemark[3], \quad Kim-Chuan Toh\footnotemark[4]}

\date{January 07, 2020}
\maketitle

\renewcommand{\thefootnote}{\fnsymbol{footnote}}
\footnotetext[1]{{\bf Funding}: Defeng Sun is supported in part by Hong Kong Research Grant Council grantPolyU153014/18p and Kim-Chuan Toh by the Academic Research Fund (grant R-146-000-257-112) of the Ministry of Education, Singapore.}
\footnotetext[2]{Department of Mathematics, National University of Singapore, 10 Lower Kent Ridge Road, Singapore ({\tt lin\_meixia@u.nus.edu}).}
\footnotetext[3]{Department of Applied Mathematics, The Hong Kong Polytechnic University, Hung Hom, Hong Kong ({\tt defeng.sun@polyu.edu.hk}).}
\footnotetext[4]{Department of Mathematics and Institute of Operations Research and Analytics, National University of Singapore, 10 Lower Kent Ridge Road, Singapore ({\tt mattohkc@nus.edu.sg}).}
\renewcommand{\thefootnote}{\arabic{footnote}}

\begin{abstract}
	Shape-constrained convex regression problem deals with fitting a convex function to the observed data, where additional constraints are imposed, such as component-wise monotonicity and uniform Lipschitz continuity. This paper provides a comprehensive mechanism for computing the least squares estimator of a multivariate shape-constrained convex regression function in $\mathbb{R}^d$. We prove that the least squares estimator is computable via solving a constrained convex quadratic programming (QP) problem with $(n+1)d$ variables and at least $n(n-1)$ linear inequality constraints, where $n$ is the number of data points. For solving the generally very large-scale convex QP, we design two efficient algorithms, one is the symmetric Gauss-Seidel based alternating direction method of multipliers ({\tt sGS-ADMM}), and the other is the proximal augmented Lagrangian method ({\tt pALM}) with the subproblems solved by the semismooth Newton method ({\tt SSN}). Comprehensive numerical experiments, including those in the pricing of basket options and estimation of production functions in economics, demonstrate that both of our proposed algorithms outperform the state-of-the-art algorithm. The {\tt pALM} is more efficient than the {\tt sGS-ADMM} but the latter has the advantage of being simpler to implement.
\end{abstract}

\medskip
\noindent
{\bf Keywords:} convex regression, shape constraints, preconditioned proximal point algorithm, symmetric Gauss-Seidel based ADMM
\\[5pt]
{\bf AMS subject classification:} 90C06, 90C25, 90C90

\section{Introduction}\label{sec:introduction}
Convex (or concave) regression is meant to estimate a convex (or concave) function based on a finite number of observations. It is a topic of interest in many fields such as economics, operations research and financial engineering. In economics, production functions \cite{hildreth1954point,varian1984nonparametric,allon2007nonparametric}, demand functions \cite{varian1982nonparametric} and utility functions \cite{meyer1968consistent} are often required to be concave. In operations research, the performance measure expectations can be proved to be convex in the underlying model parameters, e.g. in the context of queueing network \cite{chen2013fundamentals}. In financial engineering, the option pricing function has the convexity restriction under the no-arbitrage condition, as can be seen from \cite{ait2003nonparametric}.

In the literature, there exists various methods for solving the convex regression problem. With the specification of a functional form, one can apply a parametric approach to estimate the convex function. For example, the Cobb-Douglas production function is a particular functional form of the production function that is widely used in applied production economics. To avoid strong prior assumptions on the functional form, one can also use a non-parametric approach to perform the function estimation. Generally, the nonparametric estimation is based on a given collection of primitive functions, such as local polynomial \cite{longstaff2001valuing}, trigonometric series, spline estimator \cite{dontchev2003quadratic,qi2007regularity} and kernel-type estimator \cite{aybat2014parallel}. However, such an approach may face some difficulties such as imposing the convexity constraint and choosing appropriate smoothing parameters (e.g. the degree of the polynomial, or the kernel density bandwidth). To overcome these difficulties, we use the least squares estimator for the convex regression. The least squares estimator is first proposed in \cite{hildreth1954point}, and its theoretical properties are carefully studied in \cite{hanson1976consistency,seijo2011nonparametric,lim2012consistency}. 

Suppose that we observe $n$ data points $\{(X_i,Y_i)\}_{i=1}^n$, which satisfy the regression model $Y = \psi(X)+\varepsilon$ with an unknown convex function $\psi:\mathbb{R}^d \rightarrow \mathbb{R}$. The least squares estimation method is to estimate the function $\psi$ by minimizing the sum of squares error $\sum_{i=1}^n(\psi(X_i)-Y_i)^2$ over the set of convex functions from $\mathbb{R}^d$ to $\mathbb{R}$. This infinite dimensional model appears to be intractable. Fortunately, the authors in \cite{kuosmanen2008representation,seijo2011nonparametric} have provided a computationally tractable optimal solution to it. They showed that in the convex regression problem, the family of convex functions can be characterized by a subset of continuous, piecewise linear functions $\theta_i + \langle \xi_i,X-X_i\rangle$, $i=1,\ldots,n$, whose intercepts $\theta_i$'s and gradient vectors $\xi_i$'s are restricted to satisfy the convexity conditions. The resulting problem is a convex quadratic programming (QP) problem with $(n+1)d$ variables and $n(n-1)$ linear inequality constraints, which can be solved by interior point solvers such as those implemented in {\tt MOSEK} when $n$ is not too large, as stated in \cite{seijo2011nonparametric}. However, interior point solvers may 
easily run out of memory when $n$ is large due to the presence of a large number of at least $n(n-1)$ linear inequality constraints. Aybat et al. \cite{aybat2014parallel} proposed a parallel proximal gradient method ({\tt PAPG}) to solve the dual of an approximation of the QP by adding a ridge regularization on the $\xi_i$'s. The {\tt PAPG} method however is not fast enough for solving large problems. For example, it needs $17$ minutes to solve a problem with $d=80$, $n=1600$ on a $16$-core machine sharing $32$ GB. Mazumder et al. \cite{mazumder2019computational} proposed a three-block alternating direction method of multipliers ({\tt ADMM}) for solving the QP, but it has no convergence guarantee. The computational challenge of handling large-scale cases still remains in need of more progress, especially for the case when $d$ and $n$ are relatively large, for which existing methods are too expensive even for computing a solution with low accuracy.

In many real applications, one may need to impose more shape constraints on the convex function $\psi$, such as component-wise monotonicity and uniform Lipschitz continuity. For example, the option pricing function under the no-arbitrage condition needs to be non-decreasing as well as convex as described in \cite{ait2003nonparametric}. In addition, when dealing with the Lipschitz convex regression as in \cite{lim2014convergence,balazs2015near,mazumder2019computational}, the uniform Lipschitz property of the convex function is added when performing the estimation. To deal with these cases, we minimize the sum of squares error $\sum_{i=1}^n(\psi(X_i)-Y_i)^2$ over the set of convex functions satisfying additional shape constraints. The addition of the shape constraints obviously would make the QP even more complicated and difficult to solve.

In this paper, we provide a comprehensive mechanism for computing the least squares estimator for the shape-constrained convex regression problem. We first prove 
that the minimal sum of squares error can be achieved via a set of piecewise linear functions whose intercept and gradient vectors are constrained to satisfy the convexity conditions and required shape constraints (see Theorem \ref{thm:relation_two_fun}). This conclusion leads us to a constrained QP with $(n+1)d$ variables, $n(n-1)$ linear inequality constraints and $n$ probably non-polyhedral inequality constraints. Note that the estimator obtained in this way is nonsmooth, one can apply the Moreau proximal smoothing technique to obtain a smoothing approximation. In addition, we can use a generalized form of the proposed constrained QP model as well as a data-driven Lipschitz estimation method to handle the boundary effect of the least squares estimator for convex functions. The main task in this mechanism is to solve the constrained QP in a robust and efficient manner. Most existing methods for the QP in the standard convex regression are either not extendable or difficult to be modified to solve the constrained QP due to the additional shape constraints. For the multivariate shape-constrained convex regression problem, even with only a moderate number of observations, say $n=1000$, the memory cost and computational cost are already massive since the underlying QP has about a million constraints. To tackle the potentially very large-scale QPs, we design two algorithms that can fully exploit the underlying structures of the convex QPs of interest. The first algorithm is the symmetric Gauss-Seidel based alternating direction method of multipliers ({\tt sGS-ADMM}), which has been demonstrated to perform better than the possibly nonconvergent directly extended multi-block {\tt ADMM} \cite{chen2017efficient}. The {\tt sGS-ADMM} algorithm is easy to implement, but it is still just a first-order method, which may not be efficient enough to solve a problem to high accuracy. We also design a proximal augmented Lagrangian method ({\tt pALM}) for solving the constrained QP, which is proved to be superlinearly convergent. For the {\tt pALM} subproblems, we solve them by the semismooth Newton method ({\tt SSN}), which is proved to have quadratic convergence. Moreover, we fully uncover and exploit the second order sparsity structure of the problem to highly reduce the computational cost of solving the Newton systems. Comprehensive numerical experiments, including those in the pricing of basket options and estimation of production functions, demonstrate that both of our proposed algorithms outperform the state-of-the-art algorithm.

\section{Model with shape constraints}\label{sec:model}

Given independent observations $\{(X_i,Y_i)\}_{i=1}^n$, where the predictors $X_i\in\mathbb{R}^d$ and the responses $Y_i\in \mathbb{R}$, we aim to fit a convex regression model of the form $Y = \psi(X)+\varepsilon$. In the model, $\psi:\Omega \rightarrow \mathbb{R}$ is an unknown convex function, $\Omega\subset \mathbb{R}^d$ is a $\delta$-neighborhood of ${\rm conv}(X_1,\cdots,X_n)$ (the convex hull of $\{X_i\}_{i=1}^n$), $\varepsilon$ is a random variable with expectation $\mathbb{E}[\varepsilon\vert X]=0$. The least squares estimator $\hat{\psi}$ of $\psi$ is defined as
\begin{align}\label{min_function}
\hat{\psi}\in \underset{\psi\in\mathcal{C}}{\arg\min}\sum_{i=1}^n(\psi(X_i)-Y_i)^2,\quad \mathcal{C}=\{ \psi:\Omega \rightarrow \mathbb{R}\mid \psi\mbox{ is a convex function} \}.
\end{align}
The authors in \cite{kuosmanen2008representation,seijo2011nonparametric} provided a computationally tractable optimal solution to the above infinite dimensional model. Specifically, once an optimal solution $\{(\hat{\theta}_i,\hat{\xi}_i)\}_{i=1}^n$ to the following finite dimensional problem
\begin{align*}
\min_{\theta_1,\ldots,\theta_n\in\mathbb{R};  \xi_1,\ldots,\xi_n \in\mathbb{R}^{d}} 
\Big\{ \frac{1}{2} \sum_{i=1}^n (\theta_i - Y_i)^2 \Bigm\lvert  \theta_i \geq \theta_j + \langle \xi_j,X_i - X_j\rangle,\  1\leq i ,j \leq n \Big\}
\end{align*}
has been computed, one can construct an optimal solution $\hat{\psi}$ to \eqref{min_function} by taking
\begin{align} 
\hat{\psi}(x) = \max_{1\leq j\leq n} \Big\{ \hat{\theta}_j + \langle \hat{\xi}_j,x- X_j\rangle 
\Big\}, \quad  x\in \Omega.\label{extension}
\end{align}

For the shape-constrained convex regression problem, the least squares estimator $\hat{\psi}$ is defined as
\begin{align}\label{eq:min_CS}
\hat{\psi}\in \underset{\psi\in\mathcal{C}_\mathcal{S} }{\arg\min}\sum_{i=1}^n(\psi(X_i)-Y_i)^2,\quad \mathcal{C}_\mathcal{S}=\{\psi:\Omega \rightarrow \mathbb{R}\mid \psi\mbox{ is a convex function with Property }\mathcal{S} \},
\end{align}
where Property $\mathcal{S}$ specifies the shape constraint of $\psi$. We restrict ourselves to the case that Property $\mathcal{S}$ takes one of the following forms:
\begin{enumerate}[label=(S\arabic*)]
	\item (monotone constraint) $\psi$ is non-decreasing in some of the coordinates (denoted as $K_1$) and non-increasing in some others (denoted as $K_2$), where $K_1$ and $K_2$ are disjoint subsets of $\{1,\cdots,d\}$;
	\item (box constraint) the elements in $\partial \psi(x)$ for any $x\in \Omega$ are bounded by two given vectors $L,U\in\mathbb{R}^d$;
	\item (Lipschitz constraint) $\psi$ is Lipschitz, i.e., $|\psi(x)-\psi(y)|\leq L \|x-y\|_p$ for any $x,y\in \Omega$, where $p=1,2,\infty$, and $L$ is a given positive constant.
\end{enumerate}

\paragraph{Structure of the paper.} In the remaining part of this paper, we provide the mechanism for estimating the multivariate shape-constrained convex function in Section \ref{sec:mechanism}. For solving the involved constrained QP, the {\tt sGS-ADMM} algorithm is presented in Section \ref{sec:sGSADMM} and the {\tt pALM} algorithm is described in Section \ref{sec:pALM}. The implementation details of the proposed algorithms can be found in Section \ref{sec:proximalmapping}. Section \ref{sec:numerical} provides the numerical comparison among {\tt MOSEK}, {\tt sGS-ADMM} and {\tt pALM}, which demonstrates the robustness and efficiency of {\tt pALM}. Then we apply our mechanism to perform the function estimation in several interesting real applications in Section \ref{sec:realapplication}. Finally, we conclude the paper and discuss some future work.

\paragraph{Notation.} Denote $X=(X_1,\cdots,X_n)\in \mathbb{R}^{d\times n}$ and $e_n=(1,\cdots,1)^T\in \mathbb{R}^n$. For any matrix $Z\in \mathbb{R}^{m\times n}$, $Z_i$ denotes the $i$-th column of $Z$. We use ``${\rm Diag}(z)$" to denote the diagonal matrix whose diagonal is given by the vector $z$. For any positive semidefinite matrix $H\in \mathbb{R}^{n\times n}$, we define $\langle x,x'\rangle_{H}:=\langle x,H x'\rangle$, and $\|x\|_{H}:=\sqrt{\langle x,x\rangle_{H}}$ for all $x,x'\in \mathbb{R}^n$. For a given closed subset $C$ of $\mathbb{R}^n$ and $x\in\mathbb{R}^n$, we define ${\rm dist}_{H}(x,C)=\min\{\|x-y\|_H\mid y\in C\}$. The largest (smallest) eigenvalue of $H$ is denoted as $\lambda_{\max}(H)$ ($\lambda_{\min}(H)$). Given $x\in \mathbb{R}^n$ and an index set $K\subset \{1,\cdots,n\}$, $x_K$ denotes the sub-vector of $x$ with those elements not in $K$ being removed. For a closed proper convex function $q:\mathbb{R}^n\rightarrow (-\infty,\infty]$, the conjugate of $q$ is $q^*(z):=\sup_{x\in\mathbb{R}^n}\{\langle x,z\rangle-q(x)\}$. The Moreau envelope of $q$ at $x$ is defined by
\[
{\rm E}_q(x):=\min_{y\in \mathbb{R}^n}\Big\{ q(y)+\frac{1}{2}\|y-x\|^2\Big\},
\]
and the associated proximal mapping ${\rm Prox}_q(x)$ is
defined as the unique solution of the above minimization problem. As proved in \cite{moreau1965proximite}, ${\rm E}_q(\cdot)$ is finite-valued, convex and differentiable with $\nabla {\rm E}_q(x)=x-{\rm Prox}_q(x)$. In addition, we can see from \cite{nocedal2006numerical,rockafellar1976monotone} that ${\rm Prox}_q(x)$ is Lipschitz continuous with modulus $1$.

\section{A mechanism for estimating the multivariate shape-constrained convex function}\label{sec:mechanism}
In this section, we provide a comprehensive mechanism for computing the least squares estimator for the multivariate shape-constrained convex function defined in \eqref{eq:min_CS}. Before describing the process, we first characterize Property $\mathcal{S}$ in the following proposition. For brevity, we omit the proof.
\begin{proposition}\label{prop:defD}
	A convex function $\psi$ has Property $\mathcal{S}$ if and only if for any $x\in \mathbb{R}^d$, the subdifferential of $\psi$ satisfies $\partial \psi(x)\subset \mathcal{D}$, where $\mathcal{D}$ is defined corresponding to Property $\mathcal{S}$ as follows:
	\begin{enumerate}[label=(S\arabic*)]
		\item (monotone constraint) $\mathcal{D}=\{x\in \mathbb{R}^d\mid x_{K_1}\geq 0,x_{K_2}\leq 0\}$,
		\item (box constraint) $\mathcal{D}=\{x\in \mathbb{R}^d\mid L\leq x\leq U\}$,
		\item (Lipschitz constraint) $\mathcal{D}=\{x\in \mathbb{R}^d\mid \|x\|_q\leq L\}$, where $q$ satisfies $1/p+1/q=1$.
	\end{enumerate}
\end{proposition}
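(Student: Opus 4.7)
The plan is to handle the three subcases separately, but all through the classical identification $\psi'(x;v)=\sup_{g\in\partial\psi(x)}\langle g,v\rangle$ that holds for any finite convex function on an open set, together with the elementary subgradient inequality $\psi(y)\geq\psi(x)+\langle g,y-x\rangle$ for $g\in\partial\psi(x)$. Once these tools are in hand, each case reduces to a short chain of implications.

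Case (S2) is, once Property $\mathcal{S}$ is unpacked, nothing more than the statement itself: asserting that every element of $\partial\psi(x)$ lies componentwise between $L$ and $U$ is literally the definition of $\partial\psi(x)\subset\mathcal{D}$. For case (S1), I would show separately that non-decreasingness in coordinate $k\in K_1$ is equivalent to $g_k\geq 0$ for every $g\in\partial\psi(x)$ and every $x$, and symmetrically for $K_2$. The $(\Leftarrow)$ direction is immediate: if $g_k\geq 0$ for all subgradients, then for $t>0$ the subgradient inequality at $x$ gives $\psi(x+te_k)\geq\psi(x)+tg_k\geq\psi(x)$. For $(\Rightarrow)$, fix $x$, pick $g\in\partial\psi(x)$, and evaluate the subgradient inequality at $y=x-te_k$ for $t>0$: this gives $\psi(x-te_k)\geq\psi(x)-tg_k$, and combining with the assumed monotonicity $\psi(x-te_k)\leq\psi(x)$ forces $g_k\geq 0$.

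For case (S3), I would invoke the standard equivalence that a convex function is $L$-Lipschitz with respect to $\|\cdot\|_p$ iff every subgradient satisfies $\|g\|_q\leq L$, where $q$ is the conjugate exponent. The $(\Rightarrow)$ direction follows from the subgradient inequality: $\langle g,v\rangle\leq\psi(x+v)-\psi(x)\leq L\|v\|_p$, so taking the supremum over $\|v\|_p\leq 1$ yields $\|g\|_q\leq L$ by the definition of the dual norm. For $(\Leftarrow)$, I would integrate the directional derivative along the segment from $x$ to $y$, writing $\psi(y)-\psi(x)=\int_0^1\psi'(x+t(y-x);y-x)\,dt$, and bound each integrand via H\"older's inequality: $\psi'(z;y-x)=\sup_g\langle g,y-x\rangle\leq L\|y-x\|_p$.

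The main routine subtlety to watch is that all of these arguments implicitly use that the subdifferential is nonempty and that directional derivatives exist as finite numbers, so one must work at interior points of $\Omega$; this is costless because $\Omega$ is introduced as a $\delta$-neighborhood of a convex hull and is therefore open and convex. The step most worth writing out carefully is the $(\Leftarrow)$ direction of (S3), where one needs the integral representation of $\psi(y)-\psi(x)$ and the fact that the segment from $x$ to $y$ lies in $\Omega$; both are standard and can simply be cited from a convex analysis reference rather than re-derived.
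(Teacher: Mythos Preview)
The paper explicitly omits the proof of this proposition (``For brevity, we omit the proof''), so there is no argument to compare against. Your proposal is correct and complete for all three cases.

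One small simplification: in the $(\Leftarrow)$ direction of (S3) you can avoid the integral representation entirely. Given $x,y\in\Omega$ and any $g_y\in\partial\psi(y)$, the subgradient inequality at $y$ gives $\psi(x)\geq\psi(y)+\langle g_y,x-y\rangle$, hence $\psi(y)-\psi(x)\leq\langle g_y,y-x\rangle\leq\|g_y\|_q\,\|y-x\|_p\leq L\|y-x\|_p$. Swapping the roles of $x$ and $y$ (using a subgradient at $x$) yields the reverse inequality, and together they give $|\psi(x)-\psi(y)|\leq L\|x-y\|_p$. This uses only nonemptiness of the subdifferential at interior points, which you already flagged, and sidesteps the need to justify the integral formula for $\psi(y)-\psi(x)$.
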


The least squares estimation problem \eqref{eq:min_CS} attempts to find a best-fitting function $\hat{\psi}$ from the function family $\mathcal{C}_\mathcal{S}$, which is infinite dimensional. Therefore, this problem is intractable in practice. In order to design a tractable approach, we establish the following representation theorem to \eqref{eq:min_CS}, which is motivated by \cite{kuosmanen2008representation}. 

\begin{theorem}\label{thm:relation_two_fun}
	Define the set of piecewise linear functions as
	\begin{align}\label{eq:KS}
	\mathcal{K}_\mathcal{S}:=\Big\{\phi:\Omega \rightarrow \mathbb{R}\Bigm\lvert \phi(x)=\max_{1\leq j\leq n}\{  \theta_j+\langle \xi_j,x-X_j\rangle	\},\
	(\theta_1,\cdots,\theta_n,\xi_1,\cdots,\xi_n)\in \mathcal{F}_\mathcal{S}
	\Big\},
	\end{align}
	where 
	\begin{align}\label{eq:FS}
	\mathcal{F}_\mathcal{S}:=\{ (\theta_1,\cdots,\theta_n,\xi_1,\cdots,\xi_n)\mid
	\theta_i\in \mathbb{R},\xi_i\in \mathcal{D},i=1,\cdots,n,\ \theta_i \geq \theta_j + \langle \xi_j,X_i - X_j\rangle,1\leq i ,j \leq n
	\},
	\end{align}
	and $\mathcal{D}$ is defined as in Proposition \ref{prop:defD}. Consider the problem
	\begin{align}\label{min_KS}
	\min_{\phi\in \mathcal{K}_\mathcal{S}}\sum_{i=1}^n(\phi(X_i)-Y_i)^2.
	\end{align}
	Then the following equality holds:
	\begin{align}\label{eq:twomin}
	\min_{\psi\in\mathcal{C}_\mathcal{S}}
	\sum_{i=1}^n(\psi(X_i)-Y_i)^2=\min_{\phi\in\mathcal{K}_\mathcal{S}}
	\sum_{i=1}^n(\phi(X_i)-Y_i)^2.
	\end{align}
	Moreover, any solution $\hat{\phi}$ to \eqref{min_KS}
	is a solution to the problem \eqref{eq:min_CS}.
\end{theorem}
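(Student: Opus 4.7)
The plan is to establish the equality \eqref{eq:twomin} by proving the two inequalities separately, via one inclusion of feasible sets and one explicit construction. Concretely, I would first show that $\mathcal{K}_\mathcal{S}\subset \mathcal{C}_\mathcal{S}$, which immediately gives one direction of \eqref{eq:twomin} since the objective $\sum_{i=1}^n(\psi(X_i)-Y_i)^2$ is the same in both problems; then I would show the other direction by taking any $\psi\in\mathcal{C}_\mathcal{S}$ and constructing a $\phi\in\mathcal{K}_\mathcal{S}$ that interpolates $\psi$ at all data points $X_1,\dots,X_n$. The ``moreover'' statement then falls out as a corollary.

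For the inclusion $\mathcal{K}_\mathcal{S}\subset\mathcal{C}_\mathcal{S}$, any $\phi\in\mathcal{K}_\mathcal{S}$ is a pointwise maximum of affine functions and is therefore convex. Since the subdifferential of a max of affine functions at $x$ is the convex hull of those gradients $\xi_j$ for which the $j$-th affine piece attains the maximum, and since $\mathcal{D}$ is convex by inspection of each case in Proposition \ref{prop:defD} with every $\xi_j\in \mathcal{D}$, we get $\partial \phi(x)\subset\mathcal{D}$ for all $x\in\Omega$. Proposition \ref{prop:defD} then ensures $\phi\in\mathcal{C}_\mathcal{S}$, so the ``$\leq$'' half of \eqref{eq:twomin} (minimum over $\mathcal{C}_\mathcal{S}$ is no larger than over $\mathcal{K}_\mathcal{S}$) is immediate.

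The harder direction — and what I expect to be the main obstacle — is the reverse inequality, because we need to exhibit, for a given $\psi\in\mathcal{C}_\mathcal{S}$, a concrete element of $\mathcal{K}_\mathcal{S}$ producing the same sum of squared residuals. My construction would set $\theta_i:=\psi(X_i)$ and pick any $\xi_i\in\partial\psi(X_i)$ (nonempty because $X_i$ lies in the interior of the domain of the convex function $\psi$). Property $\mathcal{S}$ combined with Proposition \ref{prop:defD} forces each $\xi_i\in\mathcal{D}$, and the convexity inequality $\psi(X_i)\geq \psi(X_j)+\langle \xi_j,X_i-X_j\rangle$ yields exactly $\theta_i\geq \theta_j+\langle \xi_j,X_i-X_j\rangle$, so $(\theta_1,\dots,\theta_n,\xi_1,\dots,\xi_n)\in\mathcal{F}_\mathcal{S}$. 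The key verification is then that the induced $\phi(x):=\max_{1\leq j\leq n}\{\theta_j+\langle \xi_j,x-X_j\rangle\}$ satisfies $\phi(X_i)=\theta_i=\psi(X_i)$ for every $i$: the $\geq$ direction is obtained by taking $j=i$ in the max, and the $\leq$ direction follows directly from the constraints defining $\mathcal{F}_\mathcal{S}$. Consequently $\sum_{i=1}^n(\phi(X_i)-Y_i)^2=\sum_{i=1}^n(\psi(X_i)-Y_i)^2$, completing the other inequality in \eqref{eq:twomin}.

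Finally, for the ``moreover'' assertion, note that any $\hat\phi$ solving \eqref{min_KS} belongs to $\mathcal{C}_\mathcal{S}$ by the first inclusion, and its objective value equals the optimal value of \eqref{eq:min_CS} by \eqref{eq:twomin}; hence $\hat\phi$ is itself a minimizer over $\mathcal{C}_\mathcal{S}$, i.e., a solution to \eqref{eq:min_CS}.
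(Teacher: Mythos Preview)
Your proposal is correct and follows essentially the same approach as the paper: the inclusion $\mathcal{K}_\mathcal{S}\subset\mathcal{C}_\mathcal{S}$ via the subdifferential description of a max of affine functions, and the reverse inequality via the construction $\theta_i=\psi(X_i)$, $\xi_i\in\partial\psi(X_i)$. The only cosmetic difference is that the paper carries out the second step with an $\varepsilon$-approximate minimizer $\hat\psi_\varepsilon\in\mathcal{C}_\mathcal{S}$ and lets $\varepsilon\downarrow 0$, whereas you argue directly for an arbitrary $\psi\in\mathcal{C}_\mathcal{S}$; your version is a slight streamlining but not a different route.
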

\begin{proof}
We first prove that $\mathcal{K}_\mathcal{S}\subset \mathcal{C}_\mathcal{S}$, that is, the functions in $\mathcal{K}_\mathcal{S}$ are convex functions with Property $\mathcal{S}$. Convexity comes from the fact that any pointwise maximum function is convex. Given $\phi\in \mathcal{K}_\mathcal{S}$ determined by $(\theta_1,\cdots,\theta_n,\xi_1,\cdots,\xi_n)\in \mathcal{F}_\mathcal{S}$, the subdifferential of the piecewise linear function $\phi$ is a polyhedron according to \cite[Theorem 25.6]{rockafellar1970convex}, and it is given by
\begin{align*}
\partial \phi(x)={\rm conv}\{\xi_i\mid i\in I(x)\},\quad I(x):=\{i\mid \theta_i+\langle \xi_i,x-X_i\rangle=\phi(x) \}.
\end{align*}
By the definition of $\mathcal{D}$ and $\mathcal{F}_\mathcal{S}$, we can see that $\partial \phi(x)\subset\mathcal{D}$ for any $x\in \Omega$. According to Proposition \ref{prop:defD}, the convex function $\phi$ has Property $\mathcal{S}$, which means $\phi\in\mathcal{C}_\mathcal{S}$. Therefore, we have that
\begin{align*}
\min_{\psi\in\mathcal{C}_\mathcal{S}}
\sum_{i=1}^n(\psi(X_i)-Y_i)^2\leq\min_{\phi\in\mathcal{K}_\mathcal{S}}
\sum_{i=1}^n(\phi(X_i)-Y_i)^2.
\end{align*}

Next we prove the reverse inequality. For any $\varepsilon>0$, there exists $\hat{\psi}_{\varepsilon}\in \mathcal{C}_\mathcal{S}$ such that 
\begin{align*}
\sum_{i=1}^n(\hat{\psi}_{\varepsilon}(X_i)-Y_i)^2\leq\min_{\psi\in\mathcal{C}_\mathcal{S}}
\sum_{i=1}^n(\psi(X_i)-Y_i)^2+\varepsilon.
\end{align*}
If we take $\hat{\xi}_{\varepsilon,i}\in \partial \hat{\psi}_{\varepsilon}(X_i)$, $i=1,\cdots,n$, then
\begin{align*}
(\hat{\psi}_{\varepsilon}(X_1),\cdots,\hat{\psi}_{\varepsilon}(X_n),\hat{\xi}_{\varepsilon,1},\cdots,\hat{\xi}_{\varepsilon,n})\in \mathcal{F}_\mathcal{S},\quad 
\hat{\phi}_{\varepsilon}(x):=\max_{1\leq j\leq n}\{  \hat{\psi}_{\varepsilon}(X_j)+\langle \hat{\xi}_{\varepsilon,j},x-X_j\rangle\}\in\mathcal{K}_\mathcal{S}.
\end{align*}
The inequalities $\hat{\psi}_{\varepsilon}(X_i) \geq \hat{\psi}_{\varepsilon}(X_j) + \langle \hat{\xi}_{\varepsilon,j},X_i - X_j\rangle$ for all $i,j$ implies that 
\begin{align*}
\hat{\phi}_{\varepsilon}(X_i) = \max_{1\leq j\leq n}\{  \hat{\psi}_{\varepsilon}(X_j)+\langle \hat{\xi}_{\varepsilon,j},X_i-X_j\rangle\}=\hat{\psi}_{\varepsilon}(X_i),\quad i = 1,\cdots,n.
\end{align*} 
Then, it holds that
\begin{align*}
\min_{\phi\in\mathcal{K}_\mathcal{S}}
\sum_{i=1}^n(\phi(X_i)-Y_i)^2\leq \sum_{i=1}^n(\hat{\phi}_{\varepsilon}(X_i)-Y_i)^2=\sum_{i=1}^n(\hat{\psi}_{\varepsilon}(X_i)-Y_i)^2\leq\min_{\psi\in\mathcal{C}_\mathcal{S}}
\sum_{i=1}^n(\psi(X_i)-Y_i)^2+\varepsilon.
\end{align*}
Since the above inequality holds for any $\varepsilon>0$, the equality \eqref{eq:twomin} follows. Now suppose $\hat{\phi}$ is an optimal solution to \eqref{min_KS}, since $\hat{\phi}\in \mathcal{C}_\mathcal{S}$, we have that $\hat{\phi}$ is a solution to the problem \eqref{eq:min_CS}.
\end{proof}

The theorem above provides a tractable approach to compute \eqref{eq:min_CS} through solving \eqref{min_KS}. 
By definition, any function $\phi$ in $\mathcal{K}_\mathcal{S}$, which is determined by $(\theta_1,\cdots,\theta_n,\xi_1,\cdots,\xi_n)\in \mathcal{F}_\mathcal{S}$, satisfies
\begin{align*}
\phi(X_i)=\max_{1\leq j\leq n}\{ \theta_j+\langle \xi_j,X_i-X_j\rangle\}=\theta_i,\quad i=1,\cdots,n.
\end{align*}
Therefore, we can conclude the mechanism for computing an optimal solution to \eqref{eq:min_CS} as follows.

\paragraph{The mechanism for shape-constrained convex regression.} Suppose $\{(\hat{\theta}_i,\hat{\xi}_i)\}_{i=1}^n$ is an optimal solution to
\begin{align}\label{convex_LSE_D}
\min_{\theta_1,\ldots,\theta_n\in\mathbb{R};  \xi_1,\ldots,\xi_n \in\mathbb{R}^{d}}  
\Big\{  \frac{1}{2}\|\theta-Y\|^2 \Bigm\lvert (\theta_1,\cdots,\theta_n,\xi_1,\cdots,\xi_n)\in \mathcal{F}_\mathcal{S}\Big\},
\end{align}
where the feasible set $\mathcal{F}_\mathcal{S}$ is defined as in \eqref{eq:FS}. We can construct an optimal solution to \eqref{eq:min_CS} by taking
\begin{align} 
\hat{\psi}(x) = \max_{1\leq j\leq n} \Big\{ \hat{\theta}_j + \langle \hat{\xi}_j,x- X_j\rangle 
\Big\}, \quad  x\in \Omega.\label{extension_shape}
\end{align}
As one can see, the main task in our mechanism for estimating the shape-constrained convex function is to solve the constrained convex quadratic programming problem \eqref{convex_LSE_D}. 

Define the mapping $\mathcal{A}:\mathbb{R}^n\rightarrow \mathbb{R}^{n\times n}$ as $\mathcal{A}z=ze_n^T-e_nz^T$. Then it holds that $\mathcal{A}^*Z = (Z-Z^T)e_n$ for $Z\in \mathbb{R}^{n\times n}$, and $\mathcal{A}^*\mathcal{A} = 2nI_n-2e_ne_n^T$. Denote $\xi=(\xi_1;\cdots;\xi_n)\in \mathbb{R}^{dn}$, and $B_i=e_nX_i^T-X^T\in \mathbb{R}^{n\times d}$, then define $\mathcal{B}:\mathbb{R}^{dn}\rightarrow \mathbb{R}^{n\times n}$ as $\mathcal{B}\xi=(B_1\xi_1,\cdots,B_n\xi_n)$ for $\xi\in \mathbb{R}^{dn}$. Therefore, we have that $\mathcal{B}^* Z=(Z_1^T B_1,\cdots,Z_n^T B_n)^T$, and $\mathcal{B}^*\mathcal{B}$ is a block diagonal matrix in $\mathbb{R}^{dn\times dn}$ whose $i$-th block is $B_i^TB_i$. Based on these notations, the problem \eqref{convex_LSE_D} can equivalently be written as
\begin{align}
\min_{\theta\in \mathbb{R}^n,\xi\in \mathbb{R}^{dn}} \Big\{\frac{1}{2}\|\theta-Y\|^2+p(\xi)+\delta_{+}(\mathcal{A}\theta+\mathcal{B}\xi)\Big\},\tag{P}\label{reformulated_problem}
\end{align}
where $p(\xi)= \sum_{i=1}^n \delta_\mathcal{D}(\xi_i)$ and $\delta_{\pm}(\cdot)$ is the indicator function of $\mathbb{R}^{n\times n}_{\pm}$.

\paragraph{Smoothing approximation.} The function $\hat{\psi}$ obtained by \eqref{extension_shape} is nonsmooth. When a smooth function is required, we then need to compute a smooth approximation to $\hat{\psi}$. The idea of Nesterov's smoothing \cite{nesterov2005smooth} could be applied, and the details is described in \cite[Section 3]{mazumder2019computational}. Alternatively, one can use the Moreau envelope as a smooth approximation of $\hat{\psi}$, namely
\begin{align}
\hat{\psi}^{\rm M}_{\tau}(x)=\tau {\rm E}_{\hat{\psi}/\tau}(x)=\min_{y\in \mathbb{R}^d} \Big\{\hat{\psi}(y)+\frac{\tau}{2}\|y-x\|^2
\Big\},\label{smooth_moreau}
\end{align}
where $\tau>0$ is a regularization parameter. Note that 
\begin{align*}
\hat{\psi}^{\rm M}_{\tau}(x) = \min_{y\in \mathbb{R}^d,t\in \mathbb{R}}\Big\{t+\frac{\tau}{2}\|y-x\|^2\Bigm\lvert
t\geq \langle \hat{\xi}_j,y\rangle -\langle \hat{\xi}_j,X_j\rangle+\hat{\theta}_j,\ j=1,\cdots,n
\Big\},
\end{align*}
and the unique solution ${\rm Prox}_{\hat{\psi}/\tau}(x)$ of \eqref{smooth_moreau} can be obtained by solving a quadratic programming of dimension $d+1$, which could be efficiently computed by {\tt Gurobi} or {\tt MOSEK}. One can see that for any $\tau>0$, $\hat{\psi}^{\rm M}_{\tau}$ is convex, and differentiable with $\nabla \hat{\psi}^{\rm M}_{\tau}(x)=\tau(x-{\rm Prox}_{\hat{\psi}/\tau}(x))$. In addition, according to \cite{beck2012smoothing}, the approximation $\hat{\psi}^{\rm M}_{\tau}$ of $\hat{\psi}$ satisfies the approximation bound
\begin{align*}
0\leq \hat{\psi}(x)-\hat{\psi}^{\rm M}_{\tau}(x)\leq \frac{1}{\tau}{\rm dist}^2(0,\partial \hat{\psi}(x))\leq \frac{L^2}{2\tau},\quad \forall x\in \Omega,
\end{align*}
where $L=\max\{\|\xi_j\|_2\mid j=1,\cdots,n\}$.

\paragraph{Dual problem and optimality conditions.} To derive its dual, it is convenient for us to write \eqref{reformulated_problem} as
\begin{align}
\min_{\theta\in \mathbb{R}^n,\xi,y\in \mathbb{R}^{dn},\eta\in \mathbb{R}^{n\times n}} \Big\{\frac{1}{2}\|\theta-Y\|^2+p(y)+\delta_{-}(\eta)\Bigm\lvert\eta+\mathcal{A}\theta+\mathcal{B}\xi=0,\ \xi-y=0\Big\}.\label{P}
\end{align}
The associated Lagrangian function is
\begin{align*}
l(\theta,\xi,y,\eta;u,v)=\frac{1}{2}\|\theta-Y\|^2+p(y)+\delta_{-}(\eta)-\langle u,\eta+\mathcal{A}\theta+\mathcal{B}\xi\rangle-\langle v,\xi-y\rangle.
\end{align*}
By minimizing $l(\theta,\xi,y,\eta;u,v)$ with respect to $\theta,\xi,y,\eta$, the dual problem of \eqref{reformulated_problem} is given by
\begin{align}
\max_{u\in\mathbb{R}^{n\times n},v\in \mathbb{R}^{dn}}\Big\{-\frac{1}{2}\|\mathcal{A}^*u\|^2-\langle Y,\mathcal{A}^*u\rangle-p^*(-v)-\delta_{+}(u)\Bigm\lvert \mathcal{B}^*u+v=0\Big\}.\tag{D}\label{D}
\end{align}
The Karush-Kuhn-Tucker (KKT) conditions associated with \eqref{reformulated_problem} and \eqref{D} are given as follows:
\begin{align}\label{KKT_system}
\theta-Y-\mathcal{A}^*u=0,\quad \mathcal{B}^*u+v=0,\ -v\in\partial p(\xi),\quad -u\in \partial \delta_{+}(\mathcal{A}\theta+\mathcal{B}\xi).
\end{align}

\section{Symmetric Gauss-Seidel based alternating direction method of multipliers ({\tt sGS-ADMM}) for \eqref{reformulated_problem}}\label{sec:sGSADMM}
The popular first-order alternating direction method of multipliers ({\tt ADMM}) can be applied to solve \eqref{reformulated_problem}. In \cite[Section A.2]{mazumder2019computational}, the problem \eqref{reformulated_problem} is reformulated as 
\begin{align*}
\min_{\theta\in \mathbb{R}^n,\xi\in \mathbb{R}^{dn},\eta\in \mathbb{R}^{n\times n}} \Big\{\frac{1}{2}\|\theta-Y\|^2+p(\xi)+\delta_{-}(\eta)\mid\eta+\mathcal{A}\theta+\mathcal{B}\xi=0\Big\}.
\end{align*}
The corresponding augmented Lagrangian function for a fixed $\sigma>0$ is defined by
\begin{align*}
\widetilde{\mathcal{L}}_{\sigma}(\theta,\xi,\eta;u)=\frac{1}{2}\|\theta-Y\|^2+p(\xi)+\delta_{-}(\eta)+\frac{\sigma}{2}\|\eta+\mathcal{A}\theta+\mathcal{B}\xi-\frac{u}{\sigma}\|^2-\frac{1}{2\sigma}\|u\|^2.
\end{align*}
Then the two-block {\tt ADMM} is given as
\begin{align*}
\left\{\begin{aligned}
&\xi^{k+1} = \arg\min \widetilde{\mathcal{L}}_{\sigma}(\theta^{k},\xi,\eta^{k};u^{k})=\arg\min\Big\{
p(\xi)+\frac{\sigma}{2}\|\eta^k+\mathcal{A}\theta^k+\mathcal{B}\xi-\frac{u^k}{\sigma}\|^2
\Big\},\\
&(\theta^{k+1},\eta^{k+1}) = \arg\min \widetilde{\mathcal{L}}_{\sigma}(\theta,\xi^{k+1},\eta;u^{k}),\\
& u^{k+1}=u^k-\tau\sigma(\eta^{k+1}+\mathcal{A}\theta^{k+1}+\mathcal{B}\xi^{k+1}),
\end{aligned}\right.
\end{align*}
where $\tau\in(0,(1+\sqrt{5}/2))$ is a given step length. As described in \cite{mazumder2019computational}, the subproblem of updating $\xi$ is separable in the variables $\xi_i$'s for $i=1,\cdots,n$, and the update of each $\xi_i$ can be solved by using an interior point method. The update of $\theta$ and $\eta$ is performed by using a block coordinate descent method, which may converge slowly. One can also apply the directly extended three-block {\tt ADMM} algorithm as in \cite[Section 2.1]{mazumder2019computational} to solve \eqref{reformulated_problem}, and the steps are given by
\begin{align*}
\left\{\begin{aligned}
&\xi^{k+1} = \arg\min \widetilde{\mathcal{L}}_{\sigma}(\theta^{k},\xi,\eta^{k};u^{k}),\\
&\theta^{k+1} = \arg\min \widetilde{\mathcal{L}}_{\sigma}(\theta,\xi^{k+1},\eta^k;u^{k}),\\
&\eta^{k+1} = \arg\min \widetilde{\mathcal{L}}_{\sigma}(\theta^{k+1},\xi^{k+1},\eta;u^{k}),\\
& u^{k+1}=u^k-\tau\sigma(\eta^{k+1}+\mathcal{A}\theta^{k+1}+\mathcal{B}\xi^{k+1}).
\end{aligned}\right.
\end{align*}
In the directly extended three-block {\tt ADMM}, the subproblem of updating $\theta$ can be computed by solving a linear system, and that of updating $\eta$ can be solved by the projection onto $\mathbb{R}_{-}^{n\times n}$. However, it is shown in \cite{chen2016direct} that the directly extended three-block {\tt ADMM} may not be convergent. 

In this section, we aim to present a convergent multi-block {\tt ADMM} for solving \eqref{reformulated_problem}. The authors in \cite{chen2017efficient} have proposed an inexact symmetric Gauss-Seidel based multi-block {\tt ADMM} for solving high-dimensional convex composite conic optimization problems, and it was demonstrated to perform better than the possibly nonconvergent directly extended multi-block {\tt ADMM}. Given a parameter $\sigma>0$, the augmented Lagrangian function associated with \eqref{P} is defined by
\begin{align}
&\mathcal{L}_{\sigma}(\theta,\xi,y,\eta;u,v)=l(\theta,\xi,y,\eta;u,v)+\frac{\sigma}{2}\|\eta+\mathcal{A}\theta+\mathcal{B}\xi\|^2+\frac{\sigma}{2}\|\xi-y\|^2\notag\\
&=\frac{1}{2}\|\theta-Y\|^2+p(y)+\delta_{-}(\eta)+\frac{\sigma}{2}\|\eta+\mathcal{A}\theta+\mathcal{B}\xi-\frac{u}{\sigma}\|^2+\frac{\sigma}{2}\|\xi-y-\frac{v}{\sigma}\|^2-\frac{1}{2\sigma}\|u\|^2-\frac{1}{2\sigma}\|v\|^2.
\label{augmented_lagrangian}
\end{align}
Then the symmetric Gauss-Seidel based {\tt ADMM} ({\tt sGS-ADMM}) algorithm for solving \eqref{reformulated_problem} is given as follows.

\begin{varalgorithm}{{\tt sGS-ADMM}}\small
	\caption{: Symmetric Gauss-Seidel based {\tt ADMM} for \eqref{reformulated_problem}} 
	\label{alg:sGSadmm}
	\begin{algorithmic}
		\STATE {\bfseries Initialization:} Choose an initial point $(\theta^0,\xi^0,y^0,\eta^0,u^0,v^0)\in \mathbb{R}^n\times \mathbb{R}^{dn}\times \mathbb{R}^{dn}\times\mathbb{R}^{n\times n}\times\mathbb{R}^{n\times n}\times \mathbb{R}^{dn}$, and parameter $\sigma > 0$. For $k = 0, 1, 2, \dots$
		\REPEAT
		\STATE {\bfseries Step 1}. Compute
		\begin{align*}
		(y^{k+1},\eta^{k+1}) = \arg\min \mathcal{L}_{\sigma}(\theta^{k},\xi^k,y,\eta;u^{k},v^{k}).
		\end{align*}
		\\
		\STATE {\bfseries Step 2}. Compute
		\begin{align*}
		&\mbox{{\bfseries Step 2a}.  } \widehat{\theta}^{k+1} = \arg\min  \mathcal{L}_{\sigma}(\theta,\xi^{k},y^{k+1},\eta^{k+1};u^{k},v^{k}),\\
		&\mbox{{\bfseries Step 2b}.  } \xi^{k+1} = \arg\min \mathcal{L}_{\sigma}(\widehat{\theta}^{k+1},\xi,y^{k+1},\eta^{k+1};u^{k},v^{k}),\\
		&\mbox{{\bfseries Step 2c}.  } \theta^{k+1} = \arg\min  \mathcal{L}_{\sigma}(\theta,\xi^{k+1},y^{k+1},\eta^{k+1};u^{k},v^{k}).
		\end{align*}
		\\
		\STATE {\bfseries Step 3}. Compute
		\begin{align*}
		u^{k+1}=u^k-\tau\sigma(\eta^{k+1}+\mathcal{A}\theta^{k+1}+\mathcal{B}\xi^{k+1}),\ v^{k+1}=v^k-\tau\sigma(\xi^{k+1}-y^{k+1}),
		\end{align*}
		where $\tau\in(0,(1+\sqrt{5})/2)$ is the step length that is typically chosen to be $1.618$. 
		\UNTIL{Stopping criterion is satisfied.}
	\end{algorithmic}
\end{varalgorithm}

In Algorithm \ref{alg:sGSadmm}, all the subproblems can be solved explicitly. In {\bf Step 1}, $\eta^{k+1}$ and $y^{k+1}$ are separable and can be solved independently as
\begin{align*}
y^{k+1} = {\rm Prox}_{p/\sigma}(\xi^k-v^k/\sigma),\quad \eta^{k+1} = \Pi_{-}(-\mathcal{A}\theta^k-\mathcal{B}\xi^k+u^k/\sigma),
\end{align*}
where $\Pi_{\pm}(\cdot)$ denotes the projection onto $\mathbb{R}^{n\times n}_{\pm}$. In {\bf Step 2a} and {\bf Step 2c}, $\theta$ can be computed by solving the following linear system
\begin{align*}
(I_n+\sigma \mathcal{A}^*\mathcal{A})\theta = Y-\sigma\mathcal{A}^* (\eta+\mathcal{B}\xi-u/\sigma).
\end{align*}
By noting that $\mathcal{A}^*\mathcal{A}=2nI_n-2e_ne_n^T$, one can apply the Sherman-Morrison-Woodbury formula to compute
\begin{align*}
(I_n+\sigma \mathcal{A}^*\mathcal{A})^{-1} = \frac{1}{1+2\sigma n}(I_n+2\sigma e_ne_n^T).
\end{align*}
Thus $\theta$ can be computed in $O(n)$ operations. For {\bf Step 2b}, $\xi^{k+1}$ can be computed by solving the linear equation
\begin{align*}
(I_{dn}+\mathcal{B}^*\mathcal{B})\xi = y^{k+1}+v^k/\sigma-\mathcal{B}^*(\eta^{k+1}+\mathcal{A}\widehat{\theta}^{k+1}-u^k/\sigma).
\end{align*}
As the coefficient matrix $I_{dn}+\mathcal{B}^*\mathcal{B}$ is a block diagonal matrix consisting $n$ blocks of $d\times d$ submatrices, each $\xi_i$ can be computed separately, and the inverse of each block only needs to be computed once. 

The convergence result of Algorithm \ref{alg:sGSadmm} is presented in the following theorem, which is taken directly from \cite[Theorem 5.1]{chen2017efficient}.
\begin{theorem}\label{thm:sgsadmm}
	Suppose that the solution set to the KKT system \eqref{KKT_system} is nonempty. Let $\{(\theta^k,\xi^k,y^k,\eta^k,u^k,v^k)\}$ be the sequence generated by Algorithm \ref{alg:sGSadmm}. Then the sequence $\{(\theta^k,\xi^k,y^k,\eta^k)\}$ converges to an optimal solution of problem \eqref{P}, and the sequence $\{(u^k,v^k)\}$ converges to an optimal solution of its dual \eqref{D}.
\end{theorem}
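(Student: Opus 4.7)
The plan is to invoke the convergence result of Chen, Sun, Toh [chen2017efficient, Theorem 5.1] for the inexact symmetric Gauss-Seidel based multi-block ADMM, after reformulating Algorithm \ref{alg:sGSadmm} as a two-block sGS-ADMM in the form covered there. I would first identify the block structure: group the variables into the "easy" block $w := (y,\eta)$, whose contribution to $\mathcal{L}_\sigma$ is separable and nonsmooth (the $p$ and $\delta_-$ terms together with simple quadratic pieces), and the "quadratic" block $z := (\theta,\xi)$, whose contribution is a convex quadratic function in $z$ once the other variables are fixed. The coupling constraints $\eta + \mathcal{A}\theta + \mathcal{B}\xi = 0$ and $\xi - y = 0$ are linear, and the objective decomposes as $f(z) + g(w)$ with $f$ quadratic and $g$ closed proper convex.

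Second, I would verify that Steps 2a--2c of Algorithm \ref{alg:sGSadmm} are exactly the symmetric Gauss-Seidel sweep applied to the $z$-block. Writing $\mathcal{L}_\sigma$ as a function of $(\theta,\xi)$ with the other variables fixed gives a quadratic with self-adjoint positive definite Hessian of the form
\begin{equation*}
\mathcal{H} \;=\; \begin{pmatrix} I_n + \sigma\mathcal{A}^*\mathcal{A} & \sigma \mathcal{A}^*\mathcal{B} \\ \sigma\mathcal{B}^*\mathcal{A} & \sigma I_{dn} + \sigma\mathcal{B}^*\mathcal{B}\end{pmatrix},
\end{equation*}
whose diagonal blocks are positive definite. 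Appealing to the sGS decomposition identity (see [chen2017efficient, Theorem 2.1]), the three updates in Step 2, performed in the order $\theta \to \xi \to \theta$, are equivalent to minimizing $\mathcal{L}_\sigma(\cdot,\cdot,y^{k+1},\eta^{k+1};u^k,v^k)$ over $(\theta,\xi)$ jointly, together with an extra proximal term $\tfrac{1}{2}\|(\theta,\xi)-(\theta^k,\xi^k)\|_{\mathcal{T}}^{2}$, where $\mathcal{T}$ is an explicit positive semidefinite operator determined by the off-diagonal block $\sigma\mathcal{A}^*\mathcal{B}$. This reduces the algorithm to a standard two-block semi-proximal ADMM applied to \eqref{P}.

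Third, I would check the hypotheses of [chen2017efficient, Theorem 5.1]: the KKT system \eqref{KKT_system} has a nonempty solution set (assumed in the theorem), the step length $\tau$ lies in $(0,(1+\sqrt{5})/2)$, and the proximal operator $\mathcal{T}$ constructed from the sGS decomposition is positive semidefinite, which is automatic because the $(\theta,\xi)$-Hessian $\mathcal{H}$ has positive definite diagonal blocks so that the corresponding sGS operator is well defined and PSD. Convergence then gives that $\{(\theta^k,\xi^k,y^k,\eta^k)\}$ converges to a solution of \eqref{P} and $\{(u^k,v^k)\}$ to a solution of \eqref{D}.

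The main obstacle I anticipate is the bookkeeping in step two: one must be careful that the coupling through two separate equality constraints (the one involving $\eta$ and the one involving $y$) is correctly absorbed into the single two-block template of [chen2017efficient], and that the extra proximal term introduced by the sGS decomposition does not destroy the positive definiteness required for convergence. Once these identifications are made, the conclusion follows directly from the cited theorem, which is why the authors can state the result without reproducing the proof.
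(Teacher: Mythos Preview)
Your proposal is correct and follows exactly the paper's approach: the paper itself gives no proof but simply states that the result ``is taken directly from \cite[Theorem 5.1]{chen2017efficient}.'' Your write-up supplies the verification---the $(y,\eta)$ vs.\ $(\theta,\xi)$ block identification and the recognition of Steps 2a--2c as the sGS sweep---that the paper leaves implicit.
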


We can see that the {\tt sGS-ADMM} algorithm is easy to implement, but it is just a first-order algorithm which may not be efficient enough for solving \eqref{reformulated_problem} to high accuracy. In the next section, we design a superlinearly convergent proximal augmented Lagrangian method for solving \eqref{reformulated_problem}. By making full use of the special structure of the problem, we can exploit the second-order sparsity structure in the problem to greatly reduce the computational cost required in solving each of its subproblem.

\section{Proximal augmented Lagrangian method ({\tt pALM}) for \eqref{reformulated_problem}}\label{sec:pALM}
The augmented Lagrangian method is a desirable method for solving convex composite programming problems due to its superlinear convergence. To take advantage of the fast local convergence, we design a proximal augmented Lagrangian method ({\tt pALM}) for solving \eqref{reformulated_problem}. Note that the augmented Lagrangian function associated with \eqref{reformulated_problem} for any fixed $\sigma>0$ can be derived as
\begin{align*}
\widehat{\mathcal{L}}_{\sigma}(\theta,\xi;u,v)=
\inf_{y,\eta}\mathcal{L}_{\sigma}(\theta,\xi,y,\eta;u,v),
\end{align*}
where $\mathcal{L}_{\sigma}$ is defined in \eqref{augmented_lagrangian}. Therefore, by making use of the Moreau envelope,
\begin{align*}
\widehat{\mathcal{L}}_{\sigma}(\theta,\xi;u,v)=\frac{1}{2}\|\theta-Y\|^2+\sigma{\rm E}_{p}(\xi-\frac{v}{\sigma})+\sigma{\rm E}_{\delta_{-}}(-\mathcal{A}\theta-\mathcal{B}\xi+\frac{u}{\sigma}) -\frac{1}{2\sigma}\|u\|^2-\frac{1}{2\sigma}\|v\|^2.
\end{align*}
We propose the {\tt pALM} algorithm for solving \eqref{reformulated_problem} as follows.
 
\begin{varalgorithm}{{\tt pALM}}\small
	\caption{: Proximal augmented Lagrangian method for \eqref{reformulated_problem}}
	\label{alg:pALM}
	\begin{algorithmic}
		\STATE {\bfseries Initialization:} Let $H_1\in \mathbb{R}^{n\times n}$, $H_2\in \mathbb{R}^{dn\times dn}$ be given positive definite matrices, and $\{\varepsilon_k\}$ be a given summable sequence of nonnegative numbers. Choose a initial point $(\theta^0,\xi^0,u^0,v^0)\in \mathbb{R}^n\times \mathbb{R}^{dn}\times\mathbb{R}^{n\times n}\times \mathbb{R}^{dn}$, $\sigma_0 > 0$. For $k = 0, 1, 2, \dots$
		\REPEAT
		\STATE {\bfseries Step 1}. Compute
		\begin{align}
		&(\theta^{k+1},\xi^{k+1}) \approx \arg\min \Big\{\Phi_k(\theta,\xi):=\widehat{\mathcal{L}}_{\sigma}(\theta,\xi;u^k,v^k)+\frac{1}{2\sigma_k}\|\theta-\theta^k\|_{H_1}^2+\frac{1}{2\sigma_k}\|\xi-\xi^k\|_{H_2}^2\Big\},\label{eq:solve_thetaxi}
		\end{align}
	    such that the approximation solution $(\theta^{k+1},\xi^{k+1})$ satisfies the following stopping condition:
		\begin{align}
		\|\nabla \Phi_k(\theta^{k+1},\xi^{k+1})\|&\leq \frac{\lambda_{\min}}{\sigma_k}\varepsilon_k, \tag{A}\label{A}
		\end{align}
		where $\lambda_{\min} =\min\{\lambda_{\min}(H_1),\lambda_{\min}(H_2),1 \}$.
		\\[3pt]
		\STATE {\bfseries Step 2}. Update $u$, $v$ by
		\begin{align}
		&u^{k+1}=\sigma_k\Big[ u^k/\sigma^k-\mathcal{A}\theta^{k+1}-\mathcal{B}\xi^{k+1}-\Pi_{-}(u^k/\sigma^k-\mathcal{A}\theta^{k+1}-\mathcal{B}\xi^{k+1})\Big],\notag\\
		&v^{k+1}=-\sigma_k \Big[ \xi^{k+1}-v^k/\sigma_k-{\rm Prox}_p(\xi^{k+1}-v^k/\sigma^k) \Big].\notag
		\end{align}
		\\[3pt]
		\STATE {\bfseries Step 3}. Update $\sigma_{k+1} \uparrow \sigma_{\infty} \leq \infty$.
		\UNTIL{Stopping criterion is satisfied.}
	\end{algorithmic}
\end{varalgorithm}

\subsection{Convergence results for {\tt pALM}}\label{subsec:convergencepALM}
The Lagrangian function associated with \eqref{reformulated_problem} can be derived as
\begin{align*}
\widehat{l}(\theta,\xi,u,v)=\inf_{y,\eta}l(\theta,\xi,y,\eta;u,v)=\frac{1}{2}\|\theta-Y\|^2-p^*(-v)-\delta_{+}(u)-\langle u,\mathcal{A}\theta+\mathcal{B}\xi\rangle-\langle v,\xi\rangle.
\end{align*}
Define the maximal monotone operator $\mathcal{T}_{\widehat{l}}$ as
\begin{align*}
\mathcal{T}_{\widehat{l}}(\theta,\xi,u,v)=\Big\{(\theta',\xi',-u',-v')\mid(\theta',\xi',-u',-v')\in \partial \widehat{l}(\theta,\xi,u,v)
\Big\}.
\end{align*}
For any $(\bar{\theta},\bar{\xi},\bar{u},\bar{v})\in \mathbb{R}^n\times \mathbb{R}^{dn}\times \mathbb{R}^{n\times n}\times \mathbb{R}^{dn}$, denote
\begin{align*}
P_k(\bar{\theta},\bar{\xi},\bar{u},\bar{v})=\arg\min_{\theta,\xi}\max_{u,v}
\Big\{\widehat{l}(\theta,\xi;u,v)+\frac{1}{2\sigma_k}\|\theta-\bar{\theta}\|_{H_1}^2+\frac{1}{2\sigma_k}\|\xi-\bar{\xi}\|_{H_2}^2-\frac{1}{2\sigma_k}\|u-\bar{u}\|_F^2-\frac{1}{2\sigma_k}\|v-\bar{v}\|^2
\Big\}.
\end{align*}
Define the block diagonal operator $\Sigma = {\rm Diag}(H_1,H_2,\mathcal{I}_{n\times n},I_{dn})$, where $\mathcal{I}_{n\times n}$ is the identity operator of $\mathbb{R}^{n\times n}$. By mimicking the idea in \cite{li2019asymptotically}, one can prove the following proposition.
\begin{proposition}\label{prop:propertyTl}
(1) For any $k\geq 0$, it holds that for any $(\bar{\theta},\bar{\xi},\bar{u},\bar{v})\in \mathbb{R}^n\times \mathbb{R}^{dn}\times \mathbb{R}^{n\times n}\times \mathbb{R}^{dn}$,
\begin{align*}
P_k(\bar{\theta},\bar{\xi},\bar{u},\bar{v})=(\Sigma+\sigma_k \mathcal{T}_{\widehat{l}})^{-1}\Sigma (\bar{\theta},\bar{\xi},\bar{u},\bar{v}).
\end{align*}
If $(\theta^*,\xi^*,u^*,v^*)\in \mathcal{T}_{\widehat{l}}^{-1}(0)$, then $P_k(\theta^*,\xi^*,u^*,v^*)=(\theta^*,\xi^*,u^*,v^*)$.\\
(2) For all $k\geq 0$,
\begin{align*}
\|(\theta^{k+1},\xi^{k+1},u^{k+1},v^{k+1})-P_k(\theta^{k},\xi^{k},u^{k},v^{k})\|_{\Sigma}\leq 
\frac{\sigma_k}{\lambda_{\min}}\|\nabla \Phi_k(\theta^{k+1},\xi^{k+1})\|,
\end{align*}
where $\lambda_{\min} =\min\{\lambda_{\min}(H_1),\lambda_{\min}(H_2),1 \}$.
\end{proposition}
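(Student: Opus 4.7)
The plan is to treat the two parts sequentially: in Part (1) recognise $P_k$ as the $\Sigma$-weighted resolvent of $\sigma_k\mathcal{T}_{\widehat{l}}$, and in Part (2) account for the inexactness of Step 1 through an explicit perturbation of the associated monotone inclusion. Throughout, abbreviate $z^k:=(\theta^k,\xi^k,u^k,v^k)$ (and similarly $z^{k+1}$) and $\tilde z:=P_k(z^k)$. For Part (1), the saddle-point problem defining $P_k(\bar\theta,\bar\xi,\bar u,\bar v)$ is strongly convex in $(\theta,\xi)$ and strongly concave in $(u,v)$, so its unique saddle point is characterised by a block of first-order conditions. Collecting those conditions and invoking the definition of $\mathcal{T}_{\widehat{l}}$ yields $\Sigma(\bar\theta,\bar\xi,\bar u,\bar v)\in(\Sigma+\sigma_k\mathcal{T}_{\widehat{l}})(\theta,\xi,u,v)$, which inverts to the advertised resolvent formula. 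The fixed-point claim is immediate: if $0\in\mathcal{T}_{\widehat{l}}(z^*)$, then $\Sigma z^*\in(\Sigma+\sigma_k\mathcal{T}_{\widehat{l}})(z^*)$, hence $P_k(z^*)=z^*$.

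For Part (2), Part (1) supplies $\Sigma z^k-\Sigma\tilde z\in\sigma_k\mathcal{T}_{\widehat{l}}(\tilde z)$, and the core step is to exhibit a selection $w\in(\Sigma+\sigma_k\mathcal{T}_{\widehat{l}})(z^{k+1})$ whose deviation from $\Sigma z^k$ is driven only by $\nabla\Phi_k$. Differentiating $\widehat{\mathcal{L}}_{\sigma_k}$ via the Moreau envelope identity $\nabla{\rm E}_q(x)=x-{\rm Prox}_q(x)$ together with the cone-homogeneity $\sigma_k\Pi_+(\cdot)=\Pi_+(\sigma_k\cdot)$ gives
\begin{align*}
\sigma_k\nabla_\theta\Phi_k(\theta^{k+1},\xi^{k+1}) &= H_1(\theta^{k+1}-\theta^k)+\sigma_k\bigl[(\theta^{k+1}-Y)-\mathcal{A}^*u^{k+1}\bigr],\\
\sigma_k\nabla_\xi\Phi_k(\theta^{k+1},\xi^{k+1}) &= H_2(\xi^{k+1}-\xi^k)+\sigma_k\bigl[-\mathcal{B}^*u^{k+1}-v^{k+1}\bigr],
\end{align*}
where $u^{k+1},v^{k+1}$ are precisely the Step 2 outputs. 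The Moreau decomposition applied to $u^{k+1}=\Pi_+(u^k-\sigma_k(\mathcal{A}\theta^{k+1}+\mathcal{B}\xi^{k+1}))$, combined with the cone invariance $\sigma_k\partial\delta_+=\partial\delta_+$, yields $u^k\in u^{k+1}+\sigma_k[\partial\delta_+(u^{k+1})+\mathcal{A}\theta^{k+1}+\mathcal{B}\xi^{k+1}]$; and the Fenchel--Young equivalence $-v^{k+1}\in\partial p(y^{k+1})\Leftrightarrow y^{k+1}\in\partial p^*(-v^{k+1})$ applied to the ${\rm Prox}_p$-based $v$-update yields $v^k\in v^{k+1}+\sigma_k[-\partial p^*(-v^{k+1})+\xi^{k+1}]$. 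These two inclusions are exactly the $u$- and $v$-components of $(\Sigma+\sigma_k\mathcal{T}_{\widehat{l}})(z^{k+1})$, so one may choose $w$ with $w-\Sigma z^k=\sigma_k(\nabla_\theta\Phi_k,\,\nabla_\xi\Phi_k,\,0,\,0)$.

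The bound then follows from a single monotonicity argument. Combining the inclusions $w-\Sigma z^{k+1}\in\sigma_k\mathcal{T}_{\widehat{l}}(z^{k+1})$ and $\Sigma z^k-\Sigma\tilde z\in\sigma_k\mathcal{T}_{\widehat{l}}(\tilde z)$ with monotonicity of $\mathcal{T}_{\widehat{l}}$ gives $\langle w-\Sigma z^k,\,z^{k+1}-\tilde z\rangle\ge\|z^{k+1}-\tilde z\|_\Sigma^2$, and Cauchy--Schwarz in the $\Sigma/\Sigma^{-1}$-duality produces $\|z^{k+1}-\tilde z\|_\Sigma\le\|w-\Sigma z^k\|_{\Sigma^{-1}}$. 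Expanding the right-hand side through the block form $\Sigma={\rm Diag}(H_1,H_2,\mathcal{I}_{n\times n},I_{dn})$ gives
\begin{align*}
\|w-\Sigma z^k\|_{\Sigma^{-1}}=\sigma_k\sqrt{\|\nabla_\theta\Phi_k\|_{H_1^{-1}}^2+\|\nabla_\xi\Phi_k\|_{H_2^{-1}}^2}\le\frac{\sigma_k}{\sqrt{\lambda_{\min}}}\,\|\nabla\Phi_k(\theta^{k+1},\xi^{k+1})\|,
\end{align*}
and since the defining minimum contains $1$ we have $\lambda_{\min}\le 1$, upgrading $1/\sqrt{\lambda_{\min}}$ to the claimed $1/\lambda_{\min}$.

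The main obstacle I anticipate is the careful bookkeeping of the $u$- and $v$-block computations that show the dual components of $w-\Sigma z^k$ vanish exactly. In particular, one must align (i) the Moreau decomposition of $\Pi_+(\cdot)$ with the cone invariance of $\partial\delta_+$ so as to absorb the scaling factor $\sigma_k$, and (ii) the Fenchel--Young conversion that turns the ${\rm Prox}_p$-based formula for $v^{k+1}$ into the subdifferential inclusion for $\partial p^*$. Once these two pieces are correctly matched, the remainder is a routine resolvent-monotonicity computation along the lines of \cite{li2019asymptotically}.
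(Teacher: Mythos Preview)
Your proposal is correct and follows precisely the resolvent/monotonicity template the paper defers to in \cite{li2019asymptotically}; the paper itself gives no independent proof, so there is nothing further to compare. One small point worth making explicit in your write-up: in the $v$-block you pass from $-v^{k+1}/\sigma_k\in\partial p(y^{k+1})$ (which is what the ${\rm Prox}_p$ identity gives directly) to $-v^{k+1}\in\partial p(y^{k+1})$, and this uses that $p$ is an indicator so $\partial p$ is a normal cone and hence positively homogeneous---exactly the analogue of the cone invariance you already invoke for $\partial\delta_+$ in the $u$-block.
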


Based on the above proposition, we have the following convergence results, adapted from \cite{li2019asymptotically}, for Algorithm \ref{alg:pALM}.
\begin{theorem}\label{thm:convergence_pALM}
(1) Let $\{(\theta^{k},\xi^{k},u^{k},v^{k})\}$ be the sequence generated by Algorithm \ref{alg:pALM} with the stopping criterion \eqref{A}. Then $(\theta^{k},\xi^{k},u^{k},v^{k})$ is bounded,  $\{(\theta^{k},\xi^{k})\}$ converges to an optimal solution of \eqref{reformulated_problem}, and $\{(u^{k},v^{k})\}$ converges to an optimal solution of \eqref{D}.\\
(2) Let $\Lambda=\mathcal{T}_{\widehat{l}}^{-1}(0)$ be the primal-dual solution set. Let $r:=\sum_{i=0}^{\infty}\varepsilon_k+{\rm dist}_{\Sigma}((\theta^0,\xi^0,u^0,v^0),\Lambda)$. Assume that for this $r>0$, there exists a constant $\kappa>0$ such that $\mathcal{T}_{\widehat{l}}$ satisfies the following error bound assumption
\begin{align}
{\rm dist}((\theta,\xi,u,v),\Lambda)\leq \kappa {\rm dist}(0,\mathcal{T}_{\widehat{l}}(\theta,\xi,u,v)),\quad \mbox{$\forall (\theta,\xi,u,v)$ satisfying }{\rm dist}((\theta,\xi,u,v),\Lambda)\leq r. \label{error_bound}
\end{align}
Suppose that $\{(\theta^{k},\xi^{k},u^{k},v^{k})\}$ is the sequence generated by Algorithm \ref{alg:pALM} with the stopping criteria \eqref{A} and \eqref{B}, which is defined as
\begin{align}
\|\nabla \Phi_k(\theta^{k+1},\xi^{k+1})\|\leq \frac{\delta_k\lambda_{\min}}{\sigma_k}\|(\theta^{k+1},\xi^{k+1},u^{k+1},v^{k+1})-(\theta^{k},\xi^{k},u^{k},v^{k}) \|_{\Sigma},\tag{B}\label{B}
\end{align}
and $\{\delta_k\mid 0\leq \delta_k<1\}$ is a given summable sequence. Then it holds for all $k\geq 0$ that
\begin{equation}\label{eq:rate_pppa}
{\rm dist}_{\Sigma}((\theta^{k+1},\xi^{k+1},u^{k+1},v^{k+1}),\Lambda)\leq \mu_k {\rm dist}_{\Sigma}((\theta^{k},\xi^{k},u^{k},v^{k}),\Lambda),
\end{equation}
where
\[
\mu_k=\frac{1}{1-\delta_k}\frac{\delta_k+(1+\delta_k)\kappa \lambda_{\max} }{\sqrt{\sigma_k^2+\kappa^2\lambda_{\max}^2}}\rightarrow \mu_{\infty}=\frac{\kappa\lambda_{\max}}{\sqrt{
		\sigma_{\infty}^2+\kappa^2\lambda_{\max}^2}}<1,\quad k\rightarrow \infty,
\]
and $\lambda_{\max} = \max\{\lambda_{\max}(H_1),\lambda_{\max}(H_2),1\}$.
\end{theorem}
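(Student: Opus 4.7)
The plan is to view Algorithm \ref{alg:pALM} as an inexact preconditioned proximal point algorithm (pPPA) applied to the maximal monotone operator $\mathcal{T}_{\widehat{l}}$ with respect to the metric induced by $\Sigma$, and then invoke the convergence machinery from the cited reference li2019asymptotically. The bridge between the algorithm and the abstract pPPA framework is provided entirely by Proposition \ref{prop:propertyTl}: part (1) identifies the exact pPPA iterate as $P_k(w^k)=(\Sigma+\sigma_k\mathcal{T}_{\widehat{l}})^{-1}\Sigma(w^k)$, where $w^k:=(\theta^k,\xi^k,u^k,v^k)$, and part (2) converts the algorithmic stopping condition on $\|\nabla\Phi_k(\theta^{k+1},\xi^{k+1})\|$ into a clean bound on the inexactness $\|w^{k+1}-P_k(w^k)\|_\Sigma$.

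For part (1), combining criterion \eqref{A} with Proposition \ref{prop:propertyTl}(2) gives $\|w^{k+1}-P_k(w^k)\|_\Sigma\leq\varepsilon_k$, and $\{\varepsilon_k\}$ is summable. Since $\Lambda=\mathcal{T}_{\widehat{l}}^{-1}(0)$ is nonempty (the KKT system \eqref{KKT_system} is assumed solvable) and $P_k$ is firmly nonexpansive in $\|\cdot\|_\Sigma$ with $P_k(w^*)=w^*$ for each $w^*\in\Lambda$, a standard Rockafellar-style Fej\'er-monotonicity argument shows that $\{w^k\}$ is bounded, ${\rm dist}_\Sigma(w^k,\Lambda)$ is nonincreasing up to a summable perturbation, and any cluster point of $\{w^k\}$ lies in $\Lambda$. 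Uniqueness of the limit is then obtained by an Opial-type argument in the $\Sigma$-norm. Projecting onto the block structure of $\Sigma$ yields the claimed convergence of $(\theta^k,\xi^k)$ to a primal optimal solution of \eqref{reformulated_problem} and of $(u^k,v^k)$ to a dual optimal solution of \eqref{D}.

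For part (2), under the error bound \eqref{error_bound} I would follow li2019asymptotically and first establish the ``exact'' contraction
\begin{equation*}
{\rm dist}_\Sigma(P_k(w^k),\Lambda)\;\leq\;\frac{\kappa\lambda_{\max}}{\sqrt{\sigma_k^2+\kappa^2\lambda_{\max}^2}}\,{\rm dist}_\Sigma(w^k,\Lambda),
\end{equation*}
which comes from combining the firm $\Sigma$-nonexpansivity of $P_k$ (giving $\|P_k(w^k)-w^*\|_\Sigma^2\le\|w^k-w^*\|_\Sigma^2-\|P_k(w^k)-w^k\|_\Sigma^2$) with the error bound applied to the residual characterization $\Sigma\sigma_k^{-1}(w^k-P_k(w^k))\in\mathcal{T}_{\widehat{l}}(P_k(w^k))$; the factor $\lambda_{\max}$ enters when converting between Euclidean distance (used in the error bound) and $\|\cdot\|_\Sigma$. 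Criterion \eqref{B} together with Proposition \ref{prop:propertyTl}(2) yields $\|w^{k+1}-P_k(w^k)\|_\Sigma\leq\delta_k\|w^{k+1}-w^k\|_\Sigma$; using the triangle inequality $\|w^{k+1}-w^k\|_\Sigma\le\|w^{k+1}-P_k(w^k)\|_\Sigma+\|P_k(w^k)-w^k\|_\Sigma$ and the nonexpansivity bound $\|P_k(w^k)-w^k\|_\Sigma\le 2\,{\rm dist}_\Sigma(w^k,\Lambda)$ (plus $\delta_k<1$) lets me isolate $\|w^{k+1}-P_k(w^k)\|_\Sigma$ in terms of ${\rm dist}_\Sigma(w^k,\Lambda)$. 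A final triangle inequality combining this with the exact contraction produces \eqref{eq:rate_pppa} with the stated $\mu_k$, and the limiting behavior $\mu_k\to\mu_\infty<1$ is immediate.

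The main obstacle will be carefully tracking the non-identity metric $\Sigma$ throughout, in particular the comparison between the Euclidean-norm error bound \eqref{error_bound} and the $\Sigma$-geometry natural to the pPPA, which is what forces $\lambda_{\max}$ (and, in the inexactness estimate of Proposition \ref{prop:propertyTl}(2), $\lambda_{\min}$) into the final constants. A secondary subtlety is that criterion \eqref{B} is implicit, since its right-hand side depends on the unknown $w^{k+1}$; one must verify that it is well-posed and then use it purely as an a posteriori bound. Since all of these ingredients are developed in li2019asymptotically for the general pPPA, the proof reduces to verifying that the hypotheses of that framework apply to our setting and then invoking the cited results.
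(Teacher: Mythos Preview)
Your proposal is correct and follows exactly the approach the paper takes: the paper itself does not give a proof but simply states that the result is ``adapted from \cite{li2019asymptotically}'' once Proposition~\ref{prop:propertyTl} is in hand, and your sketch accurately describes how that adaptation works (identifying $P_k$ as the preconditioned resolvent, converting the stopping criteria \eqref{A} and \eqref{B} into pPPA inexactness bounds via Proposition~\ref{prop:propertyTl}(2), and tracking the $\Sigma$-metric constants $\lambda_{\min},\lambda_{\max}$). If anything, you have supplied more detail than the paper does.
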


For specifying the convergence rate of Algorithm \ref{alg:pALM} for different choices of the closed convex set $\mathcal{D}$, we give the following remark.
\begin{remark}\label{remark:Tl}
	As one can see from Theorem \ref{thm:convergence_pALM}, the linear convergence rate of Algorithm \ref{alg:pALM} depends on the error bound assumption \eqref{error_bound} for the maximal monotone operator $\mathcal{T}_{\widehat{l}}$. It is well known that any polyhedral multifunction is upper Lipschitz continuous at every point of its domain according to \cite{robinson1981some}, which means it satisfies the error bound assumption \eqref{error_bound} for any $r>0$. For the cases when $\mathcal{D}$ is a polyhedral set, e.g. $\mathcal{D}=\mathbb{R}_{+}^d(\mathbb{R}_{-}^d)$ or $\mathcal{D}=\{x\in \mathbb{R}^d\mid \|x\|_q\leq L\}$ with $q=1$ or $q=\infty$, $\mathcal{T}_{\widehat{l}}$ is a polyhedral multifunction, and hence it satisfies the error bound assumption \eqref{error_bound}.
\end{remark}

\subsection{A semismooth Newton method for solving the {\tt pALM} subproblems}\label{subsec:ssn}
One can see that the most computationally intensive step in each of the {\tt pALM} is in solving the subproblem \eqref{eq:solve_thetaxi}. Here we described how it can be solved efficiently by the semismooth Newton method ({\tt SSN}). For given $\sigma>0$, $(\tilde{\theta},\tilde{\xi},\tilde{u},\tilde{v})\in \mathbb{R}^n\times \mathbb{R}^{dn}\times \mathbb{R}^{n\times n}\times \mathbb{R}^{dn}$, we aim to solve the {\tt pALM} subproblem, which has the form:
\begin{align}
\min \Big\{ \Phi(\theta,\xi):=\widehat{\mathcal{L}}_{\sigma}(\theta,\xi;\tilde{u},\tilde{v})+\frac{1}{2\sigma}\|\theta-\tilde{\theta}\|_{H_1}^2+\frac{1}{2\sigma}\|\xi-\tilde{\xi}\|_{H_2}^2\Big\}.\label{eq:pALMsub}
\end{align}
Since $\Phi(\cdot,\cdot)$ is strongly convex, the above minimization problem admits a unique solution $(\bar{\theta},\bar{\xi})$, which can be computed by solving the nonsmooth equation
\begin{align}\label{eq:nablapsi}
\nabla \Phi(\theta,\xi)=0,
\end{align}
where
\begin{align*}
\nabla \Phi(\theta,\xi)=\left(\begin{aligned}
&\theta-Y-\sigma\mathcal{A}^* \Big[(-\mathcal{A}\theta-\mathcal{B}\xi+\frac{\tilde{u}}{\sigma})-\Pi_{-}(-\mathcal{A}\theta-\mathcal{B}\xi+\frac{\tilde{u}}{\sigma})\Big]+\frac{1}{\sigma}H_1(\theta-\tilde{\theta})\\
&-\sigma \mathcal{B}^* \Big[(-\mathcal{A}\theta-\mathcal{B}\xi+\frac{\tilde{u}}{\sigma})-\Pi_{-}(-\mathcal{A}\theta-\mathcal{B}\xi+\frac{\tilde{u}}{\sigma})\Big]+\sigma\Big[(\xi-\frac{\tilde{v}}{\sigma})-{\rm Prox}_p(\xi-\frac{\tilde{v}}{\sigma})
\Big]+\frac{1}{\sigma}H_2(\xi-\tilde{\xi})
\end{aligned}\right).
\end{align*}
To apply the {\tt SSN} method to solve the above nonsmooth equation, we need a suitable generalized Jacobian of $\Phi(\cdot,\cdot)$, and we choose the following set as the candidate:
\begin{align*}
\hat{\partial}^2\Phi(\theta,\xi)=\sigma \left(\begin{aligned}
\mathcal{A}^*\\
\mathcal{B}^*
\end{aligned}\right)\Big[I-\partial \Pi_{-}(-\mathcal{A}\theta-\mathcal{B}\xi+\frac{\tilde{u}}{\sigma})\Big]\left(\begin{aligned}
&\mathcal{A} & \mathcal{B}
\end{aligned}\right)+\left(\begin{aligned}
I_n+&\frac{1}{\sigma}H_1\\
 &0
\end{aligned}\right.
\left.\begin{aligned}
&0\\
\sigma \Big[ I-\partial{\rm Prox}_p&(\xi-\frac{\tilde{v}}{\sigma})\Big]+\frac{1}{\sigma}H_2
\end{aligned}\right).
\end{align*}
With the suitably chosen generalized Jacobian, we can design the following {\tt SSN} method, which is a generalization of the standard Newton method, for solving \eqref{eq:pALMsub}.

\begin{varalgorithm}{{\tt SSN}}\small
	\caption{: Semismooth Newton method for \eqref{eq:pALMsub}}
	\label{alg:ssn}
	\begin{algorithmic}
		\STATE {\bfseries Initialization:} Given $(\theta^0,\xi^0)\in \mathbb{R}^n\times \mathbb{R}^{dn}$, $\bar{\gamma}\in (0, 1)$, $\tau \in (0, 1]$, $\delta \in (0, 1)$, and $\mu \in (0, 1/2)$. For $j = 0, 1, 2, \dots$
		\REPEAT
		\STATE {\bfseries Step 1}. Select an element $\mathcal{H}_j \in \hat{\partial}^{2} \Phi(\theta^j,\xi^j)$. Apply a direct method or the preconditioned conjugate gradient ({\tt PCG}) method to find an approximate solution $(\Delta \theta^j;\Delta \xi^j)\in \mathbb{R}^{n}\times \mathbb{R}^{dn}$ to
		\begin{equation}\label{eq: cg-system}
		\mathcal{H}_j(\Delta \theta;\Delta \xi) \approx - \nabla \Phi(\theta^j,\xi^j)
		\end{equation}
		such that $\|\mathcal{H}_j(\Delta \theta^j;\Delta \xi^j) + \nabla\Phi(\theta^j,\xi^j)\| \leq \min(\bar{\gamma}, \|\nabla\Phi(\theta^j,\xi^j)\|^{1+\tau})$.
		\\[3pt]
		\STATE {\bfseries Step 2}. Set $\alpha_j = \delta^{m_j}$, where $m_j$ is the smallest nonnegative integer $m$ for which
		$$\Phi(\theta^j + \delta^m \Delta \theta^j,\xi^j + \delta^m \Delta \xi^j) \leq \Phi(\theta^j,\xi^j) + \mu\delta^m \langle \nabla\Phi(\theta^j,\xi^j), (\Delta \theta^j;\Delta \xi^j)\rangle .$$
		\\[3pt]
		\STATE{\bfseries Step 3}. Set $\theta^{j+1} = \theta^j + \alpha_j \Delta \theta^j$, $\xi^{j+1}=\xi^j+\alpha_j \Delta \xi^j$.
		\UNTIL{Stopping criterion based on $\theta^{j+1}$ and $\xi^{j+1}$ is satisfied.}
	\end{algorithmic}
\end{varalgorithm}

The convergence analysis for Algorithm \ref{alg:ssn} can be established as in \cite{li2018highly}.
\begin{theorem}\label{thm:convergenceSSN}
    Suppose that ${\rm Prox}_{p} (\cdot)$ is strongly semismooth with respect to $\partial{\rm Prox}_{p} (\cdot)$. Let $\{(\theta^{j},\xi^{j})\}$ be the infinite sequence generated by Algorithm \ref{alg:ssn}. Then, $\{(\theta^{j},\xi^{j})\}$ converges to the unique optimal solution $(\bar{\theta},\bar{\xi})$ of problem \eqref{eq:pALMsub} and 
	\begin{align*}
	\|(\theta^{j+1},\xi^{j+1})-(\bar{\theta},\bar{\xi})\|=O(\|(\theta^{j},\xi^{j})-(\bar{\theta},\bar{\xi})\|^{1+\tau}).
	\end{align*}
\end{theorem}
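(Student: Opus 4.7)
The plan is to follow the by-now standard template for analyzing the semismooth Newton method applied to a strongly convex composite minimization, adapted to the specific generalized Jacobian $\hat\partial^2\Phi$ introduced above. Three ingredients need to be assembled: (i) $\nabla\Phi$ is semismooth with respect to $\hat\partial^2\Phi$; (ii) every element of $\hat\partial^2\Phi(\theta,\xi)$ is symmetric positive definite, with eigenvalues bounded away from zero uniformly; (iii) the line search is well defined and accepts the unit step eventually. Once these are in place, global convergence and the $1+\tau$ local rate follow from classical SSN arguments (e.g.\ as in \cite{li2018highly}).

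First, I would check semismoothness. The projection $\Pi_{-}$ onto the polyhedral cone $\mathbb{R}^{n\times n}_{-}$ is piecewise affine and hence strongly semismooth; combined with the hypothesis that ${\rm Prox}_p$ is strongly semismooth w.r.t.\ $\partial{\rm Prox}_p$, and with the fact that the composition/sum of (strongly) semismooth maps with smooth linear maps is (strongly) semismooth, one concludes that $\nabla\Phi$ is strongly semismooth with respect to $\hat\partial^2\Phi$. Second, for the uniform positive definiteness of $\hat\partial^2\Phi$, I would exploit the firm nonexpansiveness of $\Pi_{-}$ and ${\rm Prox}_p$: every $V\in\partial\Pi_{-}(\cdot)$ and every $W\in\partial{\rm Prox}_p(\cdot)$ is symmetric and satisfies $0\preceq V\preceq I$, $0\preceq W\preceq I$, so that $I-V\succeq 0$ and $I-W\succeq 0$. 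The first summand in $\hat\partial^2\Phi$ is therefore positive semidefinite, while the block $\mathrm{Diag}(I_n+H_1/\sigma,\,\sigma(I-W)+H_2/\sigma)$ inherits positive definiteness from $H_1,H_2\succ 0$, yielding the lower bound $\lambda_{\min}(\mathcal H_j)\geq \min\{1+\lambda_{\min}(H_1)/\sigma,\lambda_{\min}(H_2)/\sigma\}>0$ uniformly in $j$.

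With uniform positive definiteness, the Newton system \eqref{eq: cg-system} is solvable and the approximate direction $(\Delta\theta^j;\Delta\xi^j)$ is a descent direction for $\Phi$, so the Armijo line search is well defined and terminates in finitely many backtracks. This gives global convergence: the sequence $\{\Phi(\theta^j,\xi^j)\}$ is monotonically decreasing and bounded below (by $\Phi(\bar\theta,\bar\xi)$), and strong convexity of $\Phi$ together with $\{(\theta^j,\xi^j)\}$ lying in a level set implies that $(\theta^j,\xi^j)\to(\bar\theta,\bar\xi)$, the unique minimizer characterized by $\nabla\Phi(\bar\theta,\bar\xi)=0$.

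For the local rate, I would argue along the usual two-step pattern. Once $(\theta^j,\xi^j)$ is close enough to $(\bar\theta,\bar\xi)$, the uniform positive definiteness of $\mathcal H_j$, the residual tolerance $\|\mathcal H_j(\Delta\theta^j;\Delta\xi^j)+\nabla\Phi(\theta^j,\xi^j)\|\leq \|\nabla\Phi(\theta^j,\xi^j)\|^{1+\tau}$, and the strong semismoothness of $\nabla\Phi$ together yield
\[
\|(\theta^j+\Delta\theta^j,\xi^j+\Delta\xi^j)-(\bar\theta,\bar\xi)\|=O(\|(\theta^j,\xi^j)-(\bar\theta,\bar\xi)\|^{1+\tau}).
\]
A standard argument then shows that the unit step size $\alpha_j=1$ satisfies the Armijo condition with parameter $\mu\in(0,1/2)$ for all sufficiently large $j$, so that eventually $(\theta^{j+1},\xi^{j+1})=(\theta^j+\Delta\theta^j,\xi^j+\Delta\xi^j)$ and the claimed convergence rate follows. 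The main technical obstacle I anticipate is the verification that the unit step is eventually accepted; this requires combining the semismoothness-based Newton contraction with a careful Taylor-type expansion of $\Phi$ around $(\bar\theta,\bar\xi)$, using strong convexity to upper bound the ratio $[\Phi(\theta^j+\Delta\theta^j,\xi^j+\Delta\xi^j)-\Phi(\theta^j,\xi^j)]/\langle\nabla\Phi(\theta^j,\xi^j),(\Delta\theta^j;\Delta\xi^j)\rangle$ by a quantity approaching $1/2>\mu$, exactly as in the proof of \cite[Theorem 3.5]{li2018highly}.
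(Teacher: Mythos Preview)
Your proposal is correct and follows essentially the same approach as the paper: verify strong semismoothness of $\nabla\Phi$, observe that every element of $\hat\partial^2\Phi(\theta,\xi)$ is positive definite thanks to $H_1,H_2\succ 0$, and then invoke the standard SSN convergence template. The paper's own proof is terser---it simply appeals to strong convexity of $\Phi$ and positive definiteness of $\hat\partial^2\Phi$, then cites \cite[Proposition~3.3, Theorems~3.4 and~3.5]{zhao2010newton} rather than \cite{li2018highly}---but the underlying argument is the same one you have sketched in more detail.
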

\begin{proof}
	Due to the strong convexity of $\Phi(\cdot,\cdot)$, we can see that $(\theta^j,\xi^j)$ converges to the unique optimal solution $(\bar{\theta},\bar{\xi})$ \cite[Proposition 3.3 and Theorem 3.4]{zhao2010newton}. By the formulation of $\hat{\partial}^2\Phi(\cdot,\cdot)$, we have that all the elements in $\hat{\partial}^2\Phi(\theta,\xi)$ are positive definite for any $(\theta,\xi)\in\mathbb{R}^{n}\times\mathbb{R}^{dn}$ due to the positive definiteness of $H_1$ and $H_2$. Then the convergence rate of $(\theta^j,\xi^j)$ can be directly obtained from \cite[Theorem 3.5]{zhao2010newton}.
\end{proof}
\begin{remark}\label{remark:stronglysemismooth}
	As a side note, for the closed convex set $\mathcal{D}$ defined in Proposition \ref{prop:defD}, the assumption that ${\rm Prox}_{p} (\cdot)$ is strongly semismooth with respect to $\partial{\rm Prox}_{p} (\cdot)$ always holds.
\end{remark}

\section{Implementation of the algorithms}\label{sec:proximalmapping}
In this section, we discuss some numerical details concerning the efficient implementation of two proposed algorithms. For implementing the proposed algorithms, we need the proximal mapping ${\rm Prox}_p(\xi)$ for any $\xi\in\mathbb{R}^{dn}$ and its generalized Jacobian. In addition, when evaluating the function value of the dual problem \eqref{D}, we need the formula for $p^*(\cdot)$. 

\subsection{Computation associated with $\mathcal{D}$}\label{subsec:D}
For any $\xi=(\xi_1;\cdots;\xi_n)\in \mathbb{R}^{dn}$, since $p(\xi)= \sum_{i=1}^n \delta_\mathcal{D}(\xi_i)$, we have that
\begin{align}\label{eq:pfunction}
p^*(\xi)=\sum_{i=1}^n \delta_\mathcal{D}^*(\xi_i),\quad
{\rm Prox}_{ p}(\xi)=\left(\begin{array}{c}
\Pi_\mathcal{D}(\xi_1)\\
\vdots\\
\Pi_\mathcal{D}(\xi_n)
\end{array}\right),\quad 
\partial {\rm Prox}_{ p}(\xi)=\left(\begin{array}{ccc}
\partial \Pi_\mathcal{D}(\xi_1) & &\\
&\ddots&\\
&&\partial\Pi_\mathcal{D}(\xi_n)
\end{array}\right),
\end{align}
which means that we just need to focus on $\delta_\mathcal{D}^*(\cdot)$, $\Pi_\mathcal{D}(\cdot)$ and $\partial \Pi_\mathcal{D}(\cdot)$ for each of the $\mathcal{D}$'s defined in Proposition \ref{prop:defD}. We summarize the results in Table \ref{table_conjugate} - Table \ref{table_jacobian}, and the content on the generalized Jacobian follows the idea in \cite{han1997newton,li2017efficient}. The detailed derivation associated with the case when $\mathcal{D}=\{x\in \mathbb{R}^d\mid \|x\|_{1}\leq L\}$ can be found in the Appendix.

{\small
\begin{ThreePartTable}
	\renewcommand{\multirowsetup}{\centering}
	\begin{longtable}{cc}
		\caption{Conjugate function $\delta_\mathcal{D}^*(\cdot)$} \label{table_conjugate}\\[-6pt]
		\hline  
		\specialrule{0.01em}{2pt}{4pt}
		$\mathcal{D}$ &  $\delta_\mathcal{D}^*(x)$   \\
		\specialrule{0.01em}{2pt}{6pt}
		\endfirsthead
		
		\multicolumn{2}{c}{{ \tablename\ \thetable{} -- continued from previous page}} \\
		
		\specialrule{0.01em}{2pt}{4pt}
		$\mathcal{D}$ &  $\delta_\mathcal{D}^*(x)$   \\
		\specialrule{0.01em}{2pt}{6pt}
		\endhead
		
		\hline
		\multicolumn{2}{r}{{Continued on next page}} \\
		\hline \hline
		\endfoot
		
		\specialrule{0.01em}{4pt}{2pt} 
		\hline
		\endlastfoot	
		
		$\mathcal{D}=\{x\in \mathbb{R}^d\mid x_{K_1}\geq 0,x_{K_2}\leq 0\}$ & $\delta_\mathcal{D}^*(x)=\delta_{-}(x_{K_1})+\delta_{+}(x_{K_2})+\delta_{\{0\}}(x_{K_3})$, where $K_3:=\{1,\cdots,d\} \backslash (K_1 \cup K_2)$\\
		\specialrule{0.01em}{4pt}{4pt}
		$\mathcal{D}=\{x\in \mathbb{R}^d\mid L\leq x\leq U\}$ &  $\delta_\mathcal{D}^*(x)= \langle U,\max\{x,0\}\rangle + \langle L,\min\{x,0\}\rangle$\\
		\specialrule{0.01em}{4pt}{4pt}
		$\mathcal{D}=\{x\in \mathbb{R}^d\mid \|x\|_{q}\leq L\}$ & $\delta_\mathcal{D}^*(x)=L\|x\|_p,\quad 1/p+1/q=1$
	\end{longtable}
\end{ThreePartTable}
}

{\small
\begin{center}
	\begin{ThreePartTable}
		\begin{TableNotes}
			\item[\bfseries {\footnotesize Note:} ]  {\footnotesize $P_x={\rm Diag}({\rm sign}(x))\in \mathbb{R}^{d\times d}$, $\Pi_{\Delta_d}(\cdot)$ denotes the projection onto the simplex $\Delta_d=\{x\in \mathbb{R}^d\mid e_d^T x=1,x\geq 0\}$, which can be computed in $O(d\log (d))$ operations.}
		\end{TableNotes}
	\renewcommand{\multirowsetup}{\centering}
	\begin{longtable}{cc}
		\caption{Proximal mapping $\Pi_\mathcal{D}(\cdot)$} \label{table_proximalmapping}\\[-6pt]
		\hline 
		\specialrule{0.01em}{2pt}{4pt}
		$\mathcal{D}$ &  $\Pi_\mathcal{D}(\cdot)$   \\
		\specialrule{0.01em}{2pt}{6pt}
		\endfirsthead
		
		\multicolumn{2}{c}{{ \tablename\ \thetable{} -- continued from previous page}} \\
		
		\specialrule{0.01em}{2pt}{4pt}
		$\mathcal{D}$ &  $\Pi_\mathcal{D}(\cdot)$   \\
		\specialrule{0.01em}{2pt}{6pt}
		\endhead
		
		\hline
		\multicolumn{2}{r}{{Continued on next page}} \\
		\hline \hline
		\endfoot
		
		\specialrule{0.01em}{4pt}{2pt}
		\hline
		\insertTableNotes
		\endlastfoot
		
		$\mathcal{D}=\{x\in \mathbb{R}^d\mid x_{K_1}\geq 0,x_{K_2}\leq 0\}$ & $(\Pi_\mathcal{D}(x))_i=\left\{\begin{aligned}
		&0 && \mbox{if}\ i\in K_1, x_{i}< 0, \mbox{ or }i\in K_2,x_i>0\\
		&x_i && \mbox{otherwise}
		\end{aligned}\right.$ \\
		\specialrule{0.01em}{4pt}{4pt}
		$\mathcal{D}=\{x\in \mathbb{R}^d\mid L\leq x\leq U\}$ &  $(\Pi_\mathcal{D}(x))_i=\left\{\begin{aligned}
		&x_i && \mbox{if}\ L_i\leq x_i\leq U_i\\
		&0 && \mbox{otherwise}
		\end{aligned}\right.$\\
		\specialrule{0.01em}{4pt}{4pt}
		$\mathcal{D}=\{x\in \mathbb{R}^d\mid \|x\|_{\infty}\leq L\}$ & $(\Pi_\mathcal{D}(x))_i=\left\{\begin{aligned}
		&x_i && \mbox{if}\ |x_i|\leq L \\
		&{\rm sign}(x_i)L && \mbox{if}\ |x_i|> L
		\end{aligned}\right.$\\
		\specialrule{0.01em}{4pt}{4pt}
		$\mathcal{D}=\{x\in \mathbb{R}^d\mid \|x\|_{2}\leq L\}$ & $\Pi_\mathcal{D}(x)=\left\{\begin{aligned}
		&x && \mbox{if}\ \|x\|_2\leq L \\
		&L\frac{x}{\|x\|_2} && \mbox{otherwise}
		\end{aligned}\right.$\\
		\specialrule{0.01em}{4pt}{4pt}
		$\mathcal{D}=\{x\in \mathbb{R}^d\mid \|x\|_{1}\leq L\}$ & $\Pi_\mathcal{D}(x)=\left\{\begin{aligned}
		&x && \mbox{if}\ \|x\|_1\leq L \\
		&L P_x \Pi_{\Delta_d}(P_x x/L)&& \mbox{otherwise}
		\end{aligned}\right.$
	\end{longtable}
\end{ThreePartTable}
\end{center}

\begin{center}
	\begin{ThreePartTable}
		\begin{TableNotes}
			\item[\bfseries {\footnotesize Note:} ]  {\footnotesize $\widetilde{H}={\rm Diag}(r)-\frac{1}{{\rm nnz}(r)}rr^T\in \partial\Pi_{\Delta_d}(x)$, where $r\in \mathbb{R}^d$ is defined as $r_i=1$ if $\big(\Pi_{\Delta_d}(P_x x/L)\big)_i\neq 0 $, and $r_i=0$ otherwise.}
		\end{TableNotes}
		\renewcommand{\multirowsetup}{\centering}
		\begin{longtable}{cc}
			\caption{Generalized Jacobian of $\Pi_\mathcal{D}(\cdot)$} \label{table_jacobian}\\[-6pt]
			\hline 
			\specialrule{0.01em}{2pt}{4pt}
			$\mathcal{D}$ &  $\partial\Pi_\mathcal{D}(\cdot)$   \\
			\specialrule{0.01em}{2pt}{6pt}
			\endfirsthead

			\multicolumn{2}{c}{{ \tablename\ \thetable{} -- continued from previous page}}
			\\
			
			\hline
			\specialrule{0.01em}{2pt}{4pt}
			$\mathcal{D}$ &  $\partial\Pi_\mathcal{D}(\cdot)$   \\
			\specialrule{0.01em}{2pt}{6pt}
			\endhead
			
			\hline
			\multicolumn{2}{r}{{Continued on next page}}\\
			\hline \hline
			\endfoot
			
			\specialrule{0.01em}{4pt}{2pt}
			\hline
			\insertTableNotes
			\endlastfoot
			
			$\mathcal{D}=\{x\in \mathbb{R}^d\mid x_{K_1}\geq 0,x_{K_2}\leq 0\}$ & $\partial \Pi_\mathcal{D}(x)= {\rm Diag}(u),\ u_i\in \left\{\begin{aligned}
			&\{0\} && \mbox{if}\ i\in K_1, x_{i}< 0, \mbox{ or }i\in K_2,x_i>0 \\
			&[0,1] && \mbox{if}\ i\in K_1\cup K_2,x_i=0\\
			&\{1\} && \mbox{otherwise}
			\end{aligned}\right.$ \\
			\specialrule{0.01em}{4pt}{4pt}
			$\mathcal{D}=\{x\in \mathbb{R}^d\mid L\leq x\leq U\}$ &  $\partial \Pi_\mathcal{D}(x)= {\rm Diag}(u),\  u_i\in \left\{\begin{aligned}
			&\{1\} && \mbox{if}\ L_i<x_i<U_i \\
			&[0,1] && \mbox{if}\ x_i=L_i \mbox{ or }x_i=U_i \\[2pt]
			&\{0\} && \mbox{otherwise}
			\end{aligned}\right.$\\
			\specialrule{0.01em}{4pt}{4pt}
			$\mathcal{D}=\{x\in \mathbb{R}^d\mid \|x\|_{\infty}\leq L\}$ & $\partial\Pi_\mathcal{D}(x)={\rm Diag}(u),\ u_i\in\left\{\begin{aligned}
			&\{1\} && \mbox{if}\ |x_i|<L \\
			&[0,1] && \mbox{if}\ |x_i|=L \\
			&\{0\} && \mbox{otherwise}
			\end{aligned}\right.$\\
			\specialrule{0.01em}{4pt}{4pt}
			$\mathcal{D}=\{x\in \mathbb{R}^d\mid \|x\|_{2}\leq L\}$ & $\partial\Pi_\mathcal{D}(x)=\left\{\begin{aligned}
			&\{I_d\} && \mbox{if}\ \|x\|_2< L \\
			&\Big\{I_d-t\frac{xx^T}{L^2}\mid 0\leq t\leq 1\Big\} && \mbox{if}\ \|x\|_2= L\\
			&\Big\{\frac{L}{\|x\|_2}(I_d-\frac{xx^T}{\|x\|_2^2})\Big\} && \mbox{otherwise}
			\end{aligned}\right.$\\
			\specialrule{0.01em}{4pt}{4pt}
			$\mathcal{D}=\{x\in \mathbb{R}^d\mid \|x\|_{1}\leq L\}$ & $H\in \partial \Pi_\mathcal{D}(x),\ \mbox{where } H=\left\{\begin{aligned}
			&I_d && \mbox{if}\ \|x\|_1\leq L \\
			&P_x \widetilde{H} P_x && \mbox{otherwise}
			\end{aligned}\right.$
		\end{longtable}
	\end{ThreePartTable}
\end{center}
}

\subsection{Finding a computable element in $\hat{\partial}^2\Phi(\theta,\xi)$}\label{subsec:hessian}
The most difficult part of the {\tt pALM} algorithm is solving the Newton system \eqref{eq: cg-system}. For efficient practical implementation, we need to find an efficiently computable element in $\hat{\partial}^2\Phi(\theta,\xi)$ for any given $(\theta,\xi)\in\mathbb{R}^n\times \mathbb{R}^{dn}$. From the definition of $\hat{\partial}^2\Phi(\theta,\xi)$, we can rewrite it as
\begin{align*}
\hat{\partial}^2\Phi(\theta,\xi) = \mathcal{M}_1(\theta,\xi)+\mathcal{M}_2(\xi),
\end{align*}
where
\begin{align*}
\mathcal{M}_1(\theta,\xi) &= \sigma \left(\begin{aligned}
\mathcal{A}^*\\
\mathcal{B}^*
\end{aligned}\right)\Big[I-\partial \Pi_{-}(-\mathcal{A}\theta-\mathcal{B}\xi+\frac{\tilde{u}}{\sigma})\Big]\left(\begin{aligned}
&\mathcal{A} & \mathcal{B}
\end{aligned}\right),\\
\mathcal{M}_2(\xi) &= \left(\begin{aligned}
I_n+&\frac{1}{\sigma}H_1\\
&0
\end{aligned}\right.
\left.\begin{aligned}
&0\\
\sigma \Big[ I-\partial{\rm Prox}_p&(\xi-\frac{\tilde{v}}{\sigma})\Big]+\frac{1}{\sigma}H_2
\end{aligned}\right).
\end{align*}
Based on our discussion on $\partial {\rm Prox}_p(\cdot)$ in \eqref{eq:pfunction} and $\partial \Pi_\mathcal{D}(\cdot)$ in Table \ref{table_jacobian}, we can see that the elements in $\partial {\rm Prox}_p(\cdot)$ are block diagonal matrices. In order to maintain the block diagonal structure, we choose $H_1$ and $H_2$ to be diagonal matrices, and hence the elements in $\mathcal{M}_2(\xi)$ for any $\xi\in \mathbb{R}^{dn}$ will also be block diagonal matrices. One can easily pick an element in $\mathcal{M}_2(\xi)$ by choosing an element in $\partial{\rm Prox}_p(\xi-\tilde{v}/\sigma)$.
For $\mathcal{M}_1(\theta,\xi)$, to make full use of the second-order sparsity structure, we choose an element $W$ in $\partial \Pi_{-}(-\mathcal{A}\theta-\mathcal{B}\xi+\frac{\tilde{u}}{\sigma})$, where
\begin{align*}
W_{ij} = \left\{\begin{aligned}
&1 && \mbox{if}\ (-\mathcal{A}\theta-\mathcal{B}\xi+\frac{\tilde{u}}{\sigma})_{ij}\leq 0, \\
&0 && \mbox{otherwise}.
\end{aligned}\right.
\end{align*}
By denoting the $0$-$1$ matrix $I_{n}-W$ as $\bar{W}$, then
\begin{align*}
P:=\sigma \left(\begin{aligned}
\mathcal{A}^*\\
\mathcal{B}^*
\end{aligned}\right)\bar{W}\left(\begin{aligned}
&\mathcal{A} & \mathcal{B}
\end{aligned}\right) =\sigma\left(\begin{aligned}
&\mathcal{A}^* \bar{W}\mathcal{A} && \mathcal{A}^*\bar{W}\mathcal{B}\\
&\mathcal{B}^*\bar{W}\mathcal{A} && \mathcal{B}^* \bar{W}\mathcal{B}
\end{aligned}\right)
\end{align*}
is an element in $\mathcal{M}_1(\theta,\xi)$. After some algebraic manipulations by making use of the structure of $\mathcal{A}$ and $\mathcal{B}$, we can prove the following results:
\begin{align*}
\mathcal{A}^* \bar{W}\mathcal{A} &= {\rm Diag}(\bar{W}e_n)+{\rm Diag}(\bar{W}^T e_n)-\bar{W}-\bar{W}^T,\\
\mathcal{A}^*\bar{W}\mathcal{B}&=\Big({\rm Diag}(\bar{W}_1)B_1,\cdots,{\rm Diag}(\bar{W}_n)B_n \Big)-\left(\begin{array}{ccc}
\bar{W}_1^T B_1 & &\\
&\ddots&\\
&&\bar{W}_n^T B_n
\end{array}\right),\\
\mathcal{B}^* \bar{W}\mathcal{B}&=\left(\begin{array}{ccc}
B_1^T {\rm Diag}(\bar{W}_1)B_1& &\\
&\ddots&\\
&&B_n^T {\rm Diag}(\bar{W}_n)B_n
\end{array}\right).
\end{align*}
It can be seen that the $0$-$1$ structure of $\bar{W}$ will reduce many operations in matrix-matrix multiplications, and hence highly reduce the computational cost for $P$. For all $i$, ${\rm Diag}(\bar{W}_i)B_i$ is a matrix in $\mathbb{R}^{n\times d}$, with its $j$-th row being the $j$-th row of $B_i$ if $(\bar{W}_i)_j=1$, or the zero vector if $(\bar{W}_i)_j=0$. The computation of $\bar{W}_i^T B_i$ can be obtained by summing the non-zero rows of ${\rm Diag}(\bar{W}_i)B_i$, and the computation of $B_i^T {\rm Diag}(\bar{W}_i)B_i=({\rm Diag}(\bar{W}_i)B_i)^T({\rm Diag}(\bar{W}_i)B_i)$ can be highly reduced in the same way. 

The special structure of the elements in $\hat{\partial}^2\Phi(\theta,\xi)$ makes it possible for us to apply the second-order type {\tt pALM} algorithm for the huge constrained quadratic programming problem \eqref{reformulated_problem}, which contains $n(d+1)$ variables, $n(n-1)$ linear inequality constraints and $n$ possibly non-polyhedral constraints. 

\section{Numerical experiments}\label{sec:numerical}
In this section, we conduct some numerical experiments to demonstrate the performance of the {\tt sGS-ADMM} algorithm and the {\tt pALM} algorithm for solving \eqref{reformulated_problem}, under each case of $\mathcal{D}$ mentioned in Proposition \ref{prop:defD}. In addition, we design a data-driven Lipschitz estimation method to deal with the boundary effect of the convex regression problem. All our computational results are obtained by running MATLAB on a windows workstation (12-core, Intel Xeon E5-2680 @ 2.50GHz, 128 G RAM). 

\subsection{Computational performance of the algorithms for solving \eqref{reformulated_problem}}\label{sec:numericalcomparison}
In this subsection, we compare the performance of the {\tt sGS-ADMM} algorithm, the {\tt pALM} algorithm and {\tt MOSEK} for increasing $d$ and $n$. As for the three-block {\tt ADMM} proposed in \cite{mazumder2019computational}, we know that the {\tt sGS-ADMM} algorithm has been demonstrated to perform better than the possibly nonconvergent directly extended multi-block {\tt ADMM}. As we can see in \cite{aybat2014parallel}, as long as there is enough memory, {\tt MOSEK} performs quite a lot better than the parallel proximal gradient method ({\tt PAPG}). Since there is enough memory on our workstation, we just compare our algorithms with the state-of-the-art algorithm {\tt MOSEK}. 

\paragraph{Stopping criteria.}
We measure the infeasibilities for the primal and dual problems \eqref{reformulated_problem} and \eqref{D} by $R_{P},R_{D}$, and the complementary conditions by $R_{C}$, where
\begin{align*}
R_{\rm P} :=& \max\Big\{\frac{\|\xi-{\rm Prox}_p(\xi)\|}{1+\|\xi\|},\ \frac{\|\Pi_{-}(\mathcal{A}\theta+\mathcal{B}\xi)\|}{1+\|\mathcal{A}\theta\|+\|\mathcal{B}\xi\|}\Big\}, \\
R_{\rm D} :=& \max\Big\{\frac{\|\theta-Y-\mathcal{A}^*u\|}{1+\|Y\|+\|\theta\|+\|u\|},\ \frac{\|\mathcal{B}^*u+v\|}{1+\|u\|+\|v\|}\Big\}, \\
R_{\rm C} :=& \max\Big\{\frac{\|\xi-{\rm Prox}_p(\xi-v)\|}{1+\|\xi\|+\|v\|},\ 
\frac{\|\mathcal{A}\theta+\mathcal{B}\xi-\Pi_{+}(\mathcal{A}\theta+\mathcal{B}\xi-u)\|}{1+\|\mathcal{A}\theta\|+\|\mathcal{B}\xi\|+\|u\|}\Big\}.
\end{align*}
We stop the algorithm when
\begin{align*}
R_{\rm KKT}:=\max\{R_{\rm P},R_{\rm D},R_{\rm C}\}\leq \epsilon,
\end{align*}
where $\epsilon=10^{-6}$ is a given tolerance. In addition,  the algorithm will be stopped when it reaches the maximum computation time of $2$ hours or the pre-set maximum number of iterations ($200$ for {\tt pALM}, and $10000$ for {\tt sGS-ADMM}). 

\paragraph{Construction of synthetic datasets.}
For a given convex function $\psi:\mathbb{R}^d\rightarrow \mathbb{R}$, the synthetic dataset is generated via the procedure in \cite{mazumder2019computational}. We first generate $n$ samples $X_i\in \mathbb{R}^d$, $i=1,\cdots,n$ uniformly from $[-1,1]^d$, then the corresponding responses are given as
\begin{align*}
Y_i=\psi(X_i)+\varepsilon_i.
\end{align*}
The error $\varepsilon$ follows the normal distribution $\mathcal{N}(0,\sigma^2 I_n)$, where $\sigma^2 = {\rm Var}(\{\psi(X_i)\}_{i=1}^n)/{\rm SNR}$. In the experiments, we take ${\rm SNR}=3$.

\paragraph{Data preprocessing.}
Before we run the algorithms for the data $X=(X_1,\cdots,X_n)\in \mathbb{R}^{d\times n}$ and $Y\in\mathbb{R}^n$, we process the data so as to build a more predictive model. For the response $Y$ and each row of the predictor $X$, we mean center the vector and then standardize it to have unit $\ell_2$-norm.

\paragraph{Numerical results.}
The numerical results on the comparison among {\tt pALM}, {\tt sGS-ADMM} and {\tt MOSEK} can be found in Table \ref{table_org} - Table \ref{table_1norm}. We conduct experiments on the unconstrained convex regression problem and each case of shape-constrained convex regression we mentioned before. In Algorithm {\tt pALM}, we choose $H_1=10^{-3}I_{n}$ and $H_2=10^{-3}I_{dn}$. All the test functions are convex on $\mathbb{R}^d$ and satisfy some specified shape constraints. As one can see from the tables, both {\tt sGS-ADMM} and {\tt pALM} outperform the state-of-the-art solver {\tt MOSEK}. To be specified, when estimating the function $\psi(x)=\exp(p^T x)$ for the moderate $(d,n)=(100,1000)$, {\tt sGS-ADMM} is about $6$ times faster than {\tt MOSEK}, and {\tt pALM} is about $19$ times faster than {\tt MOSEK}. For the case when $d=200$, $n=3000$, which is a large problem with $603,000$ variables and about $9,000,000$ inequality constraints, {\tt MOSEK} runs out of memory, while {\tt pALM} could solve it within $3$ minutes and {\tt sGS-ADMM} takes $10$ minutes. From the tables, we can see that {\tt sGS-ADMM} performs much better than {\tt MOSEK} in each instance, and {\tt pALM} performs even better than {\tt sGS-ADMM}. In most of the cases, {\tt pALM} is at least $10$ times faster than {\tt MOSEK}.

{\small
	\begin{ThreePartTable}
		\begin{TableNotes}
			\item[\bfseries {\footnotesize Note:} ]  {\footnotesize ``16(20)" means ``{\tt pALM} iterations (total inner {\tt SSN} iterations)". O.M. means the algorithm runs out of memory. Time is in the format of hours:minutes:seconds.}
		\end{TableNotes}
		\renewcommand{\multirowsetup}{\centering}
		\begin{longtable}{ccccccccc}
			\caption{Convex regression for test function $\psi(x)=\exp(p^T x)$, where $p$ is a given random vector with each coordinate drawn
			from the standard normal distribution.} \label{table_org}\\[-6pt]
			\hline  
			\specialrule{0.01em}{1pt}{1pt}
			\multicolumn{2}{c}{
				\tikz{
					\node[below left, inner sep=-1pt] (def) {Algorithm};
					\node[above right,inner sep=0.5pt] (abc) {$(d,n)$};
					\draw (def.north west|-abc.north west) -- (def.south east-|abc.south east);
			}}
			& $(50,500)$ & $(50,1000)$ &  $(100,1000)$  & $(100,2000)$ & $(200,2000)$ & $(200,3000)$\\
			\specialrule{0.01em}{0pt}{2pt}
			\endfirsthead
			
			\multicolumn{8}{c}{{ \tablename\ \thetable{} -- continued from previous page}} \\
			
			\specialrule{0.01em}{1pt}{1pt}
			\multicolumn{2}{c}{
				\tikz{
					\node[below left, inner sep=-1pt] (def) {Algorithm};
					\node[above right,inner sep=0.5pt] (abc) {$(d,n)$};
					\draw (def.north west|-abc.north west) -- (def.south east-|abc.south east);
			}}
			& $(50,500)$ & $(50,1000)$ &  $(100,1000)$  & $(100,2000)$ & $(200,2000)$ & $(200,3000)$\\
			\specialrule{0.01em}{0pt}{2pt}
			\endhead
			
			\hline
			\multicolumn{8}{r}{{Continued on next page}} \\
			\hline \hline
			\endfoot
			
			\specialrule{0.01em}{0pt}{1pt} 
			\hline
			\insertTableNotes
			\endlastfoot	
			
			\multirow{3}*{\tabincell{c}{\tt pALM}} 
			& Iteration & 14(13) & 16(20) & 15(21) & 18(34) & 16(27) & 18(35)\\
			& Time & 00:00:02 & 00:00:06 & 00:00:10 & 00:01:05 & 00:01:01 & 00:02:49\\
			& $R_{\rm KKT}$ & 2.61e-9  & 1.43e-9 & 1.16e-8 & 1.91e-7 & 3.02e-7 & 4.89e-7 \\
			\specialrule{0.01em}{1pt}{1pt}
			\multirow{3}*{\tabincell{c}{\tt sGS-ADMM} } 	
			& Iteration & 389 & 562 & 411 & 719 & 447 & 563\\
			& Time & 00:00:08 & 00:00:38 & 00:00:31 & 00:04:47 & 00:03:26 & 00:09:39 \\
			& $R_{\rm KKT}$ & 9.95e-7  & 9.88e-7 & 9.90e-7 & 8.44e-7 & 9.91e-7 & 9.92e-7 \\
			\specialrule{0.01em}{1pt}{1pt}
			\multirow{3}*{\tabincell{c}{\tt MOSEK} } 	
			& Iteration & 10 & 11 & 11 & 10 & 10 & O.M.\\
			& Time & 00:00:19 & 00:01:47 & 00:03:10 & 00:18:50 & 00:37:37 & O.M. \\
			& $R_{\rm KKT}$ & 6.59e-9  & 3.92e-9 & 2.76e-10 & 1.01e-7 & 3.27e-9 & O.M. \\
		\end{longtable}
	\end{ThreePartTable}
}

{\small
	\begin{ThreePartTable}
		\renewcommand{\multirowsetup}{\centering}
		\begin{longtable}{ccccccccc}
			\caption{Convex regression with monotone constraint (non-decreasing) for the test function $\psi(x)=(e_d^T x)_{+}$.} \label{table_nondecreasing}\\[-6pt]
			\hline  
			\specialrule{0.01em}{1pt}{1pt}
			\multicolumn{2}{c}{
				\tikz{
					\node[below left, inner sep=-1pt] (def) {Algorithm};
					\node[above right,inner sep=0.5pt] (abc) {$(d,n)$};
					\draw (def.north west|-abc.north west) -- (def.south east-|abc.south east);
			}}
			& $(50,500)$ & $(50,1000)$ &  $(100,1000)$  & $(100,2000)$ & $(200,2000)$ & $(200,3000)$\\
			\specialrule{0.01em}{0pt}{2pt}
			\endfirsthead
			
			\multicolumn{8}{c}{{ \tablename\ \thetable{} -- continued from previous page}} \\
			
			\specialrule{0.01em}{1pt}{1pt}
			\multicolumn{2}{c}{
				\tikz{
					\node[below left, inner sep=-1pt] (def) {Algorithm};
					\node[above right,inner sep=0.5pt] (abc) {$(d,n)$};
					\draw (def.north west|-abc.north west) -- (def.south east-|abc.south east);
			}}
			& $(50,500)$ & $(50,1000)$ &  $(100,1000)$  & $(100,2000)$ & $(200,2000)$ & $(200,3000)$\\
			\specialrule{0.01em}{0pt}{2pt}
			\endhead
			
			\hline
			\multicolumn{8}{r}{{Continued on next page}} \\
			\hline \hline
			\endfoot
			
			\specialrule{0.01em}{0pt}{1pt} 
			\hline
			\endlastfoot	
			
			\multirow{3}*{\tabincell{c}{\tt pALM}} 
			& Iteration & 17(19) & 17(23) & 17(24) & 20(63) & 18(41) & 20(56)\\
			& Time & 00:00:02 & 00:00:08 & 00:00:13 & 00:02:48 & 00:01:59 & 00:05:49\\
			& $R_{\rm KKT}$ & 9.66e-9  & 1.50e-9 & 8.24e-8 & 2.90e-7 & 2.35e-7 & 7.69e-7 \\
			\specialrule{0.01em}{1pt}{1pt}
			\multirow{3}*{\tabincell{c}{\tt sGS-ADMM} } 	
			& Iteration & 529 & 917 & 481 & 960 & 541 & 716\\
			& Time & 00:00:12 & 00:01:07 & 00:00:39 & 00:06:46 & 00:04:28 & 00:13:04 \\
			& $R_{\rm KKT}$ & 8.04e-7  & 9.96e-7 & 9.92e-7 & 7.87e-7 & 9.19e-7 & 9.88e-7 \\
			\specialrule{0.01em}{1pt}{1pt}
			\multirow{3}*{\tabincell{c}{\tt MOSEK} } 	
			& Iteration & 14 & 13 & 13 & 17 & 12 & O.M.\\
			& Time & 00:00:24 & 00:01:58 & 00:03:34 & 00:25:28 & 00:43:34 & O.M. \\
			& $R_{\rm KKT}$ & 1.54e-9  & 1.45e-9 & 4.65e-7 & 8.99e-11 & 4.35e-7 & O.M. \\
		\end{longtable}
	\end{ThreePartTable}
}

{\small
	\begin{ThreePartTable}
		\renewcommand{\multirowsetup}{\centering}
		\begin{longtable}{ccccccccc}
			\caption{Convex regression with box constraint ($L=0_d$, $U=e_d$) for the test function $\psi(x)=\ln(1+\exp(e_d^Tx))$.} \label{table_twobound}\\[-6pt]
			\hline  
			\specialrule{0.01em}{1pt}{1pt}
			\multicolumn{2}{c}{
				\tikz{
					\node[below left, inner sep=-1pt] (def) {Algorithm};
					\node[above right,inner sep=0.5pt] (abc) {$(d,n)$};
					\draw (def.north west|-abc.north west) -- (def.south east-|abc.south east);
			}}
			& $(50,500)$ & $(50,1000)$ &  $(100,1000)$  & $(100,2000)$ & $(200,2000)$ & $(200,3000)$\\
			\specialrule{0.01em}{0pt}{2pt}
			\endfirsthead
			
			\multicolumn{8}{c}{{ \tablename\ \thetable{} -- continued from previous page}} \\
			
			\specialrule{0.01em}{1pt}{1pt}
			\multicolumn{2}{c}{
				\tikz{
					\node[below left, inner sep=-1pt] (def) {Algorithm};
					\node[above right,inner sep=0.5pt] (abc) {$(d,n)$};
					\draw (def.north west|-abc.north west) -- (def.south east-|abc.south east);
			}}
			& $(50,500)$ & $(50,1000)$ &  $(100,1000)$  & $(100,2000)$ & $(200,2000)$ & $(200,3000)$\\
			\specialrule{0.01em}{0pt}{2pt}
			\endhead
			
			\hline
			\multicolumn{8}{r}{{Continued on next page}} \\
			\hline \hline
			\endfoot
			
			\specialrule{0.01em}{0pt}{1pt} 
			\hline
			\endlastfoot	
			
			\multirow{3}*{\tabincell{c}{\tt pALM}} 
			& Iteration & 25(42) & 24(65) & 18(25) & 19(55) & 17(37) & 20(57)\\
			& Time & 00:00:04 & 00:00:23 & 00:00:14 & 00:02:32 & 00:01:50 & 00:06:03\\
			& $R_{\rm KKT}$ & 1.73e-7  & 1.63e-7 & 3.67e-8 & 4.77e-7 & 1.36e-7 & 7.39e-7 \\
			\specialrule{0.01em}{1pt}{1pt}
			\multirow{3}*{\tabincell{c}{\tt sGS-ADMM} } 	
			& Iteration & 663 & 1016 & 473 & 935 & 546 & 715\\
			& Time & 00:00:16 & 00:01:17 & 00:00:41 & 00:06:43 & 00:04:33 & 00:13:04 \\
			& $R_{\rm KKT}$ & 9.60e-7  & 7.47e-7 & 7.46e-7 & 8.83e-7 & 9.69e-7 & 9.95e-7 \\
			\specialrule{0.01em}{1pt}{1pt}
			\multirow{3}*{\tabincell{c}{\tt MOSEK} } 	
			& Iteration & 19 & 24 & 15 & 16 & 16 & O.M.\\
			& Time & 00:00:30 & 00:02:54 & 00:04:15 & 00:26:11 & 00:57:01 & O.M. \\
			& $R_{\rm KKT}$ & 9.95e-7  & 6.03e-8 & 2.06e-8 & 3.20e-10 & 3.94e-10 & O.M. \\
		\end{longtable}
	\end{ThreePartTable}
}

{\small
	\begin{ThreePartTable}
		\renewcommand{\multirowsetup}{\centering}
		\begin{longtable}{ccccccccc}
			\caption{Convex regression with Lipschitz constraint ($p=1$, $q=\infty$, $L=1$) for the test function $\psi(x)=\sqrt{1+x^Tx}$.} \label{table_infnorm}\\[-6pt]
			\hline  
			\specialrule{0.01em}{1pt}{1pt}
			\multicolumn{2}{c}{
				\tikz{
					\node[below left, inner sep=-1pt] (def) {Algorithm};
					\node[above right,inner sep=0.5pt] (abc) {$(d,n)$};
					\draw (def.north west|-abc.north west) -- (def.south east-|abc.south east);
			}}
			& $(50,500)$ & $(50,1000)$ &  $(100,1000)$  & $(100,2000)$ & $(200,2000)$ & $(200,3000)$\\
			\specialrule{0.01em}{0pt}{2pt}
			\endfirsthead
			
			\multicolumn{8}{c}{{ \tablename\ \thetable{} -- continued from previous page}} \\
			
			\specialrule{0.01em}{1pt}{1pt}
			\multicolumn{2}{c}{
				\tikz{
					\node[below left, inner sep=-1pt] (def) {Algorithm};
					\node[above right,inner sep=0.5pt] (abc) {$(d,n)$};
					\draw (def.north west|-abc.north west) -- (def.south east-|abc.south east);
			}}
			& $(50,500)$ & $(50,1000)$ &  $(100,1000)$  & $(100,2000)$ & $(200,2000)$ & $(200,3000)$\\
			\specialrule{0.01em}{0pt}{2pt}
			\endhead
			
			\hline
			\multicolumn{8}{r}{{Continued on next page}} \\
			\hline \hline
			\endfoot
			
			\specialrule{0.01em}{0pt}{1pt} 
			\hline
			\endlastfoot	
			
			\multirow{3}*{\tabincell{c}{\tt pALM}} 
			& Iteration & 15(16) & 17(30) & 15(20) & 18(38) & 18(36) & 21(43)\\
			& Time & 00:00:02 & 00:00:10 & 00:00:11 & 00:01:24 & 00:01:36 & 00:04:00\\
			& $R_{\rm KKT}$ & 1.01e-11  & 5.67e-8 & 1.52e-8 & 6.70e-7 & 4.05e-7 & 8.93e-7 \\
			\specialrule{0.01em}{1pt}{1pt}
			\multirow{3}*{\tabincell{c}{\tt sGS-ADMM} } 	
			& Iteration & 531 & 928 & 500 & 921 & 538 & 748\\
			& Time & 00:00:13 & 00:01:11 & 00:00:44 & 00:06:46 & 00:04:26 & 00:15:37 \\
			& $R_{\rm KKT}$ & 9.63e-7  & 9.97e-7 & 9.27e-7 & 9.97e-7 & 9.04e-7 & 8.63e-7 \\
			\specialrule{0.01em}{1pt}{1pt}
			\multirow{3}*{\tabincell{c}{\tt MOSEK} } 	
			& Iteration & 10 & 11 & 10 & 11 & 11 & O.M.\\
			& Time & 00:00:22 & 00:01:55 & 00:03:30 & 00:21:23 & 00:46:15 & O.M. \\
			& $R_{\rm KKT}$ & 7.51e-9  & 3.46e-10 & 3.81e-13 & 5.15e-10 & 2.86e-15 & O.M. \\
		\end{longtable}
	\end{ThreePartTable}
}

{\small
	\begin{ThreePartTable}
		\begin{TableNotes}
			\item[\bfseries {\footnotesize Note:} ]  {\footnotesize $Q\in \mathbb{R}^{d\times d}$ is a randomly generated positive definite matrix with known largest eigenvalue.}
		\end{TableNotes}
		\renewcommand{\multirowsetup}{\centering}
		\begin{longtable}{ccccccccc}
			\caption{Convex regression with Lipschitz constraint ($p=2$, $q=2$, $L=\lambda_{\rm max}(Q)$) for the test function $\psi(x)=\sqrt{x^T Qx}$.} \label{table_2norm}\\[-6pt]
			\hline  
			\specialrule{0.01em}{1pt}{1pt}
			\multicolumn{2}{c}{
				\tikz{
					\node[below left, inner sep=-1pt] (def) {Algorithm};
					\node[above right,inner sep=0.5pt] (abc) {$(d,n)$};
					\draw (def.north west|-abc.north west) -- (def.south east-|abc.south east);
			}}
			& $(50,500)$ & $(50,1000)$ &  $(100,1000)$  & $(100,2000)$ & $(200,2000)$ & $(200,3000)$\\
			\specialrule{0.01em}{0pt}{2pt}
			\endfirsthead
			
			\multicolumn{8}{c}{{ \tablename\ \thetable{} -- continued from previous page}} \\
			
			\specialrule{0.01em}{1pt}{1pt}
			\multicolumn{2}{c}{
				\tikz{
					\node[below left, inner sep=-1pt] (def) {Algorithm};
					\node[above right,inner sep=0.5pt] (abc) {$(d,n)$};
					\draw (def.north west|-abc.north west) -- (def.south east-|abc.south east);
			}}
			& $(50,500)$ & $(50,1000)$ &  $(100,1000)$  & $(100,2000)$ & $(200,2000)$ & $(200,3000)$\\
			\specialrule{0.01em}{0pt}{2pt}
			\endhead
			
			\hline
			\multicolumn{8}{r}{{Continued on next page}} \\
			\hline \hline
			\endfoot
			
			\specialrule{0.01em}{0pt}{1pt} 
			\hline
			\insertTableNotes
			\endlastfoot	
			
			\multirow{3}*{\tabincell{c}{\tt pALM}} 
			& Iteration & 13(12) & 17(30) & 15(20) & 19(42) & 17(35) & 32(59)\\
			& Time & 00:00:02 & 00:00:09 & 00:00:10 & 00:01:39 & 00:01:31 & 00:04:21\\
			& $R_{\rm KKT}$ & 2.05e-8  & 3.52e-8 & 7.52e-9 & 2.49e-7 & 4.84e-7 & 9.95e-7 \\
			\specialrule{0.01em}{1pt}{1pt}
			\multirow{3}*{\tabincell{c}{\tt sGS-ADMM} } 	
			& Iteration & 541 & 953 & 549 & 1005 & 499 & 705\\
			& Time & 00:00:14 & 00:01:19 & 00:00:53 & 00:07:29 & 00:04:18 & 00:13:38 \\
			& $R_{\rm KKT}$ & 9.35e-7  & 9.64e-7 & 7.53e-7 & 8.75e-7 & 8.86e-7 & 9.63e-7 \\
			\specialrule{0.01em}{1pt}{1pt}
			\multirow{3}*{\tabincell{c}{\tt MOSEK} } 	
			& Iteration & 10 & 13 & 11 & 12 & 11 & O.M.\\
			& Time & 00:00:22 & 00:02:04 & 00:03:41 & 00:22:20 & 00:46:19 & O.M. \\
			& $R_{\rm KKT}$ & 2.50e-7  & 1.06e-9 & 4.21e-9 & 3.20e-9 & 1.21e-8 & O.M. \\
		\end{longtable}
	\end{ThreePartTable}
}

{\small
	\begin{ThreePartTable}
		\renewcommand{\multirowsetup}{\centering}
		\begin{longtable}{ccccccccc}
			\caption{Convex regression with Lipschitz constraint ($p=\infty$, $q=1$, $L=1$) for the test function $\psi(x)=\ln(1+e^{x_1}+\cdots+e^{x_d})$.} \label{table_1norm}\\[-6pt]
			\hline  
			\specialrule{0.01em}{1pt}{1pt}
			\multicolumn{2}{c}{
				\tikz{
					\node[below left, inner sep=-1pt] (def) {Algorithm};
					\node[above right,inner sep=0.5pt] (abc) {$(d,n)$};
					\draw (def.north west|-abc.north west) -- (def.south east-|abc.south east);
			}}
			& $(50,500)$ & $(50,1000)$ &  $(100,1000)$  & $(100,2000)$ & $(200,2000)$ & $(200,3000)$\\
			\specialrule{0.01em}{0pt}{2pt}
			\endfirsthead
			
			\multicolumn{8}{c}{{ \tablename\ \thetable{} -- continued from previous page}} \\
			
			\specialrule{0.01em}{1pt}{1pt}
			\multicolumn{2}{c}{
				\tikz{
					\node[below left, inner sep=-1pt] (def) {Algorithm};
					\node[above right,inner sep=0.5pt] (abc) {$(d,n)$};
					\draw (def.north west|-abc.north west) -- (def.south east-|abc.south east);
			}}
			& $(50,500)$ & $(50,1000)$ &  $(100,1000)$  & $(100,2000)$ & $(200,2000)$ & $(200,3000)$\\
			\specialrule{0.01em}{0pt}{2pt}
			\endhead
			
			\hline
			\multicolumn{8}{r}{{Continued on next page}} \\
			\hline \hline
			\endfoot
			
			\specialrule{0.01em}{0pt}{1pt} 
			\hline
			\endlastfoot	
			
			\multirow{3}*{\tabincell{c}{\tt pALM}} 
			& Iteration & 14(14) & 16(22) & 15(20) & 18(40) & 17(36) & 21(41)\\
			& Time & 00:00:02 & 00:00:07 & 00:00:10 & 00:01:23 & 00:01:32 & 00:03:37\\
			& $R_{\rm KKT}$ & 7.99e-12  & 1.29e-8 & 2.13e-8 & 2.25e-7 & 3.85e-7 & 9.78e-7 \\
			\specialrule{0.01em}{1pt}{1pt}
			\multirow{3}*{\tabincell{c}{\tt sGS-ADMM} } 	
			& Iteration & 413 & 767 & 471 & 813 & 462 & 667\\
			& Time & 00:00:11 & 00:01:01 & 00:00:42 & 00:05:55 & 00:03:59 & 00:12:59 \\
			& $R_{\rm KKT}$ & 9.09e-7  & 9.96e-7 & 7.16e-7 & 9.94e-7 & 9.89e-7 & 9.94e-7 \\
			\specialrule{0.01em}{1pt}{1pt}
			\multirow{3}*{\tabincell{c}{\tt MOSEK} } 	
			& Iteration & 12 & 12 & 13 & 8 & 8 & O.M.\\
			& Time & 00:00:40 & 00:03:47 & 00:06:58 & 00:37:26 & 01:02:19 & O.M. \\
			& $R_{\rm KKT}$ & 1.23e-8  & 1.25e-7 & 2.97e-10 & 3.04e-9 & 4.52e-9 & O.M. \\
		\end{longtable}
	\end{ThreePartTable}
}

\subsection{Data-driven Lipschitz estimation method}\label{subsec:Lipschitz}
An important issue in convex regression is over-fitting near the boundary of ${\rm conv}(X_1,\cdots,X_n)$. That is to say, the norms of the fitted subgradients $\xi_i$'s near the boundary can become arbitrarily large. The authors in \cite{lim2014convergence,balazs2015near,mazumder2019computational} used the idea of Lipschitz convex regression to deal with this problem. They propose to compute the least squares estimator over the class of convex functions that are uniformly Lipschitz with a given bound, which means that they compute the estimator defined in \eqref{eq:min_CS} with Property $\mathcal{S}$ taking the form of (S3). In practice, the challenge is in choosing the unknown Lipschitz constant in the model based on the given data. Mazumder et al. \cite{mazumder2019computational} choose to estimate the Lipschitz constant by using the cross-validation. In this paper, we provide a data-driven Lipschitz estimation method for the Lipschitz convex regression. 

For each $X_i$, we first find the $k$-nearest neighbors $\mathcal{N}(X_i)$ of $X_i$, and then define 
\begin{align*}
L_i={\rm median}\Big\{\frac{|Y_i-Y_j|}{\|X_i-X_j\|_p},j\in \mathcal{N}(X_i)\Big\},
\end{align*}
where $p=1,2,\infty$ is given. Then we solve the generalization form of \eqref{convex_LSE_D} as
\begin{align}
&\min_{\theta_1,\ldots,\theta_n\in\mathbb{R};  \xi_1,\ldots,\xi_n \in\mathbb{R}^{d}} 
\frac{1}{2} \sum_{i=1}^n (\theta_i - Y_i)^2 \label{generalization_convexLSE}\\
&{\rm s.t.} \quad \theta_i \geq \theta_j + \langle \xi_j,X_i - X_j\rangle,\quad \forall\ 1\leq i ,j \leq n,\notag\\
&\qquad\ \xi_i\in \mathcal{D}_i,\quad i=1,\cdots,n,\notag
\end{align}
where $\mathcal{D}_i=\{x\in \mathbb{R}^d\mid \|x\|_q\leq L_i\}$ with $1/p+1/q=1$. The proposed {\tt sGS-ADMM} algorithm and {\tt pALM} algorithm can be easily extended to solve \eqref{generalization_convexLSE} by letting $p(\xi)= \sum_{i=1}^n \delta_{\mathcal{D}_i} (\xi_i)$.

We use an example here to demonstrate the performance of the Lipschitz convex regression with the data-driven Lipschitz estimation method. Consider the convex function $\psi(x)=2\|x\|_{\infty}+\|x\|^2$, we sample $n=80$ data points uniformly from $[-1,1]^d$ and add the Gaussian noise as stated in Section \ref{sec:numericalcomparison}. The results for $d=1,2$ can be seen in Figure \ref{fig:data_Lip}. When estimating the Lipschitz constant for each data point, we take $k=5$ and $p=q=2$. As shown in the figure, Lipschitz convex regression does benefit the performance of the regression near the boundary of the convex hull of $X_i$'s.

\begin{figure}[H]
	\subfigure[$d=1$]{\label{fig:toy1}
		\includegraphics[width=0.5\linewidth]{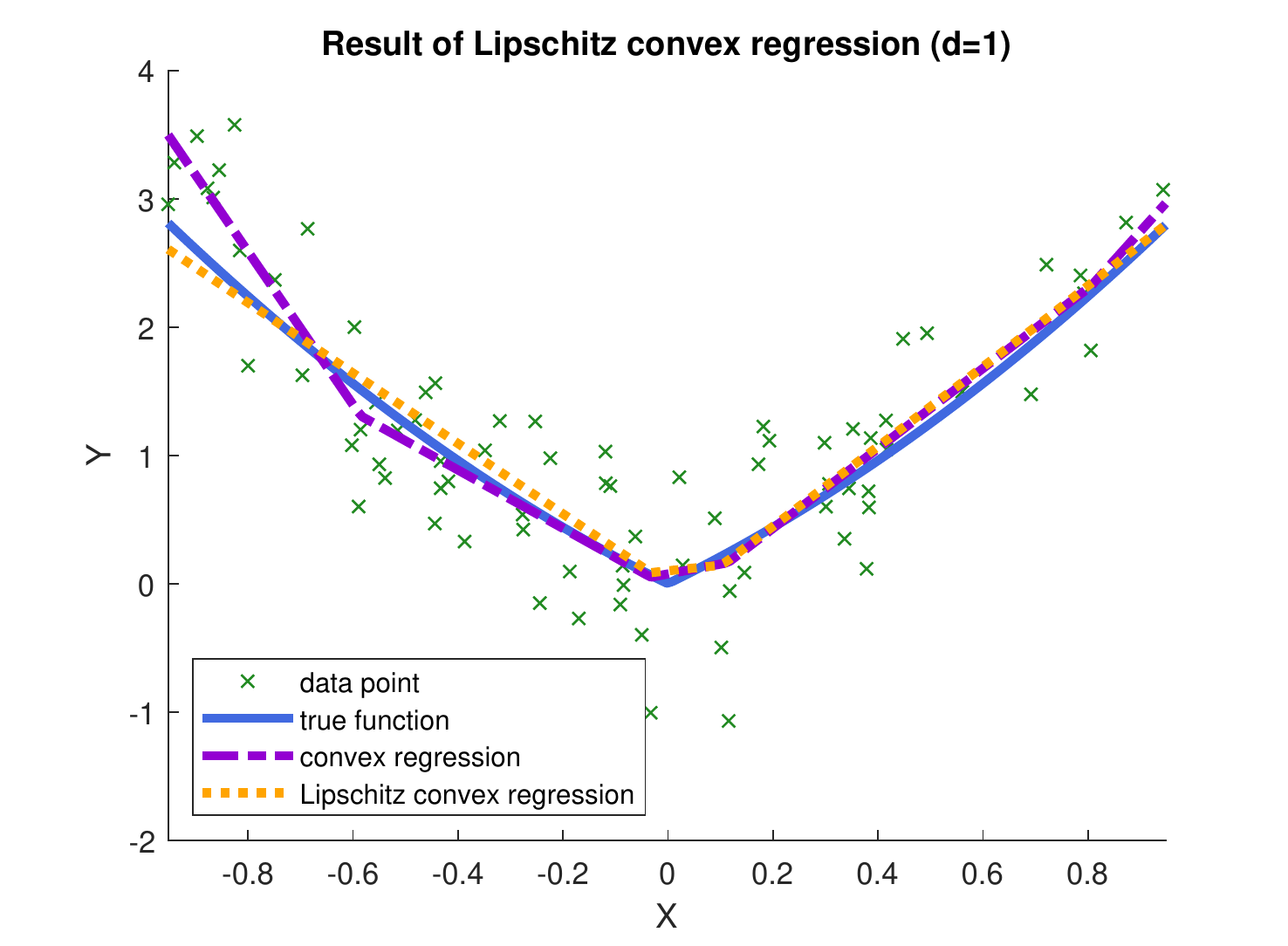}}
	\subfigure[$d=2$]{\label{fig:toy2}
		\includegraphics[width=0.5\linewidth]{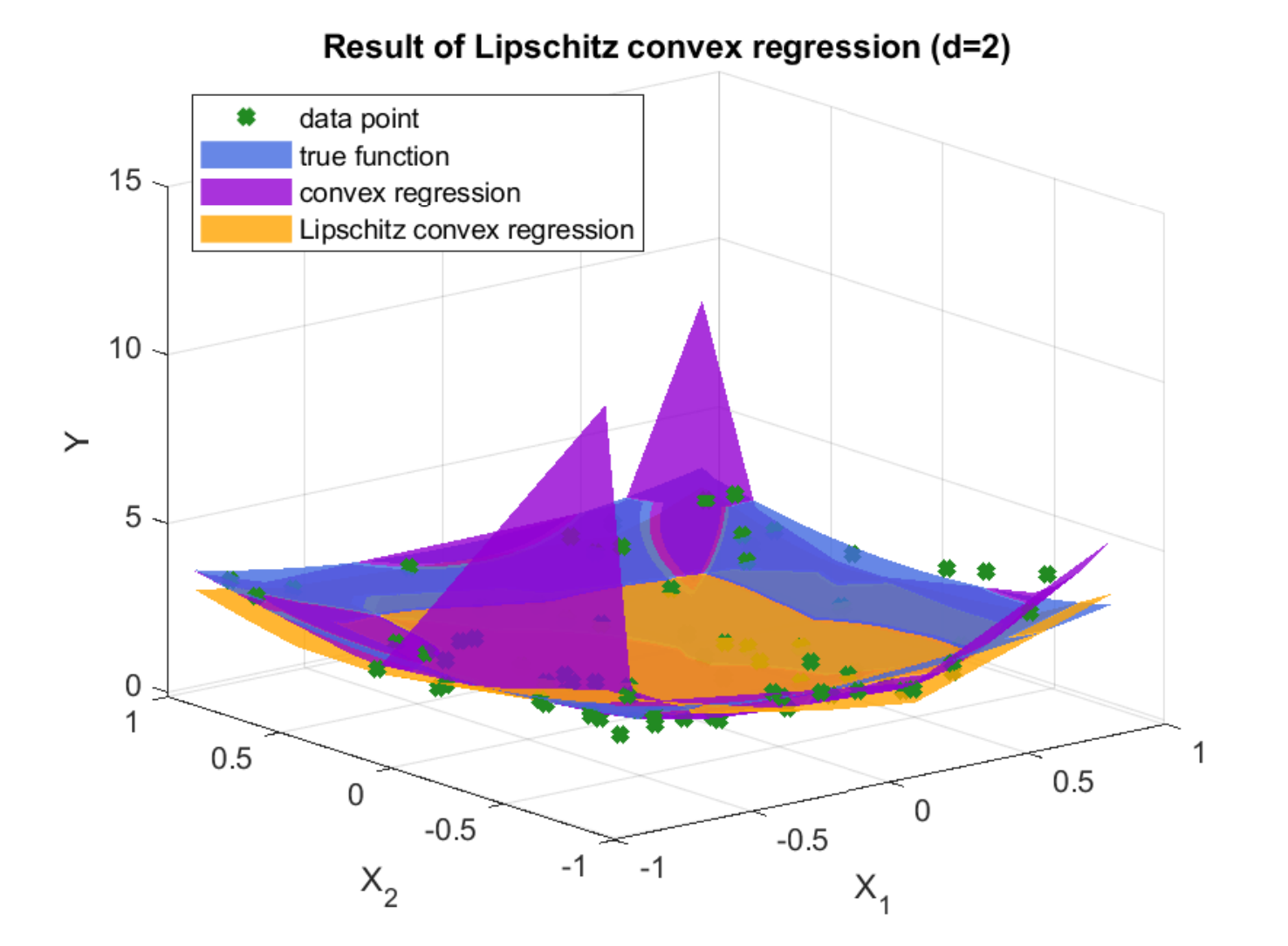}}
	\caption{Result of Lipschitz convex regression with the data-driven Lipschitz estimation method.}\label{fig:data_Lip}
\end{figure}

\section{Real applications}\label{sec:realapplication}
In this section, we apply our mechanism for estimating the multivariate shape-constrained convex functions in some real applications, namely, pricing of European call option, pricing of basket option, prediction of average weekly wages and estimation of production functions.

\subsection{Option pricing of European call option}\label{subsec:calloption}
Consider a European call option whose payoff at maturity $T$ is $ (S_T-K)_+$, where $S_T$ is a random variable that stands for the stock price at $T$, and $K$ is the predetermined strike price. We are interested in the option price at time $t$, which is defined as
\begin{align*}
V(S) := \mathbb{E}[e^{-r(T-t)}(S_T-K)_+\mid S_t=S], \quad S>0,
\end{align*} 
where $r$ is the risk-free interest rate. Under Black-Scholes model, the random variable $S_T$ satisfies
\begin{align*}
\log S_T \sim \mathcal{N}\Big(\log S_t+(r-\frac{1}{2}\sigma^2)(T-t),\sigma^2(T-t)
\Big),
\end{align*}
where $\sigma$ is the volatility. It is well-known that $V(\cdot)$ is a convex function with $0\leq V'(S)\leq 1$ for $S>0$. Therefore, we can use the shape-constrained convex regression model with Property (S2) to approximate the function $V(\cdot)$. 

There are two reasons why we consider this application to demonstrate the numerical performance of our mechanism. One is that $V(\cdot)$ admits a closed-form solution as
\begin{align*}
V(S)=S\Phi(d_1)-Ke^{-r(T-t)}\Phi(d_2),\quad
d_{1,2} = \frac{\log \frac{S}{K}+(r\pm \frac{1}{2}\sigma^2)(T-t)}{\sigma \sqrt{T-t}},
\end{align*}
where $\Phi(\cdot)$ is the cumulative distribution function of the standard normal distribution. The second reason is that the approximation of function $V(\cdot)$ is used in pricing American-type options by approximate dynamic programming, e.g. \cite{longstaff2001valuing}.

In our experiment, we take $t=0.1$, $T=0.4$, $K=10$, $r=0$, $\sigma=0.2$. We sample $200$ data points, denoted as $\{(S_i,V_i)\}_{i=1}^{200}$. For each $S_i$, $\log S_i$ is sampled following the distribution $\mathcal{N}(\log K+(r-\sigma^2/2)t,\sigma^2t)$, and the corresponding $V_i$ is sampled such that $\log V_i$ follows the distribution $\mathcal{N}(\log S_i+(r-\sigma^2/2)(T-t),\sigma^2(T-t))$. For comparison, we apply several regression models to approximate the conditional expectation function $V$: linear regression, least squares linear regression on a set of basis functions (e.g. weighted Laguerre basis in \cite{longstaff2001valuing}), unconstrained convex regression and convex regression with box constraint ($L=0$, $U=1$). 

The comparison among regression models is shown in Figure \ref{toy_Lip}. We can see that the performance of shape-constrained convex regression is the best. The poor performance of linear regression, Laguerre regression and unconstrained convex regression appears near the boundary in three aspects. The first is that the results from linear regression and Laguerre regression take negative values when $S$ is small, which contradicts the fact that $V$ is always non-negative. The second is that the Laguerre regression can not obtain the required convex property. The last is that when $S$ is large, the gradients of the results obtained by Laguerre regression and unconstrained convex regression are too large. To deal with this over-fitting problem, we add the box constraint to the convex regression, which comes from prior knowledge. We can see that the result of shape-constrained convex regression performs better near the boundary, which demonstrate the advantage of the additional shape constraint.

\begin{figure}[H]
	\centering
	\includegraphics[width=0.8\linewidth]{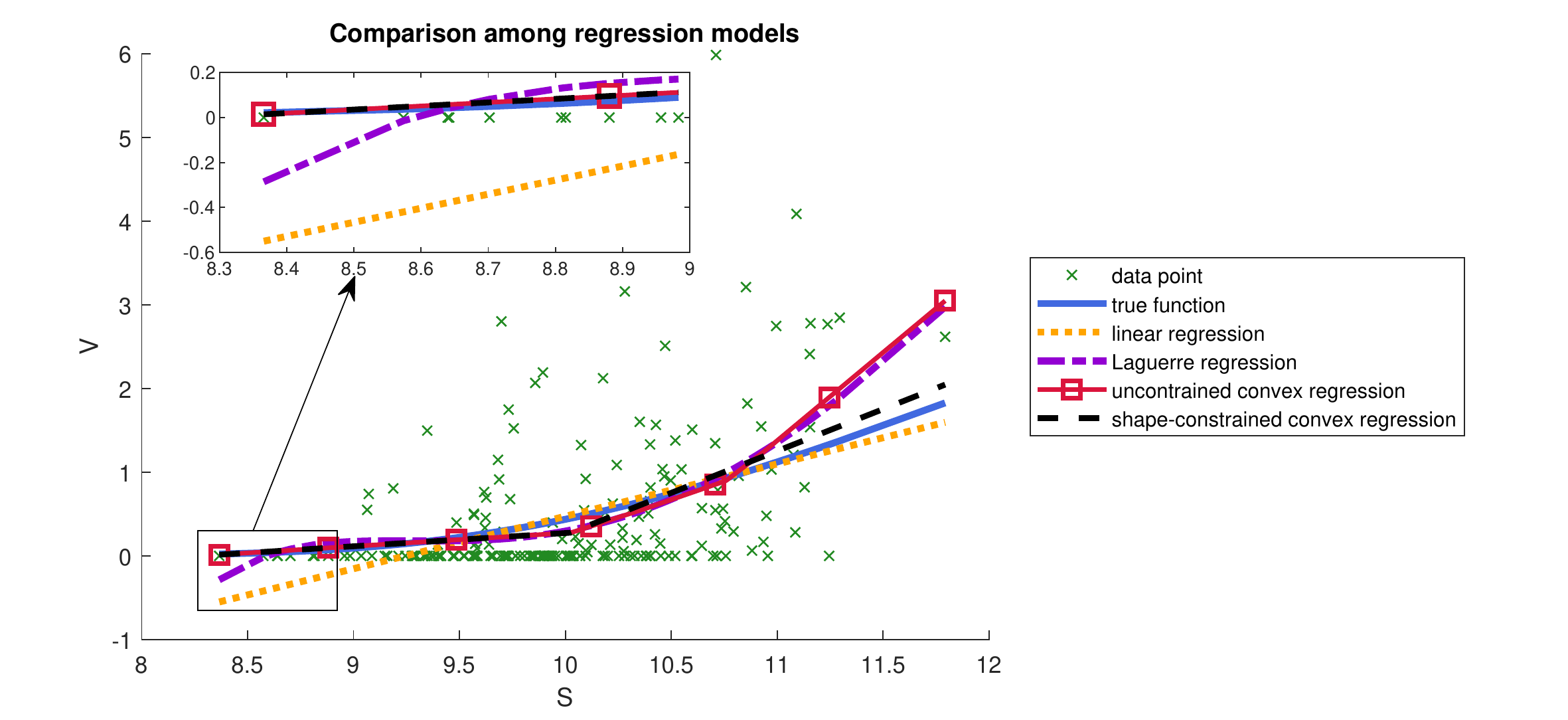}
	\caption{Results of the estimation of the option pricing of European call option.}\label{toy_Lip}
\end{figure}

\subsection{Option pricing of basket option}\label{subsec:basketoption}
To test multivariate convex regression problem, we consider pricing the basket option on weighted average of $M$ underlying assets. 

\paragraph{Basket option of two European call options ($M=2$).}
We first consider a basket option of two European call options, where
\begin{align*}
V(x,y)=\mathbb{E}[e^{-r(T-t)}(w_1 S_T^1+w_2S_T^2-K)_{+}\mid S_t^1=x,S_t^2=y],\quad x,y>0,
\end{align*}
where $w=(w_1,w_2)^T$ is a given weight vector such that $w\geq 0$, $w_1+w_2=1$. The random variables $S_T^1$ and $S_T^2$ satisfy
\begin{align*}
\left(\begin{aligned}
&\log S_T^1\\
&\log S_T^2
\end{aligned}
\right)\sim \mathcal{N}
\left(\left(\begin{aligned}
&\log S_t^1+(r-\sigma_1^2/2)(T-t)\\
&\log S_t^2+(r-\sigma_2^2/2)(T-t)
\end{aligned}
\right),(T-t)\begin{pmatrix}
\sigma_1^2 & \rho\sigma_1\sigma_2\\[0.4em]
\rho\sigma_1\sigma_2 &\sigma_2^2
\end{pmatrix}
\right),
\end{align*}
where $\sigma_1$, $\sigma_2$ are volatilities. One can show that $V(\cdot,\cdot)$ is a convex function with $0\leq \nabla V(x,y)\leq w$, and the proof can be found in the Appendix. We can apply the multivariate shape-constrained convex regression model with Property (S2) ($L=0$, $U=w$) to estimate the function $V$. 

The convex function $V(\cdot,\cdot)$ does not admit a closed-form solution. However $V$ is also the solution of Black-Scholes PDE, which can be solved by the finite difference method. The details of the corresponding convection-diffusion equation and the finite difference method for solving it could be found in the Appendix. We use the solution obtained by the finite difference method as the benchmark. 

In the experiment, we take $r=0$, $\rho=0.1$, $\sigma_1=0.2$, $\sigma_2=0.3$, $K=10$, $t=0$, $T=0.5$, $w_1=w_2=0.5$. We sample $200$ data points, denoted as $\{(S_i,V_i)\}_{i=1}^{200}$, where $S_i$ follows the uniform distribution on the open interval $(0,5K)\times (0,5K)$ and $V_i$ follows the the distribution
\begin{align*}
\mathcal{N}
\left(\log S_i+(T-t)\left(\begin{aligned}
&r-\sigma_1^2/2\\
&r-\sigma_2^2/2
\end{aligned}
\right),(T-t)\begin{pmatrix}
\sigma_1^2 & \rho\sigma_1\sigma_2\\[0.4em]
\rho\sigma_1\sigma_2 &\sigma_2^2
\end{pmatrix}
\right).
\end{align*}
The numerical result is shown in Figure \ref{fig:basket}. For better illustration, we also plot the absolute error and relative error of the results of the unconstrained convex regression and shape-constrained convex regression. As we can see, the shape-constrained convex regression performs much better than unconstrained convex regression, especially near the boundary.

\begin{figure}[H]
	\centering
	\subfigure[]{\label{fig:basket1}
		\includegraphics[width=0.8\linewidth]{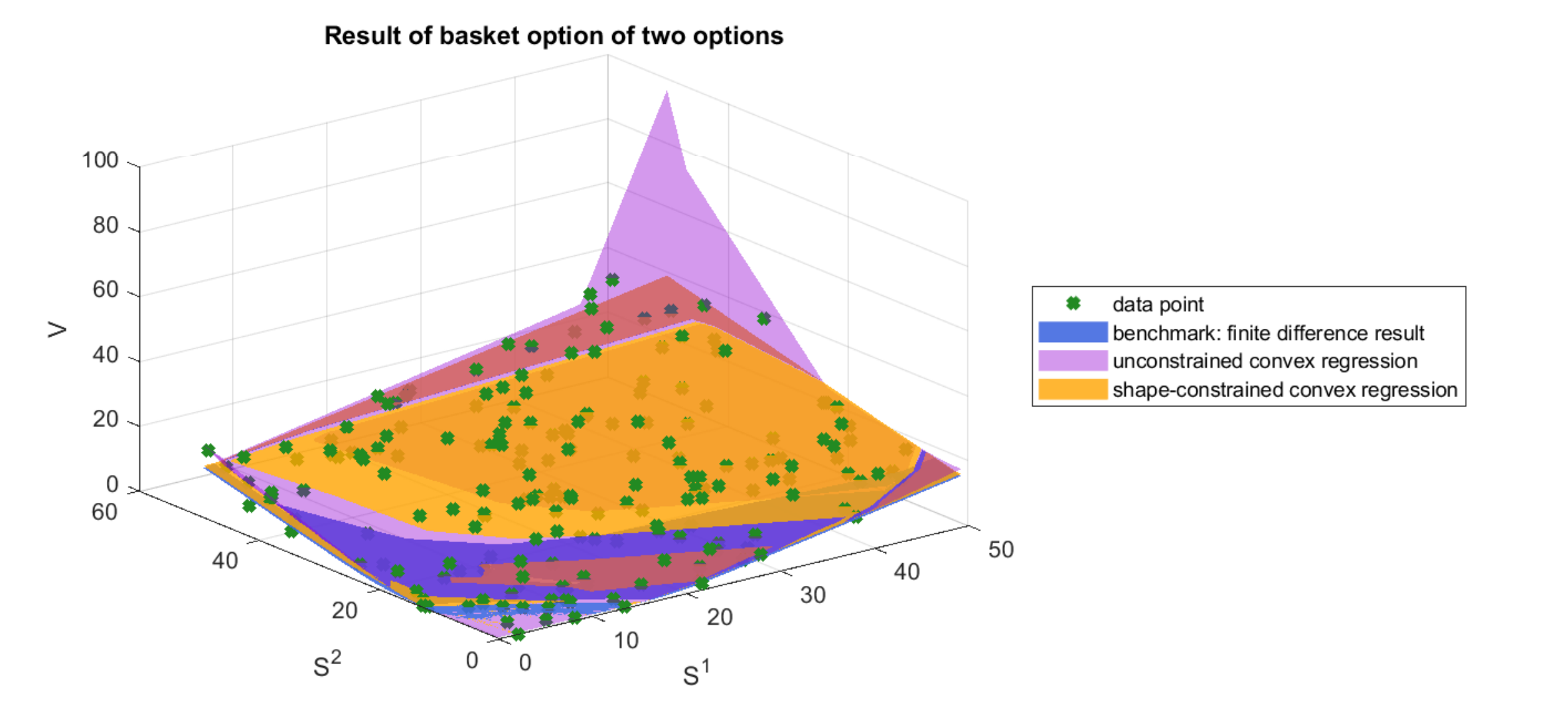}}\\
	\subfigure[]{\label{fig:basket2}
		\includegraphics[width=0.485\linewidth]{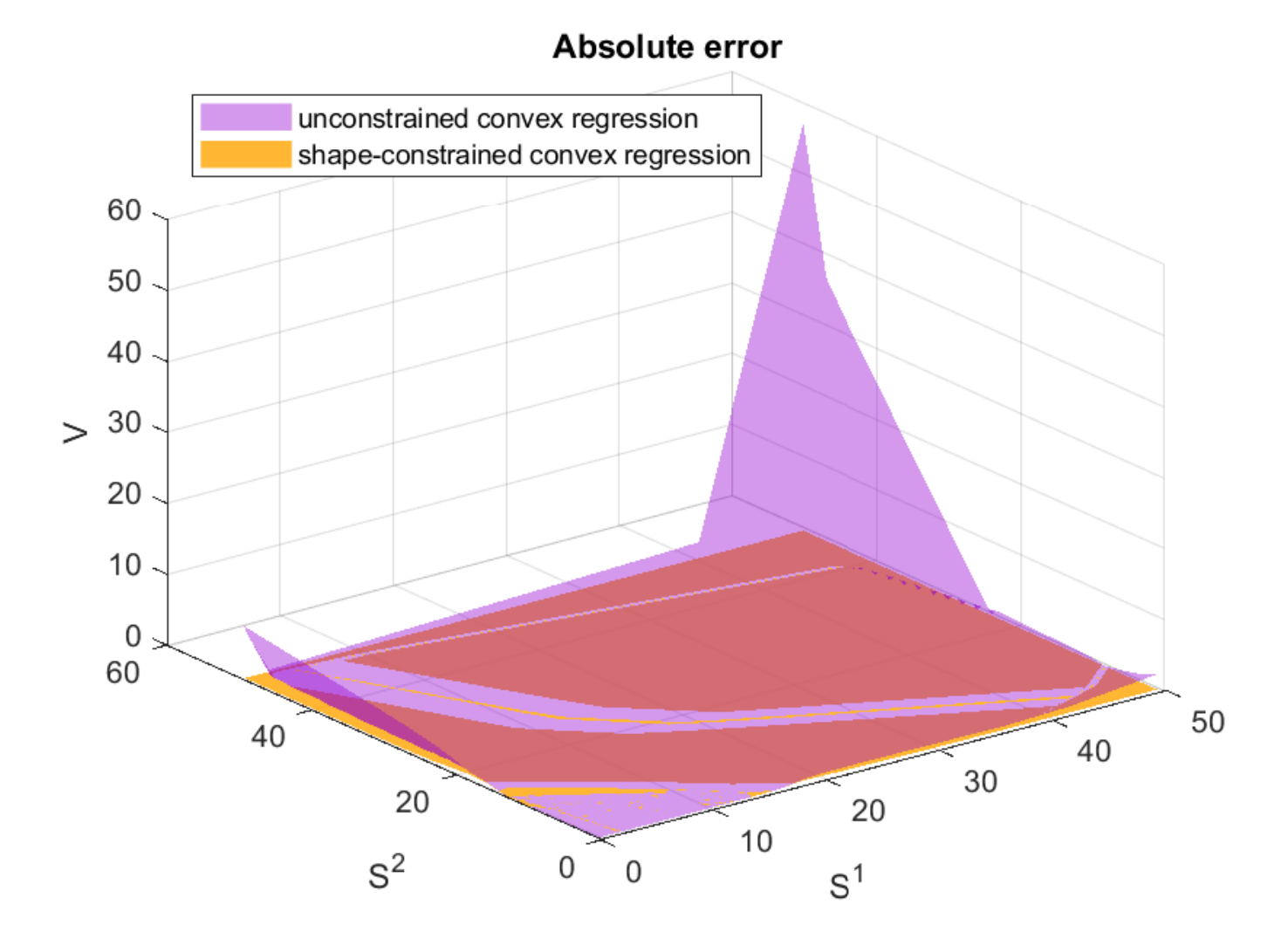}}
	\subfigure[]{\label{fig:basket3}
	    \includegraphics[width=0.485\linewidth]{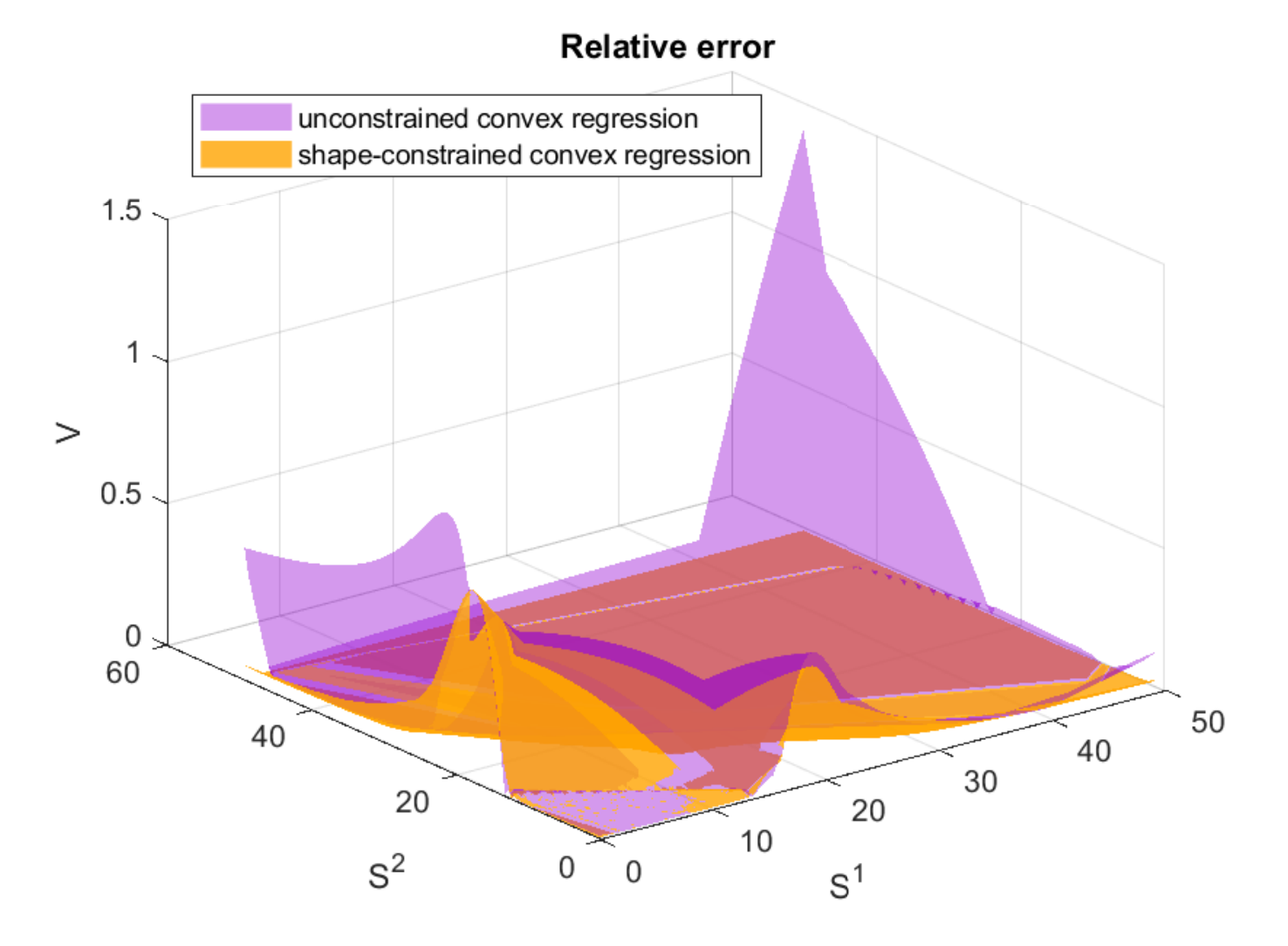}}
	\caption{Result of the estimation of the option pricing of basket option ($M=2$).}\label{fig:basket}
\end{figure}

\paragraph{Basket option of more underlying assets ($M>2$).}
The basket option in practice always contains many underlying assets, possibly greater than two. The finite difference method is very time-consuming when solving the 
$3$-dimensional convection-diffusion equation, and even impossible to be applied to the higher dimensional cases due to the curse of dimensionality. For $M>2$, researchers tend to apply the Monte Carlo simulation to estimate the convex function associated with the basket option. Therefore, we treat the solution obtained by the Monte Carlo simulation as the benchmark. 

To demonstrate the performance of the shape-constrained convex regression, we design the experiments for estimating the basket option for $M=5$ and $M=10$. That is, we consider a basket option of $M$ European call options, which is defined as
\begin{align*}
V(x_1,\cdots,x_M)=\mathbb{E}[e^{-r(T-t)}(w_1 S_T^1+\cdots+w_MS_T^M-K)_{+}\mid S_t^1=x_1,\cdots,S_t^M=x_M],\quad x_1,\cdots,x_M>0,
\end{align*}
where $w=(w_1,\cdots,w_M)^T$ is a given weight vector such that $w\geq 0$, $w_1+\cdots+w_M=1$. The random variables $S_T^1,\cdots,S_T^M$ satisfy 
\begin{align*}
\begin{pmatrix}
\log S_T^1 \\[0.4em]
\vdots \\[0.4em]
\log S_T^M
\end{pmatrix}\sim \mathcal{N}
\left(\begin{pmatrix}
\log S_t^1+(r-\sigma_1^2/2)(T-t)\\[0.4em]
\vdots \\[0.4em]
\log S_t^M+(r-\sigma_M^2/2)(T-t)
\end{pmatrix},(T-t)\begin{pmatrix}
\sigma_1^2 & \cdots & \rho\sigma_1\sigma_M\\[0.4em]
\vdots & \ddots & \vdots\\[0.4em]
\rho\sigma_1\sigma_M& \cdots  &\sigma_M^2
\end{pmatrix}
\right),
\end{align*}
where $\sigma_1,\cdots,\sigma_M$ are volatilities. Then $V$ is a convex function with $0\leq \nabla V\leq w$. We apply the multivariate shape-constrained convex regression model with Property (S2) ($L=0$, $U=w$) to estimate the function $V$. 

In the experiment, we set $r=0$, $\rho=0.1$, $K=10$, $t=0$, $T=0.5$, $w_i=1/M$, $\sigma_i=0.2+0.025(i-1)$, $i=1,\cdots,M$. We sample $N$ data points as the case for $M=2$. To illustrate the performance of our procedure, we uniformly generate $1000$ test points in the range $(0,5K)^M$. At each test point, we use the Monte Carlo simulation with $10^5$ samples to compute the ``true" function value. We summarize the results of $M=5$ and $M=10$ in Table \ref{table_M5} and Table \ref{table_M10}, respectively. In the tables, ``UC" represents the unconstrained convex regression, ``SC" represents the shape-constrained convex regression, and ``MSE" represents the mean squared error. As one can see, the shape-constrained convex regression takes longer time to be solved than the unconstrained convex regression, but get a much better estimated result.

\begin{minipage}[b]{0.45\linewidth}\centering
{\small
	\begin{ThreePartTable}
		\begin{TableNotes}
			\item[\bfseries {\footnotesize Note:} ]  {\footnotesize MSE represents the mean squared error.}
		\end{TableNotes}
		\renewcommand{\multirowsetup}{\centering}
		\begin{longtable}{cccc}
			\caption{Estimation of basket option with $M=5$.} \label{table_M5}\\[-6pt]
			\hline  
			\specialrule{0.01em}{1pt}{1pt}
			Model & Num. of data & MSE &  Time \\
			\specialrule{0.01em}{0pt}{2pt}
			\endfirsthead
			
			\multicolumn{4}{c}{{ \tablename\ \thetable{} -- continued from previous page}} \\
			
			\specialrule{0.01em}{1pt}{1pt}
			Model & Num. of data & MSE &  Time \\
			\specialrule{0.01em}{0pt}{2pt}
			\endhead
			
			\hline
			\multicolumn{4}{r}{{Continued on next page}} \\
			\hline \hline
			\endfoot
			
			\specialrule{0.01em}{0pt}{1pt} 
			\hline
			\insertTableNotes
			\endlastfoot	
			
			\multirow{3}*{\tabincell{c}{\tt UC}} 
			& 200 & 1.35e+2 & 00:00:05 \\
			& 400 & 2.06e+1 & 00:00:38 \\
			& 600 & 7.56e+1 & 00:01:18 \\
			\specialrule{0.01em}{1pt}{1pt}
			\multirow{3}*{\tabincell{c}{\tt SC}} 
			& 200 & 4.34e-1 & 00:00:24 \\
			& 400 & 3.97e-1 & 00:01:36 \\
			& 600 & 5.83e-1 & 00:02:34 \\
			\hline
			\specialrule{0.01em}{0.5pt}{1pt}
		\end{longtable}
	\end{ThreePartTable}
}	
\end{minipage}
\hspace{0.5cm}
\begin{minipage}[b]{0.45\linewidth}\centering
{\small
	\begin{ThreePartTable}
		\begin{TableNotes}
			\item[\bfseries {\footnotesize Note:} ]  {\footnotesize MSE represents the mean squared error.}
		\end{TableNotes}
		\renewcommand{\multirowsetup}{\centering}
		\begin{longtable}{cccc}
			\caption{Estimation of basket option with $M=10$.} \label{table_M10}\\[-6pt]
			\hline  
			\specialrule{0.01em}{1pt}{1pt}
			Model & Num. of data & MSE &  Time \\
			\specialrule{0.01em}{0pt}{2pt}
			\endfirsthead
			
			\multicolumn{4}{c}{{ \tablename\ \thetable{} -- continued from previous page}} \\
			
			\specialrule{0.01em}{1pt}{1pt}
			Model & Num. of data & MSE &  Time \\
			\specialrule{0.01em}{0pt}{2pt}
			\endhead
			
			\hline
			\multicolumn{4}{r}{{Continued on next page}} \\
			\hline \hline
			\endfoot
			
			\specialrule{0.01em}{0pt}{1pt} 
			\hline
			\insertTableNotes
			\endlastfoot	
			
			\multirow{3}*{\tabincell{c}{\tt UC}} 
			& 200 & 4.60e+2 & 00:00:08 \\
			& 400 & 9.36e+1 & 00:00:10 \\
			& 600 & 4.29e+1 & 00:00:28 \\
			\specialrule{0.01em}{1pt}{1pt}
			\multirow{3}*{\tabincell{c}{\tt SC}} 
			& 200 & 2.91e+0 & 00:00:29 \\
			& 400 & 1.45e+0 & 00:01:04 \\
			& 600 & 1.49e+0 & 00:03:52 \\
			\hline
			\specialrule{0.01em}{0.5pt}{1pt}
		\end{longtable}
	\end{ThreePartTable}
}	
\end{minipage}
	
\subsection{Prediction of average weekly wages}\label{subsec:weeklywages}
We consider the problem of estimating the average weekly wages based on years of education and experience as given in \cite[Chapter 10]{ramsey2012statistical}. This dataset is from 1988 March U.S. Current Population Survey, which can be downloaded as ex1029 in the R package Sleuth2. The set contains weekly wages in 1987 for a sample of 25632 males between the age of 18 and 70 who worked full-time, with their years of education and years of experience. After averaging over a grid with cell size of $1$ year by $1$ year and ignoring the outliers, we finally come to a dataset with $857$ samples. 

A reasonable assumption for this application is that the wages are concave in years of experience and a transformation of years of education, i.e., $1.2^{\mbox{years of education}}$, according to \cite{hannah2013multivariate}.
The estimated result is shown in Figure \ref{fig:ex1029}. The shape-constrained convex regression problem is solved within $1$ minute.

\begin{figure}[H]
	\subfigure[Estimated function values at each $X_i$.]{\label{fig:ex1}
		\includegraphics[width=0.485\linewidth]{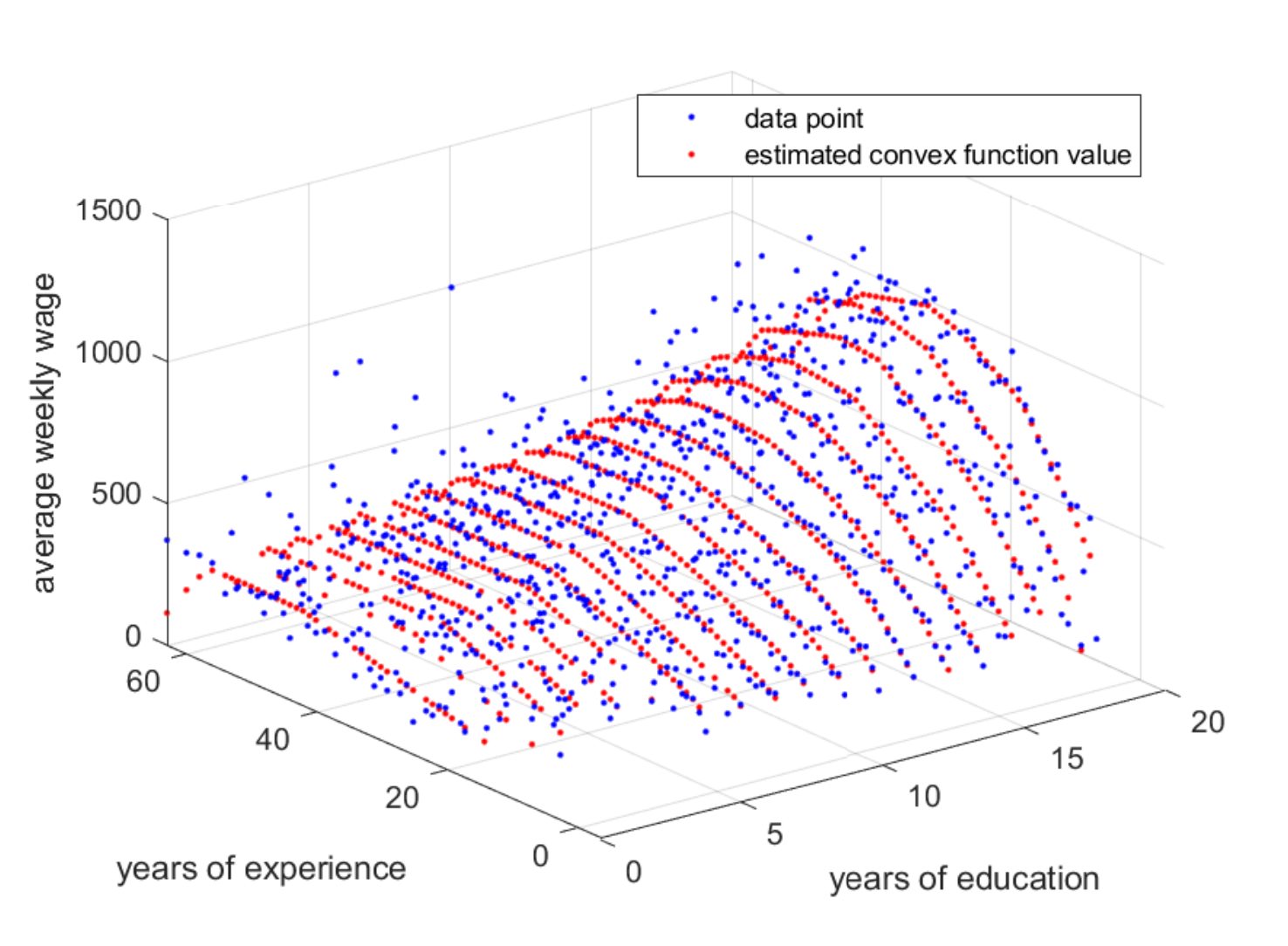}}
	\subfigure[Visualization of the function.]{\label{fig:ex2}
		\includegraphics[width=0.485\linewidth]{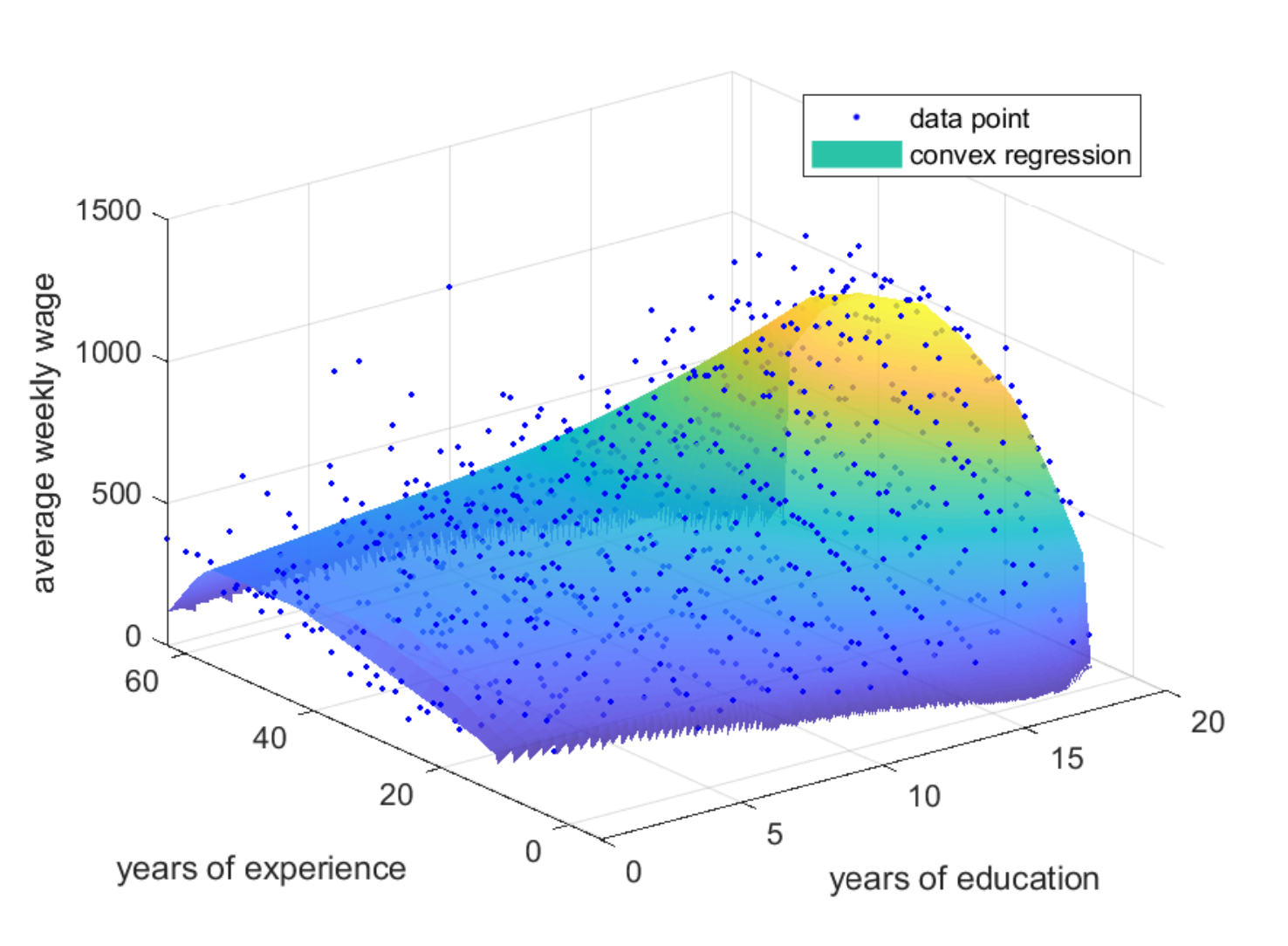}}
	\caption{Results of the estimation of average weekly wages.}\label{fig:ex1029}
\end{figure}

\subsection{Estimation of production functions}\label{subsec:productionfun}
In economics, a production function gives the technological relation between quantities of inputs and quantities of output of goods. Production functions are known to be concave and non-decreasing \cite{hanoch1972testing,varian1984nonparametric,yagi2018shape}. We apply our framework to estimate the production function for the plastic industry (CIIU3 industry code: 2520) in the year 2011. The dataset can be downloaded from the website of Chile's National Institute of Statistics ({\tt https://www.ine.cl/estadisticas/economicas/manufactura}). As in the setting in \cite{yagi2018shape}, we use labor and capital as the input variables, and value added as the output variable. In the dataset, labor is measured as the total man-hours per year, capital and value added are measured in millions of Chilean peso. After removing some outliers, the dataset contains 250 samples. The numerical results can be found in Figure \ref{fig:prod1}. The shape-constrained convex regression problem is solved within $3$ seconds.

\begin{figure}[H]
	\subfigure[Correlogram of the function.]{\label{fig:prod4_3}
		\includegraphics[width=0.5\linewidth]{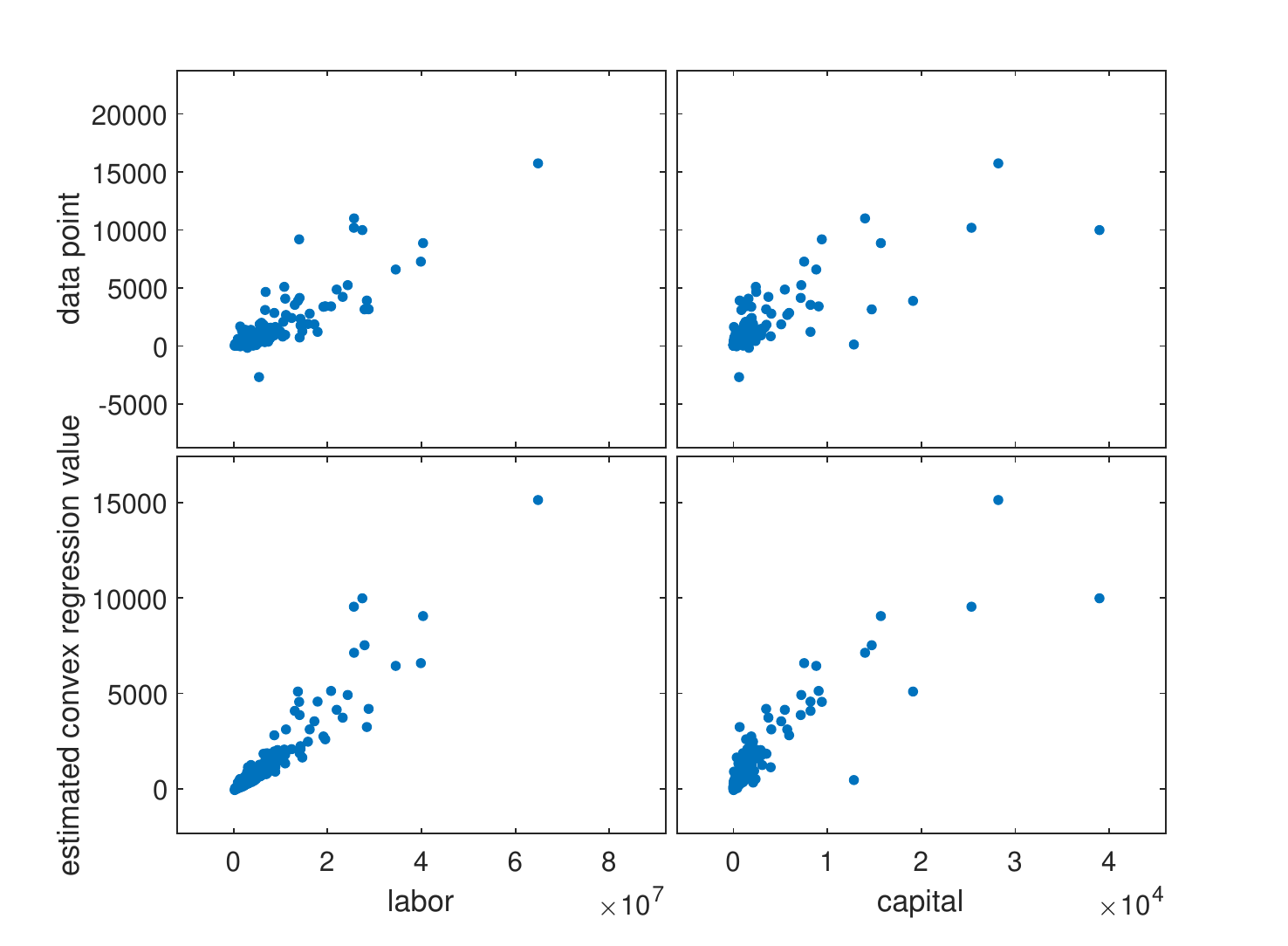}}
	\subfigure[Visualization of the function.]{\label{fig:prod4_4}
		\includegraphics[width=0.5\linewidth]{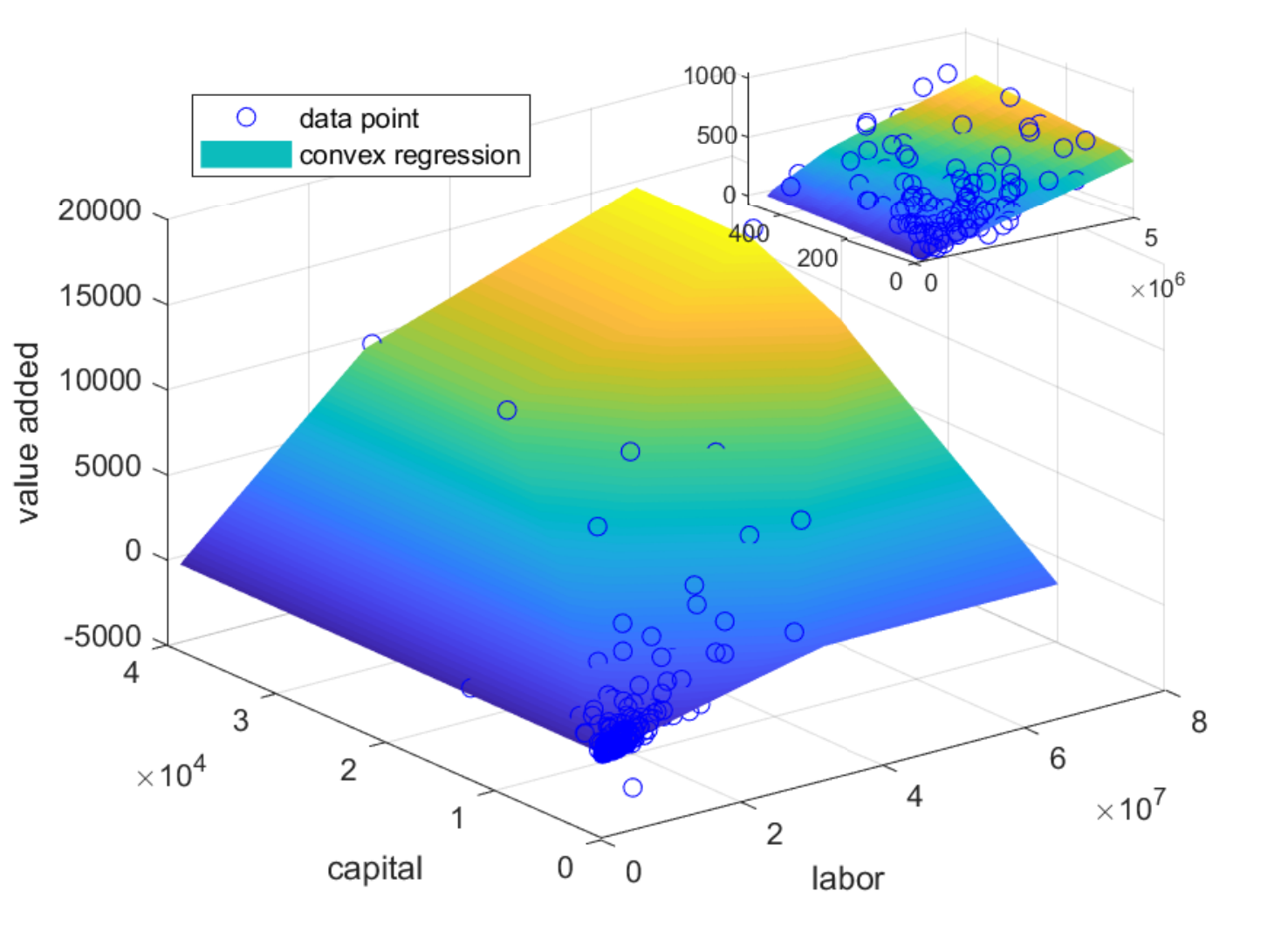}}	
	\caption{Result of estimation of production function of plastic in Chile.}\label{fig:prod1}
\end{figure}

Another example is to explain the labour demand of 569 Belgian firms for the year 1996. The dataset can be obtained from \cite{verbeek2008guide} ({\tt https://www.wiley.com/legacy/wileychi/verbeek2ed/datasets.html}). The dataset includes the total number of employees (labour), their average wage (wage), the amount of capital (capital) and a measure of output (value added). The labour is measured as the number of workers, the wage is measured in units of 1000 euro, and the capital and value added is measured in units of a million euro. After removing the outliers, the dataset contains 562 samples. The result can be found in Figure \ref{fig:prod2}. The problem is solved in $22$ seconds.

\begin{figure}[H]
	\subfigure[Correlogram of the function.]{\label{fig:prod9_4}
		\includegraphics[width=0.5\linewidth]{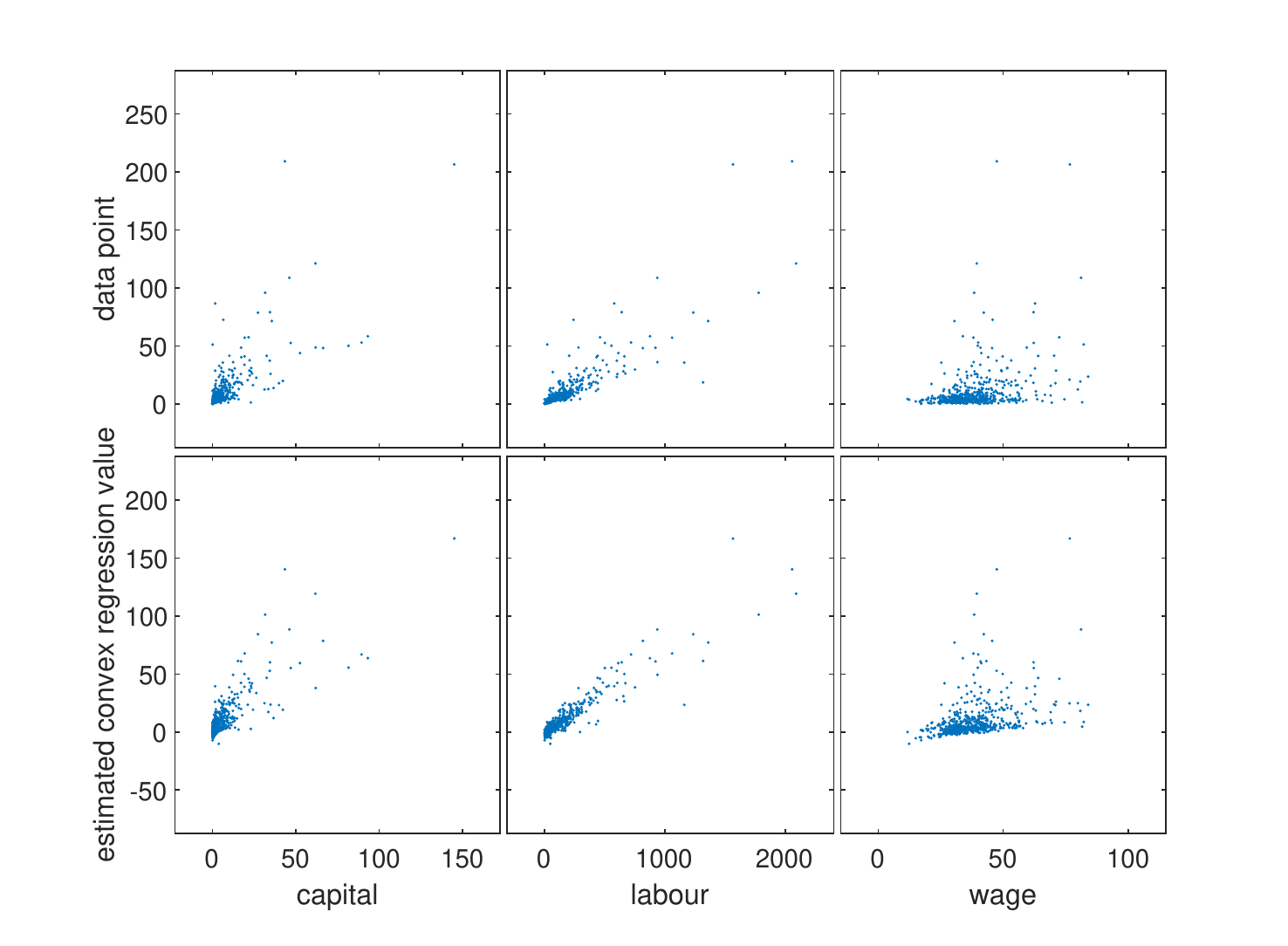}}
	\subfigure[Visualization of the function.]{\label{fig:prod9_5}
		\includegraphics[width=0.5\linewidth]{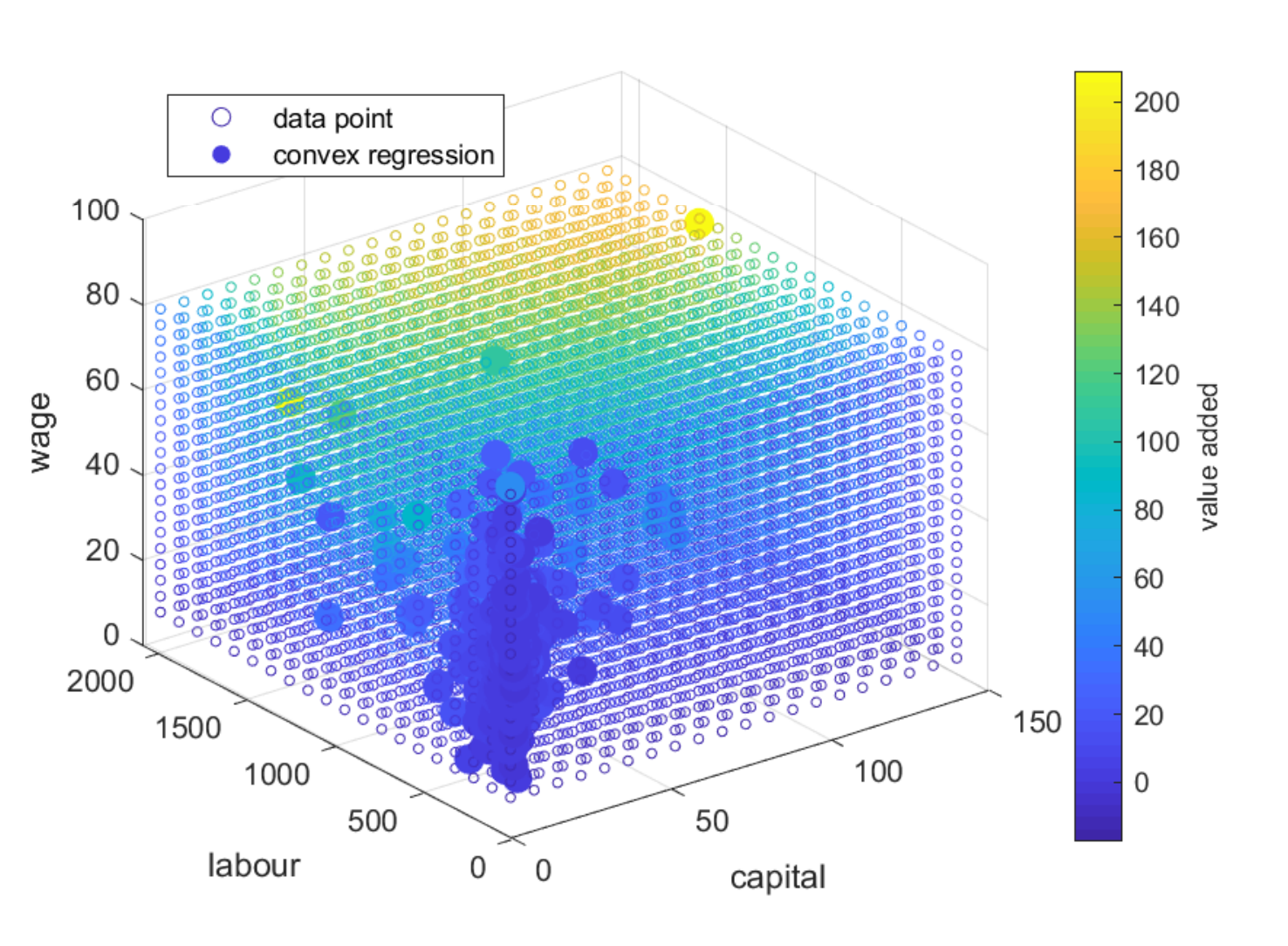}}	
	\caption{Result of estimation of production function of Belgian firms.}\label{fig:prod2}
\end{figure}

\section{Conclusion and future work}\label{sec:futurework}
In this paper, we provide a comprehensive mechanism for computing a least squares estimator for the multivariate shape-constrained convex regression function.
In addition, we propose two efficient algorithms, the symmetric Gauss-Seidel based alternating direction method of multipliers, and the proximal augmented Lagrangian method, to solve the large-scale constrained QP in the mechanism. We conduct the extensive numerical experiments to demonstrate the efficiency and robustness of our proposed algorithms. In future, we may extend the idea to estimate a continuously differentiable function with a Lipschitz derivative, which is known to be the difference of a convex function and a convex quadratic function on a compact convex set \cite{zlobec2008fundamental}.

\section*{Acknowledgements}
The authors would like to thank Professor Necdet S. Aybat for helpful clarifications on his work in \cite{aybat2014parallel}.

\appendix
\section*{Appendices}
\section{Derivation of the proximal mapping and generalized Jacobian associated with $\mathcal{D}=\{x\in \mathbb{R}^d\mid \|x\|_{1}\leq L\}$}
For $x\in \mathbb{R}^d$, let $P_x={\rm diag}({\rm sign}(x))\in \mathbb{R}^{d\times d}$, then
\begin{align*}
\Pi_\mathcal{D}(x) &= \arg\min_{y\in\mathbb{R}^d} \Big\{\frac{1}{2}\|y-x\|^2\mid \|y\|_1\leq L\Big\}\\
&= LP_x \Big(\arg\min_{y\in\mathbb{R}^d} \Big\{\frac{1}{2}\|y-P_x x/L\|^2\mid e_d^T y \leq 1,y\geq 0\Big\}\Big)\\
&=\left\{\begin{aligned}
&x && \mbox{if}\ \|x\|_1\leq L, \\
&L P_x \Pi_{\Delta_d}(P_x x/L)&& \mbox{otherwise,}
\end{aligned}\right.
\end{align*}
where the simplex $\Delta_d=\{x\in \mathbb{R}^d\mid e_d^T x=1,x\geq 0\}$. To derive the generalized Jacobian of $\Pi_\mathcal{D}(\cdot)$, we need the generalized Jacobian of $\Pi_{\Delta_d}(\cdot)$. Following the idea in \cite{han1997newton,li2017efficient}, we can explicitly compute an element of the generalized Jacobian of $\Pi_{\Delta_d}(\cdot)$ at $P_x x/L$. Let $K$ be the set of index $i$ such that $(\Pi_{\Delta}(P_x x/L))_i=0$. Then
\begin{align*}
\widetilde{H}=I_d-\begin{bmatrix}
I_K^T & e_d
\end{bmatrix}\Big(
\begin{bmatrix}
I_K \\[0.4em]
e_d^T
\end{bmatrix}
\begin{bmatrix}
I_K^T & e_d
\end{bmatrix}
\Big)^{\dagger}\begin{bmatrix}
I_K \\[0.4em]
e_d^T
\end{bmatrix}
\end{align*}
is an element in $\partial \Pi_{\Delta_d}(P_x x/L)$, where $I_K$ means the matrix consisting of the rows of the identity matrix $I_d$, indexed by $K$. After some algebraic computation, we can see
\begin{align*}
\widetilde{H}&=I_d-\begin{bmatrix}
I_K^T & e_d
\end{bmatrix}
\begin{bmatrix}
I_{|K|}+\frac{1}{n-|K|}e_{|K|} e_{|K|}^T & -\frac{1}{n-|K|}e_{|K|} \\[0.4em] 
-\frac{1}{n-|K|}e_{|K|}^T & \frac{1}{n-|K|}
\end{bmatrix}
\begin{bmatrix}
I_K \\[0.4em] 
e_d^T
\end{bmatrix}\\
&={\rm Diag}(r)-\frac{1}{{\rm nnz}(r)}rr^T,
\end{align*}
where $r\in \mathbb{R}^d$ is defined as $r_i=1$ if $(\Pi_{\Delta}(P_x x/L))_i\neq 0 $ and $r_i=0$ otherwise. Therefore,
\begin{align*}
H\in \partial \Pi_\mathcal{D}(x),\quad \mbox{where } H=\left\{\begin{aligned}
&I_d && \mbox{if}\ \|x\|_1\leq L, \\
&P_x \widetilde{H}  P_x && \mbox{otherwise}.
\end{aligned}\right.
\end{align*}

\section{Property of basket option of two European call options}
The function $V(x,y)$ is differentiable since it is the solution of the Black-Scholes PDE. By the definition of $V$, we can see that $V$ is non-decreasing in $x$ and $y$, which means that $\nabla V(x,y)\geq 0$. According to the distribution of $S_T^1$ and $S_T^2$, we have that 
\begin{align*}
V(x,y) = e^{-r(T-t)} \mathbb{E}_z f(x,y,z),
\end{align*}
where 
\begin{align*}
f(x,y,z)&=(xw_1 e^{(r-\sigma_1^2/2)(T-t)+\sqrt{T-t}z_1}+yw_2 e^{(r-\sigma_2^2/2)(T-t)+\sqrt{T-t}z_2}-K)_+,\\
\begin{pmatrix}
z_1\\[0.4em]
z_2
\end{pmatrix}
&\sim \mathcal{N}(0,\begin{pmatrix}
\sigma_1^2 & \rho\sigma_1\sigma_2\\[0.4em]
\rho\sigma_1\sigma_2 &\sigma_2^2
\end{pmatrix}).
\end{align*}
For any $x_1,x_2,y\in\mathbb{R}$, we can see that 
\begin{align*}
|V(x_1,y)-V(x_2,y)|&=e^{-r(T-t)} \Big| \mathbb{E}_z [f(x_1,y,z)-f(x_1,y,z)]\Big|\\
&\leq e^{-r(T-t)} \mathbb{E}_z |f(x_1,y,z)-f(x_1,y,z)|\\
&\leq e^{-r(T-t)} \mathbb{E}_z  [w_1e^{(r-\sigma_1^2/2)(T-t)+\sqrt{T-t}z_1} |x_1-x_2|]\\
&=w_1|x_1-x_2|e^{-\sigma_1^2/2(T-t)}\mathbb{E}_z [e^{\sqrt{T-t}z_1}]\\
&=w_1|x_1-x_2|.
\end{align*}
Similarly, we can prove that for any $x,y_1,y_2\in\mathbb{R}$,
\begin{align*}
|V(x,y_1)-V(x,y_2)|\leq w_2|y_1-y_2|.
\end{align*}
Therefore, we have that fact that $0\leq \nabla V(x,y)\leq w$ for any $x,y$.

\section{Finite difference method for estimating the basket option of two European call options}
It is well-known that the function $V(x,y)=U(0,x,y)$, where $U$ satisfies the Black-Scholes PDE
\begin{align*}
\left\{\begin{aligned}
&\frac{\partial U}{\partial t}+rx\frac{\partial U}{\partial x}+ry\frac{\partial U}{\partial y}+\frac{1}{2}\sigma_1^2x^2\frac{\partial^2 U}{\partial^2 x^2}+\rho \sigma_1\sigma_2xy\frac{\partial^2 U}{\partial xy}+\frac{1}{2}\sigma_2^2y^2\frac{\partial^2 U}{\partial^2 y^2}-rU=0,\\
&U(T,x,y)=(w_1x+w_2y-K)^+.
\end{aligned}
\right.
\end{align*}
Let $\tau = T-t$, $u(\tau,x,y)=U(t,x,y)$, then $u$ satisfies
\begin{align*}
\left\{\begin{aligned}
&\frac{\partial u}{\partial \tau}-rx\frac{\partial u}{\partial x}-ry\frac{\partial u}{\partial y}-\frac{1}{2}\sigma_1^2x^2\frac{\partial^2 u}{\partial^2 x^2}-\rho \sigma_1\sigma_2xy\frac{\partial^2 u}{\partial xy}-\frac{1}{2}\sigma_2^2y^2\frac{\partial^2 u}{\partial^2 y^2}+ru=0,\\
&u(0,x,y)=(w_1x+w_2y-K)^+.
\end{aligned}
\right.
\end{align*}
The above convection-diffusion equation can be solved numerically on a bounded region $(0,x_{\max})\times (0,y_{\max})$ by the standard finite difference method with the artificial boundary conditions
\begin{align*}
\left\{\begin{aligned}
&u(\tau,x,0)=c(w_1x,K,r,\tau,\sigma_1),\\
&u(\tau,0,y)=c(w_2y,K,r,\tau,\sigma_2),\\
&\frac{\partial }{\partial x}u(\tau,x_{\max},y)=w_1,\\
&\frac{\partial }{\partial y}u(\tau,x,y_{\max})=w_2,\\
\end{aligned}
\right.
\end{align*}
where
\begin{align*}
c(x,K,r,\tau,\sigma)=x\Phi(d_1)-Ke^{-r\tau}\Phi(d_2),\quad
d_{1,2} = \frac{\log \frac{x}{K}+(r\pm \frac{1}{2}\sigma^2)\tau}{\sigma \sqrt{\tau}},
\end{align*}
and $\Phi(\cdot)$ is the cumulative distribution function of the standard normal distribution.
\end{document}